\newtheorem{theorem}{Theorem}[section]
\newtheorem{assumption}[theorem]{Assumption}
\newtheorem{corollary}[theorem]{Corollary}
\newtheorem{definition}[theorem]{Definition}
\newtheorem{lemma}[theorem]{Lemma}
\newtheorem{proposition}[theorem]{Proposition}
\DeclareMathOperator\ad{ad}
\DeclareMathOperator\Aut{Aut}
\DeclareMathOperator\End{End}
\DeclareMathOperator\Irr{Irr}
\DeclareMathOperator\Spec{Spec}
\DeclareMathOperator\Sym{Sym}
\newcommand\la{\langle}
\newcommand\normal{\triangleleft}
\newcommand\ra{\rangle}
\newcommand\size[1]{\lvert #1\rvert}
\newcommand\tensor{\otimes}
\newcommand\CC{\mathbb{C}}
\newcommand\kk{\mathbf{k}}
\newcommand\NN{\mathbb{N}}
\newcommand\QQ{\mathbb{Q}}
\newcommand\RR{\mathbb{R}}
\newcommand\ZZ{\mathbb{Z}}
\newcommand\op{\mathrm}
\renewcommand\cal{\mathcal}
\renewcommand\frak{\mathfrak}
\newcommand\al{\alpha}
\newcommand\bt{\beta}
\newcommand\gm{\gamma}		
\newcommand\dl{\delta}
\newcommand\lm{\lambda}	
\newcommand\sg{\sigma}
\begin{document}
\abovedisplayskip=0.5em
\belowdisplayskip=0.5em

\title{Universal Axial Algebras and a Theorem of Sakuma}
\author{J. I. Hall, F. Rehren, S. Shpectorov}
\date{\today}
\maketitle

\begin{center}
    {\em Dedicated to the  memory of  \'Akos Seress}
\end{center}

\begin{abstract}
	In the first half of this paper,
	we define axial algebras:
	nonassociative commutative algebras generated by axes,
	that is, semisimple idempotents---%
	the prototypical example of which is Griess' algebra \cite{conway}
	for the Monster group.
	When multiplication of eigenspaces of axes is controlled by fusion rules,
	the structure of the axial algebra is determined to a large degree.
	We give a construction of the universal Frobenius axial algebra on $n$ generators
	with a specified fusion rules,
	of which all $n$-generated Frobenius axial algebras with the same fusion rules
	are quotients.
	In the second half,
	we realise this construction in the Majorana / Ising / $\op{Vir}(4,3)$-case
	on $2$ generators,
	and deduce a result generalising Sakuma's theorem in VOAs \cite{sakuma}.
\end{abstract}

\section{Introduction}
\label{sec-intro}
Very often after an important theorem is first proven,
there is fruitful work in refining the hypothesis and techniques
until we feel the theorem has found its proper place.
In this case, our point of departure
is an observation of Miyamoto \cite{miyamoto-s3}
and a theorem of Sakuma \cite{sakuma}
in the case of moonshine-type Vertex Operator Algebras.
In general, a Vertex Operator Algebra (VOA) is a direct sum
of infinitely many finite-dimensional vector spaces,
graded by weight,
together with a somewhat subtle product structure.
A moonshine-type VOA $V$
has a finite-dimensional vector space $V_2$ at weight $2$
which admits a nonassociative commutative algebra structure
and an associative bilinear form.
Miyamoto showed that certain semisimple idempotents in $V_2$,
analogous to our axes,
induce finite automorphisms on the entire VOA.
Moreover, the axes are locally identifiable
and carry a representation of a Virasoro algebra \cite{miyamoto}.
Sakuma then showed that the product of two such automorphisms
has finite order bounded above by $6$.
This insight is an evolution of the construction of the Monster group,
the largest sporadic simple group;
it was first realised as the automorphism group of the Griess algebra $\cal G$
by Griess \cite{griess},
and then as the automorphism group of the moonshine VOA $V^\natural$,
whose weight-$2$ component is indeed the Griess algebra $\cal G$,
in the pioneering works of Borcherds \cite{borcherds}
and Frenkel, Lepowsky and Meurman \cite{flm}.
In fact, $\cal G$ is an axial algebra in our sense.
An early analysis of the axes in $\cal G$
was carried out by Norton and recorded, {\em inter alia},
by Conway in \cite{conway}.

In 2009, Ivanov axiomatised a new class of algebras \cite{ivanov},
called Majorana algebras,
with strikingly similar behaviour to the weight-$2$ component
of moonshine-type VOAs.
In particular, Sakuma's theorem still holds
and moreover the isomorphism type of an algebra generated by two axes
is restricted by the order of the product of their automorphisms \cite{ipss}.
Most spectacularly,
this captures the Griess algebra
in an entire family of algebras.
This has opened a new horizon on studying the Monster,
and other groups,
via a new kind of representation.

We propose a larger class of algebras as a natural object of study.
These `axial algebras' can be seen as a generalisation of commutative associative algebras,
as well as capturing Majorana algebras,
the algebras occurring at weight-$2$ in moonshine-type VOAs,
some Jordan algebras,
and perhaps other algebras relating to mathematical physics.

In this article, we study axial algebras in some detail.
We now give an overview of the work presented here.
The extent to which our algebras generalise associativity 
is controlled by {\em fusion rules},
which we define in Section \ref{sec-fus}.
If structure constants describe the multiplication of elements in an algebra,
fusion rules can describe the multiplication of submodules of an algebra.
A particularly rich source of fusion rules $\frak V(p,q)$ is
the `discrete series $\op{Vir}(p,q)$ of Virasoro algebras',
and we also discuss this case;
it happens to arise in many situations of mathematical physics,
such as the Ising and $n$-state Potts models of statistical mechanics \cite{cft},
and in particular in VOAs.

A semisimple element $a$ of an algebra
affords a decomposition of the algebra into eigenspaces.
If these eigenspaces obey the fusion rules $\frak F$,
and furthermore $a$ is an idempotent,
we call $a$ an {\em $\frak F$-axis}.
If the algebra is associative,
then $a$ can only have eigenvalues $0$ and $1$,
and the fusion rules are likewise very restricted.
The nonassociative case is much richer.
We call a nonassociative commutative algebra an {\em $\frak F$-axial algebra}
when it is generated by $\frak F$-axes.
If furthermore the algebra possesses an associating bilinear form,
we speak of a {\em Frobenius} $\frak F$-axial algebra.
These concepts are introduced in Section \ref{sec-ax}.

A key point of interest for us is that
an $\frak F$-axis induces a finite-order automorphism of the entire algebra
if the fusion rules $\frak F$ admit the appropriate finite-order grading.
In particular, if $\frak F$ is $\ZZ/2$-graded,
as in the Virasoro case mentioned above,
then each $\frak F$-axis induces an involution in the automorphism group of the algebra.
Therefore the automorphism groups of such $\frak F$-axial algebras are very rich.
For example, the Griess algebra $\cal G$ contains $\frak V(4,3)$-axes
whose induced automorphisms form precisely the smallest conjugacy class $2A$
of involutions in the Monster,
in its representation as the automorphism group of $\cal G$.
The automorphism groups of axial algebras,
and in particular axial algebras occurring as representations of transposition groups,
will be studied in a later paper.

In the first half of this paper,
we consider $\frak F$-axial algebras as standalone objects in their own right.
The principal result, over rings containing a subfield $\kk$, is
\begin{theorem}
	\label{thm-exists-universal}
	There exists a {\em universal $n$-generated Frobenius $\frak F$-axial algebra}.
	All Frobenius $\frak F$-axial algebras on $n$ generators
	are quotients of this universal object.
\end{theorem}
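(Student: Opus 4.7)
The plan is the standard ``free object modulo relations'' construction. I would begin with $\hat F$, the free nonassociative commutative $\kk$-algebra on $n$ formal generators $a_1,\ldots,a_n$, and enlarge the coefficients to a polynomial ring $\hat R=\kk[\tau_{u,v}:u,v\text{ monomials in }\hat F]/(\tau_{u,v}-\tau_{v,u})$, declaring $(u,v):=\tau_{u,v}$. This endows $\hat F$ with the universal symmetric $\hat R$-bilinear form; but the form is not yet associating and the $a_i$ are not yet axes, and both conditions will be enforced by passing to a quotient.

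Next I would assemble the ideal $I\subseteq\hat F$ generated by three families. First, $a_i^2-a_i$ for each $i$ (idempotency). Second, for each $i$ and each $x\in\hat F$ the expression $\prod_\lm(\ad(a_i)-\lm)(x)$, where $\lm$ ranges over the eigenvalues named by $\frak F$; this spectrum constraint makes the Lagrange projectors $\pi_{i,\lm}=\prod_{\mu\neq\lm}(\ad(a_i)-\mu)/(\lm-\mu)$ well-defined operators on $\hat F/I$, provided the differences of eigenvalues are invertible in $\kk$. Third, for each $i$, each pair $\lm,\mu$ of $\frak F$-eigenvalues, and each $\nu\notin\lm\ast\mu$, the expressions $\pi_{i,\nu}\bigl(\pi_{i,\lm}(x)\cdot\pi_{i,\mu}(y)\bigr)$ for all $x,y\in\hat F$ (fusion). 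The Frobenius condition I would impose in parallel by quotienting $\hat R$ by the ideal generated by all differences $(xy,z)-(x,yz)$ with $x,y,z\in\hat F$. Set $U_n=\hat F/I$ over this trimmed coefficient ring. By construction each $\bar a_i$ is an $\frak F$-axis and the descended form associates, so $U_n$ is a Frobenius $\frak F$-axial algebra on $n$ generators. Universality is then immediate: given any $n$-generated Frobenius $\frak F$-axial algebra $A$ over a $\kk$-algebra $R$ with generators $b_1,\ldots,b_n$ and form $(\cdot,\cdot)_A$, the assignment $a_i\mapsto b_i$, $\tau_{u,v}\mapsto(u(b),v(b))_A$ extends to a homomorphism that annihilates every generator of $I$, since $A$ satisfies the axial and Frobenius axioms; it therefore factors through a surjection $U_n\otimes_{\hat R}R\twoheadrightarrow A$.

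The main obstacle I expect is verifying that the listed relations really do make $\bar a_i\in U_n$ an honest axis rather than something weaker: one must check that the spectrum and fusion generators of $I$ combine to produce, in $U_n$, an $\ad(\bar a_i)$-invariant direct-sum decomposition graded by $\frak F$, with projectors satisfying $\sum_\lm\pi_{i,\lm}=\id$ and $\pi_{i,\lm}\pi_{i,\mu}=\dl_{\lm\mu}\pi_{i,\lm}$. This amounts to showing the Lagrange formula descends consistently to the quotient and that no accidental collapse is induced when fusion is enforced, together with the routine bookkeeping that turns the listed generators into an honest ideal of a nonassociative commutative algebra. A secondary concern is to certify $U_n\neq 0$; this is settled \eg by exhibiting any nontrivial $n$-generated Frobenius $\frak F$-axial algebra, since its image provides a nonzero quotient of $U_n$.
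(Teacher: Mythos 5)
Your outline follows the paper's general strategy closely: a free object (commutative nonassociative magma scaled by a universal coefficient ring carrying indeterminate form values) cut down by an ideal encoding the axial axioms, with universality following because any Frobenius $\frak F$-axial algebra annihilates those relators. The small presentational difference is that the paper bakes associativity of the form directly into the indexing of the indeterminates---$\lm_{[\hat x,\hat y]}$ is indexed by $\sim$-classes where $(\hat x\hat y,\hat z)\sim(\hat x,\hat y\hat z)$---while you start with a merely symmetric form and then quotient $\hat R$ by associativity relations afterward. Both work, but the paper's device makes the form associative on the nose in $\hat M$, which then reduces the whole business of carrying relators to a clean pair $(I,J)$ of ``matching'' ideals (Proposition~\ref{condition}): you must still check, as the paper does, that your parallel quotients of $\hat F$ and $\hat R$ are compatible, i.e.\ that $J\hat F\subseteq I$ and $\la I,\hat F\ra\subseteq J$, so that the $R$-module and form structures actually descend.

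There is, however, a genuine gap in your second family of relators. You impose
$\prod_{\lm\in\frak F}(\ad(a_i)-\lm)(x)=0$ for all $x$, which forces $\ad(a_i)$ to be semisimple with spectrum inside $\frak F$. But Definition~\ref{def-axis}(b) also requires \emph{primitivity}: $A_1=\la a_i\ra$. Your relator allows the $1$-eigenspace to be arbitrarily large, so in your $U_n$ the elements $\bar a_i$ need not be $\frak F$-axes at all; then $U_n$ fails to be a Frobenius $\frak F$-axial algebra and the first assertion of the theorem is not established (you would only have an algebra mapping onto all axial ones, not a universal member of the class). The paper fixes this in Lemma~\ref{b} by using $f=\prod_{\theta\in\frak F-\{1\}}(t-\theta)$ (omitting the factor $t-1$) and applying $f(\ad_{\hat m})$ not to arbitrary $\hat x$ but to $\hat x-\la\hat m,\hat x\ra\hat m$, i.e.\ to the projection onto $m^\perp$. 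Because $m\in M_1$, and since (as the paper observes after Lemma~\ref{b}) $m^\perp$ complements $M_1$, the condition $m^\perp\subseteq M_{\frak F-\{1\}}$ is equivalent to $M_1=\la m\ra$ together with semisimplicity over $\frak F$. Your Lagrange-projector formulation of fusion would then also need to be restricted to $\lm,\mu\in\frak F-\{1\}$ and applied to vectors coming from $m^\perp$, exactly as in Lemma~\ref{cell} and Corollary~\ref{c}.
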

It is a corollary of Theorem \ref{universal axial},
which gives a construction of the universal Frobenius $\frak F$-axial algebra on $n$ generators.
The proof is in two parts:
Section \ref{sec-uni-alg} has a construction of a universal nonassociative algebra with associative bilinear form,
and in Section \ref{sec-uni-ax} we specialise to the case
where this is generated by $\frak F$-axes.

In the second half of the paper, we specialise further to find
the universal Frobenius $\frak V(4,3)$-axial algebra on $2$ generators over $\QQ$.
Since $\frak V(4,3)$ is a fusion rules with $\ZZ/2$-grading,
any $\frak V(4,3)$-axis induces an involutory automorphism,
and any two involutory automorphisms generate a dihedral subgroup of the automorphism group.
The proof proceeds by some observations
on arbitrary $2$-generated Frobenius $\frak V(4,3)$-axial algebras (Section \ref{sec-two-gen}),
and explicitly computing a multiplication table (Section \ref{sec-cal})
for the universal object.
Finally, in Section \ref{sec-sakuma},
we compute some relations to classify its quotients:
\begin{theorem}
	\label{thm-intro-sakuma}
	There are $9$ Frobenius $\frak V(4,3)$-axial algebras on $2$ generators
	over any field of characteristic $0$.
	They are exactly the Norton-Sakuma algebras of \cite{conway}, \cite{sakuma}.
\end{theorem}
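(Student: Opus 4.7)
The plan is to reduce the classification to a finite computation in the universal object given by Theorem \ref{thm-exists-universal}. Let $\hat U$ denote the universal $2$-generated Frobenius $\frak V(4,3)$-axial algebra over $\QQ$; any Frobenius $\frak V(4,3)$-axial algebra on two generators $a,b$ is a quotient of $\hat U$ by an axial ideal, so the goal becomes to determine $\hat U$ explicitly and then enumerate its axial quotients. Since $\frak V(4,3)$ is $\ZZ/2$-graded, the axes $a$ and $b$ induce involutions $\tau_a,\tau_b \in \Aut(\hat U)$. Their product $\rho=\tau_a\tau_b$ together with $\tau_a$ generates a dihedral subgroup, possibly infinite in the universal case, and iterating one obtains a distinguished orbit of axes $a_i$ with $a_0=a$, $a_1=b$, $a_{i+2}=\tau_{a_{i+1}}(a_i)$. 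These axes, together with a small number of products $s_{ij}=a_i a_j$, should provide the spanning set for the whole two-generated algebra.

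The key computation, to be carried out in Section \ref{sec-cal}, is then to pin down the multiplication on this spanning set. Using the eigenspace decomposition of $\hat U$ with respect to $a$ into $\hat U_1\oplus \hat U_0\oplus \hat U_{1/4}\oplus \hat U_{1/32}$, together with the Virasoro fusion rules and the $\ZZ/2$-grading, each product $ab$, $a(ab)$, $\ldots$ expands into components which can be expressed in the spanning set modulo a small number of formal parameters, chiefly $\alpha=\langle a,b\rangle$ (the Frobenius pairing of the generators) together with one or two auxiliary scalars controlling $ab$. Associativity of the Frobenius form, and the symmetry $\tau_a(b)=2(a+b)-\tfrac{1}{?}\cdot(\text{stuff})$ coming from the $\ZZ/2$-grading, yield additional linear constraints which I would exploit to progressively eliminate dependencies.

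The main obstacle is to carry out this reduction rigorously without presupposing anything about the order of $\rho$, so that what is computed really is the universal object and not merely one of its quotients. Concretely, one must use the grading to split every product into the $+1$ and $-1$ eigenspaces of each $\tau_{a_i}$ and then apply the fusion rules, for instance $\hat U_{1/4}\cdot \hat U_{1/4}\subseteq \hat U_1\oplus \hat U_0$ and $\hat U_{1/32}\cdot \hat U_{1/32}\subseteq \hat U_1\oplus \hat U_0\oplus \hat U_{1/4}$, to each piece. Showing that sufficiently many of these constraints together force the two-generated subalgebra to close up on a finite-dimensional module parametrised only by the scalars above, with no hidden degrees of freedom, is where the real work lies.

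Finally, to classify the quotients I would extract from this multiplication table the polynomial relations forced by requiring each $a_i$ to be a genuine $\frak V(4,3)$-axis rather than merely an eigenvector for its own adjoint action; this should yield a finite system of polynomial equations in the parameters whose solutions over any characteristic-$0$ field are precisely nine. Matching each solution with the explicit data for the nine Norton-Sakuma algebras $1A,2A,2B,3A,3C,4A,4B,5A,6A$, whose multiplication tables are recorded in \cite{conway,sakuma}, completes the identification, while a final verification that each of these candidates does satisfy the $\frak V(4,3)$ fusion rules (and is pairwise non-isomorphic) shows that the list is exhaustive.
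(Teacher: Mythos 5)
Your plan matches the paper's at a high level: pass to the universal $2$-generated Frobenius $\frak V(4,3)$-axial algebra, exploit the dihedral action of the Miyamoto involutions to produce a spanning set, compute the multiplication table in terms of a few scalar parameters, and cut down to nine points by polynomial constraints that are matched against the Norton--Sakuma data. However, the final step is misattributed, and there are two technical ideas you are missing.

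First, the universal object $\tilde M$ of Theorem~\ref{universal axial} is \emph{constructed} by imposing \emph{all} of conditions (a$'$), (b), (c) — idempotence, semisimplicity, primitivity, fusion rules — as relators, so there is nothing left to gain from ``requiring each $a_i$ to be a genuine axis rather than merely an eigenvector.'' The constraints that actually cut the free coefficient ring $R_0=\QQ[\lambda,\mu]$ down to $\QQ^9$, where $\lambda=\langle a_0,a_1\rangle$ and $\mu=\langle a_0,a_2\rangle$, come from requiring the Frobenius form to associate on the computed products: the relations $p_1=\langle a_{-1}a_{-2},a_1\rangle-\langle a_{-1},a_{-2}a_1\rangle$ and $p_2=\langle a_{-2},a_1\rangle-\langle a_{-2},a_{-2}a_1\rangle$ of Lemma~\ref{lem-assoc-pol} generate a radical ideal with exactly nine rational points (Lemma~\ref{lem-ideal}). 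Second, the spanning set is not the raw products $a_ia_j$ but their $\tau$-invariant corrections $\sigma_1=a_0a_1-\tfrac{1}{32}(a_0+a_1)$, $\sigma_2^e$, $\sigma_2^o$ (Lemma~\ref{sigma}); it is exactly this invariance, together with the resurrection principle (Lemma~\ref{resurrection}), that lets one express $a_3$ in the eight-element set $W=\{a_{-2},\dots,a_2,\sigma_1,\sigma_2^e,\sigma_2^o\}$ and close the module under multiplication without any assumption on the order of $\rho$ — the concern you correctly raise but do not resolve. Finally, matching $(\lambda_i,\mu_i)$ to the Norton--Sakuma data only shows each $n_iX_i$ is a \emph{quotient} of the evaluation $U_i$; to conclude $U_i\cong n_iX_i$ one must also bound the dimension of $U_i$ from above, which the paper does in Lemma~\ref{lem exactly ns} by computing an ideal of relations forced by $T$-invariance of the product.
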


For us, this theorem is a corollary of Theorem \ref{thm-sakuma},
which describes the universal $2$-generated Frobenius $\frak V(4,3)$-axial algebra
as the direct sum of the Norton-Sakuma algebras.
It proves, in a more general setting,
results first obtained by Sakuma and then Ivanov et al.
Namely, in \cite{sakuma}, Sakuma obtained the startling result
that any two $\frak V(4,3)$-axes (in our terminology)
in the weight-$2$ component of a moonshine-type VOA
induce a dihedral subgroup of the automorphism group of size at most $12$.
In \cite{ipss}, Ivanov, Pasechnik, Seress and Shpectorov
proved that, over $\RR$,
the isomorphism type of an algebra with a positive definite associating bilinear form
generated by two $\frak V(4,3)$-axes
must be one of the nine possibilities first observed in \cite{conway}.
Our approach works over a more general ring,
and indeed the form is not assumed to be positive-definite.
Furthermore it can be seen that the algebra products
may be explicitly derived from the axioms of our situation.

The authors would like to thank Alonso Castillo-Ramirez and Miles Reid
for their help and advice,
and Sasha Ivanov, Sophie Decelle and Alonso Castillo-Ramirez
for the organisation of the stimulating {\em Monster, Majorana and Beyond} conference.

\section{Fusion rules}
\label{sec-fus}

Recall that in the representation theory of finite groups over the complex numbers,
a group $G$ has a finite set of irreducible modules $\Irr G = \{M_i\}$,
and every $G$-module is completely reducible.
The tensor product of two $G$-modules is again a $G$-module,
and hence admits a decomposition into irreducible modules.
Here we see the best-known instance of fusion rules,
as a map $f\colon\Irr G\tensor\Irr G\to\ZZ\Irr G$,
which assigns to $M_i\tensor M_j$ a vector $(f_{ij}^k)$
such that $M_i\tensor M_j \cong f_{ij}^1M_1\oplus f_{ij}^2M_2\oplus \dotsm$.
For more details and examples, we refer to \cite{fuchs}.

\begin{definition}
	\label{def-fusion-rules}
	A {\em fusion rules} $\frak F$ over a field $\kk$
	is a triple:
		the {\em central charge} $\op{CC}(\frak F) = c\in\kk-\{0\}$,
		the set $\op F(\frak F)\subseteq\kk$ of {\em fields}, and
		the {\em rules} $\star:\op F(\frak F)\times\op F(\frak F)\to \ZZ\op F(\frak F)$.
\end{definition}
For simplicity of notation, we use $\frak F$
to both refer to the triple $(\op{CC}(\frak F),\op F(\frak F),\star)$
and to the collection of fields $\op F(\frak F)$.
In principle, the fields of $\frak F$
parametrise a collection of irreducible modules,
the rules $\star$ specify the decomposition of their products,
and the central charge $c$ is an additional parameter
whose meaning will be discussed later.

We will only consider fusion rules $\frak F$
satisfying two additional conditions:
\begin{assumption}
	The fusion rules $\frak F$ have finitely many fields, and
	for any fields $f,f',f''\in\frak F$,
		$f''$ has multiplicity at most $1$ in $f\star f'$.
\end{assumption}
We have the latter condition 
because fields occurring with multiplicity greater than $1$
have no interpretation in our setup.
In particular, this implies
that $\star$ can take its values in the power set $\cal P(\frak F)$ of fields.
We will view $\star$ as $\cal P(\frak F)$-valued from now on.

Take $G$ a finite abelian group.
We say that $\frak F$ is $G$-graded
if there exists a partition $\{\frak F_g\}_{g\in G}$ of fields $\frak F$
such that, for all $g,h \in G$,
if $f\in \frak F_g$ and $f'\in\frak F_h$
then $f\star f'\subseteq\frak F_{gh}$.
In the simplest case, if $\frak F$ is $\ZZ/2$-graded,
we write $\ZZ/2$ as $\{+,-\}$ and correspondingly split 
$\frak F$ into $\frak F_+$ and $\frak F_-$.

While we do not give a review of the representation theory of Virasoro algebras,
we wish to summarise some results on the discrete series of Virasoro algebras
to allow the reader to make some calculations of fusion rules of her own.
The remainder of this section can also be skimmed for its examples.

Recall that the {\em Virasoro algebra}
is a central extension of the infinite-dimensional Witt Lie algebra,
with the following presentation:
\begin{equation}
	\op{Vir}_c = \CC\{L_i\}_{i\in\ZZ}\oplus\CC c,\quad
	[c,L_n] = 0,\quad
	[L_m,L_n] = (m-n)L_{m+n} + \frac{c}{12}m(m^2-1)\dl_{m+n}.
\end{equation}
In the action of $\op{Vir}_c$ on a module,
the central element $c$ can only act by a scalar in $\CC$.
By abuse of notation, we identify the central element $c$
with this scalar $c\in\CC$.
The representation theory of $\op{Vir}_c$ depends entirely on the value of $c$.
In particular,
we say that $\op{Vir}_c$ is {\em rational}
if it has finitely many irreducible modules up to isomorphism.
This occurs if and only if \cite{wang} $c\geq1$ or
\begin{equation}
	\label{def-cc-pq}
	c = c(p,q) = 1 - 6(p-q)^2/pq,\quad p,q\in\{2,3,\dotsc\}\text{ coprime}.
\end{equation}
From now on, we may write $\op{Vir}(p,q)$ for $\op{Vir}_{c(p,q)}$,
and {\em whenever we write $c(p,q)$} it is understood that
$p,q\in\{2,3,\dotsc\}$ are coprime integers.

The modules of $\op{Vir}(p,q)$
are parametrised by their highest weights
\begin{equation}
	\label{def-vir-fields}
	h(r,s) = h_{c(p,q)}(r,s) = \frac{(sp-rq)^2 - (p-q)^2}{4pq},\quad
	0 < r < p,\quad 0 < s < q.
\end{equation}
The number of distinct highest weights is linear in $p$ and $q$:
namely, $\frac{1}{2}(p-1)(q-1)$.
Furthermore the decomposition into irreducible modules
of the product of two modules
is governed by admissible triples,
that is,
\begin{equation}
	M_{h(r,s)}\otimes M_{h(t,u)} \leq
	\sum^{\min\{r+t-1,2p-r-t-1\}}_{v = 1 + \size{r-t},\\v \equiv_2 1+r+t}\quad
	\sum^{\min\{s+u-1,2q-s-u-1\}}_{w = 1 + \size{s-u},\\w \equiv_2 1+s+u}
	M_{h(v,w)}.
\end{equation}
It is clear from the above
that the decomposition is symmetric, and
that the multiplicity of any irreducible summand module is at most $1$.
It also follows that the fusion rules admit a unique nontrivial $\ZZ/2$-grading
if $p,q\geq 3$.

To adjust the data from the Virasoro representations
to the way it appears inside algebras,
as in, say, the weight-$2$ component of a moonshine-type VOA,
we have to add an extra field and do some rescaling.
\begin{definition}
A {\em Virasoro fusion rules} $\frak V(p,q)$ has
	central charge $c = c(p,q)$ from \eqref{def-cc-pq},
	fields 
	$$\{1\}\cup\left\{\frac{1}{2}h(r,s)\right\}_{0<r<p,\;0<s<q}
	\quad \text{ from \eqref{def-vir-fields} for }\op{Vir}(p,q),$$
	and fusion rules $\star$ as in \eqref{def-vir-rules} below.
\end{definition}
To define $\star$, temporarily write $\bullet$ for the binary operation
$$ h(r,s)\bullet h(r',s') = \{ h(r'',s'')\mid M_{h(r'',s'')}
	\text{ is direct summand of } M_{h(r,s)}\tensor M_{h(r',s')}\}. $$
Then define the fusion rules $\star$ of $\frak V(p,q)$ by
\begin{equation}
	\label{def-vir-rules}
	\begin{aligned}
	1\star \frac{1}{2}h(r,s) & = \frac{1}{2}h(r,s),\quad 1\star1=1, \\
	\frac{1}{2}h(r,s)\star \frac{1}{2}h(r',s') & = \begin{cases}
		\frac{1}{2}(h(r,s)\bullet h(r',s'))
			& \text{ if } 0\not\in h(r,s)\bullet h(r',s') \\
		\{1\}\cup \frac{1}{2}(h(r,s)\bullet h(r',s'))
			& \text{ if } 0\in h(r,s)\bullet h(r',s')
	\end{cases}
	\end{aligned}
\end{equation}

We give some examples.
The Virasoro algebra $\op{Vir}(4,3)$
has fields $0,\frac{1}{2},\frac{1}{16}$
and central charge $c = c(4,3) = 1 - \frac{6(1)^2}{12} = \frac{1}{2}$.
(It is well-known in the context of the Ising model,
or a fermion VOA \cite{cft}.)
Following our recipe for the fusion rules,
we have that $\frak V(4,3)$ is a fusion rules with
	central charge $\frac{1}{2}$,
	fields $1,0,\frac{1}{4},\frac{1}{32}$, and
	fusion rules
		\renewcommand{\arraystretch}{1.75}
		\begin{center}
		\begin{tabular}{|c||c|c|c|c|}
			\hline
					&$1$ &$0$ & $\frac{1}{4}$ & $\frac{1}{32}$\\
			\hline\hline
			$1$ &$1$ &$0$ & $\frac{1}{4}$ & $\frac{1}{32}$\\
			\hline
			$0$ &$0$ &$1,0$ & $\frac{1}{4}$ & $\frac{1}{32}$\\
			\hline
				$\frac{1}{4}$ & $\frac{1}{4}$ & $\frac{1}{4}$ &$1,0$ & $\frac{1}{32}$\\
			\hline
				$\frac{1}{32}$ & $\frac{1}{32}$ & $\frac{1}{32}$ & $\frac{1}{32}$ &$1,0, \frac{1}{4}$\\
			\hline
		\end{tabular}
		\end{center}
The $\ZZ/2$-grading is given by
$$ \frak V(4,3)_+ = \left\{ 1,0,\frac{1}{4} \right\} \text{ and }
	\frak V(4,3)_- = \left\{\frac{1}{32}\right\}. $$

Here are also the fusion rules $\frak V(5,3)$:
	with central charge $-\frac{3}{5}$,
	fields $1,0,\frac{1}{10},-\frac{1}{40},\frac{3}{8}$, and
	fusion rules
		\renewcommand{\arraystretch}{1.75}
		\begin{center}
		\begin{tabular}{|c||c|c|c|c|c|}
			\hline
					& $1$ & $0$ & $\frac{1}{10}$ & $-\frac{1}{40}$ & $\frac{3}{8}$ \\
			\hline\hline
				$1$ & $1$ & $0$ & $\frac{1}{10}$ & $-\frac{1}{40}$ & $\frac{3}{8}$ \\
			\hline
				$0$ & $0$ & $1,0$ & $\frac{1}{10}$ & $-\frac{1}{40}$ & $\frac{3}{8}$ \\
			\hline
				$\frac{1}{10}$ & $\frac{1}{10}$ & $\frac{1}{10}$ & $1, 0, \frac{1}{10}$ & $-\frac{1}{40},\frac{3}{8}$ & $-\frac{1}{40}$ \\
			\hline
				$-\frac{1}{40}$ & $-\frac{1}{40}$ & $-\frac{1}{40}$ & $-\frac{1}{40},\frac{3}{8}$ & $1$, $0$, $\frac{1}{10}$ & $\frac{1}{10}$ \\
			\hline
				$\frac{3}{8}$  & $\frac{3}{8}$  & $\frac{3}{8}$  & $-\frac{1}{40}$ & $\frac{1}{10}$ & $1, 0$ \\
			\hline
		\end{tabular}
		\end{center}
The $\ZZ/2$-grading is
$$ \frak V(5,3)_+ = \left\{ 1, 0, \frac{1}{10} \right\} \text{ and }
	\frak V(5,3)_- = \left\{ -\frac{1}{40}, \frac{3}{8} \right\}. $$

\section{Axial algebras}
\label{sec-ax}
Throughout, we let $R$ be a ring with $1$
containing a subfield $\kk$.
By {\em algebra} we mean a not-necessarily-associative commutative $R$-module $A$
with $R$-linear binary product.
Recall that $a\in A$ is an idempotent if $aa = a$.
For $a\in A$, define its adjoint $\ad(a)\in\End(A)$
as the mapping by `side'-multiplication $x\mapsto ax$
(right- and left-multiplication being the same).
We say that $a$ is {\em semisimple} if its adjoint matrix $\ad(a)$
is diagonalisable.
Equivalently, this means that $A$ has a direct sum decomposition
into $\ad(a)$-eigenspaces.
Also, we write $\Spec(a) = \Spec(\ad(a))$
for the set of eigenvalues of $\ad(a)$.
With respect to $a$
we furthermore write $A_\lm = \{x\in A\mid \ad(a)x = ax = \lm x\}$
for the $\lm$-eigenspace of $\ad(a)$
(and if $\ad(a)$ has no $\lm$-eigenvectors, we set $A_\lm = 0$),
by convention $A_{\{\lm,\mu\}} = A_\lm + A_\mu$,
and hence finally $A = A_{\Spec(a)}$.

Since our axial algebras somehow generalise associative algebras,
we start by considering the latter.
Suppose $A$ is associative and $a\in A$ is a semisimple idempotent.
If $x$ is a $\lm$-eigenvector of $\ad(a)$, say,
then $ax = \lm x$.
But also $\lm x = ax = (aa)x = a(ax) = a(\lm x) = \lm ax = \lm^2 x$,
and indeed we have that $\lm^n = \lm$ for all $n\in\NN$.
Thus $\lm$ can only be $0$ or $1$,
and $\ad(a)$ has only $0$- or $1$-eigenvectors.
In other words, $A$ has a direct sum decomposition
	$$ A = \op{Fix}(\ad(a))\oplus\op{Ann}(\ad(a)) = A_1 \oplus A_0. $$
Observe that, for $u,v\in A_1$ and $x,y\in A_0$,
	$$ a(uv) = (au)v = uv,\quad ux = (au)x = a(ux) = a(xu) = (ax)u = 0u = 0,\quad a(xy) = (ax)y = 0. $$
Therefore products of eigenvectors are again eigenvectors,
and we can speak of the following rules for the products of eigenvectors:
	\renewcommand{\arraystretch}{1.75}
	\begin{center}
	\begin{tabular}{|c||c|c|}
		\hline
				& $1$ & $0$ \\
		\hline\hline
			$1$ & $1$ & $0$ \\
		\hline
			$0$ & $0$ & $0$ \\
		\hline
	\end{tabular}
	\end{center}
Conversely, an algebra generated by semisimple idempotents
whose $1$-eigenspaces are $1$-dimensional
and whose eigenvectors satisfy these rules
is necessarily associative.
An axial algebra generalises associative algebras
by dropping the requirement of associativity
but axiomatising product (fusion) rules for eigenvectors of semisimple idempotents.

Suppose that $A$ is an algebra 
and $\frak F$ an arbitrary fusion rules as in Definition \ref{def-fusion-rules}.
\begin{definition}
	\label{def-axis}
	An element $a\in A$ is an $\frak F$-axis if 
	\begin{enumerate}
		\item[(a)] $a$ is an idempotent;
		\item[(b)] $a$ is semisimple, 
			$\Spec(a)\subseteq\frak F$ and it spans its own $1$-eigenspace,
			that is, $A_1 = \la a\ra$;
		\item[(c)] the {\em $\frak F$-rules} hold, that is,
			the eigenspaces $A_f,A_g$ for $f,g\in\frak F$
			satisfy $A_fA_g\subseteq A_{f\star g}$
			under the algebra product.
	\end{enumerate}
\end{definition}

If an idempotent $a\in A$ spans its $1$-eigenspace $A_1$,
as we demand in (b) above,
it will be called {\em primitive}
(in the literature, this is often {\em absolute primitivity}).
In particular, a primitive idempotent
does not admit a decomposition into a sum of pairwise annihilating idempotents.

\begin{definition}
	\label{def-ax-alg}
	A nonassociative commutative algebra $A$
	is an {\em $\frak F$-axial algebra}
	if it is generated by $\frak F$-axes.
\end{definition}

\begin{definition}
	\label{def-frobenius}
	An $\frak F$-axial algebra $A$ is {\em Frobenius}
	if it admits an associating bilinear form $\la,\ra$
	(that is, for $x,y,z\in A$, $\la xy,z\ra = \la x,yz\ra$)
	and, for any $\frak F$-axis $a$,
	$\la a,a\ra = 2\op{CC}(\frak F)$.
\end{definition}

Hence, for Frobenius $\frak F$-axial algebras,
we extend condition (a) to
\begin{enumerate}
	\item[(a')] $a$ is an idempotent and $\la a,a\ra = 2\op{CC}(\frak F)$.
\end{enumerate}
We can see that in the definition of a general $\frak F$-axial algebra,
the central charge of $\frak F$ plays no role.
In the specialised Frobenius case,
we use the central charge to fix the scaling of the form.
This is related to applications of our theory:
for example, an $\frak F$-axial algebra occuring in a VOA
has exactly the scaling fixed in this condition (a').
On the other hand, from the point of view of the algebra,
the form can be arbitrarily rescaled,
so long as it is not zero on the axes.

\begin{proposition}
	\label{prop-miyamoto-inv}
	If $a\in A$ is an $\frak F$-axis
    and $\frak F$ is $\ZZ/2$-graded into $\frak F_+$ and $\frak F_-$,
	then $a\in A$ produces an algebra automorphism,
	the {\em Miyamoto involution $\tau(a)$}
		\begin{equation}
			x^{\tau(a)} = \begin{cases}
				x & \text{ if } x \in A_{\frak F_+} \\
				-x & \text{ if } x \in A_{\frak F_-}, \end{cases}
		\end{equation}
	of order at most $2$ on $A$.
	\qed
\end{proposition}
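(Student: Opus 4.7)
The plan is to define $\tau(a)$ concretely using the eigenspace decomposition afforded by the $\frak F$-axis $a$, check that it is an involution, and then verify multiplicativity by reducing to pure eigenvectors and using the $\ZZ/2$-grading on $\frak F$.

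First, since $a$ is an $\frak F$-axis, condition (b) of Definition \ref{def-axis} gives a direct sum decomposition $A = \bigoplus_{f \in \Spec(a)} A_f$ with $\Spec(a) \subseteq \frak F$. The $\ZZ/2$-grading on $\frak F$ partitions its fields into $\frak F_+$ and $\frak F_-$, so we obtain a coarser decomposition $A = A_{\frak F_+} \oplus A_{\frak F_-}$. Define $\tau(a)$ to be $+1$ on $A_{\frak F_+}$ and $-1$ on $A_{\frak F_-}$, extended by linearity; this is well-defined precisely because of the direct sum. It is clearly $R$-linear and $\tau(a)^2 = \id$, giving order at most $2$.

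The content of the proposition is that $\tau(a)$ preserves the algebra product. By bilinearity, it suffices to check this on products of eigenvectors. So take $x \in A_f$ and $y \in A_g$ with $f \in \frak F_\ep$ and $g \in \frak F_\dl$ for $\ep,\dl \in \{+,-\}$. By the $\frak F$-rules (condition (c) of Definition \ref{def-axis}), we have $xy \in A_{f \star g}$, and by the $\ZZ/2$-grading hypothesis, $f \star g \subseteq \frak F_{\ep\dl}$. Hence $xy \in A_{\frak F_{\ep\dl}}$, so $\tau(a)(xy) = (\ep\dl)\,xy$, while on the other hand $\tau(a)(x)\,\tau(a)(y) = (\ep x)(\dl y) = (\ep\dl)\,xy$. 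The two agree, so $\tau(a)$ is an algebra homomorphism, and being an involution it is an automorphism.

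There is essentially no obstacle here: the proposition is a direct unpacking of the axioms, with the $\ZZ/2$-grading doing exactly the work needed to make the sign computation match up. The only mild subtlety worth noting is that $\tau(a)$ may be the identity (order $1$) when $A_{\frak F_-} = 0$, which is why the statement says \emph{at most} $2$; this is automatic from the definition and requires no separate argument.
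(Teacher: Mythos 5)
The paper states this proposition with a terminating $\qed$ and no proof, treating it as a direct unpacking of the axioms; your argument supplies exactly that unpacking, and it is correct. The reduction to eigenvectors via bilinearity, the use of the fusion rules together with the $\ZZ/2$-grading to land $xy$ in $A_{\frak F_{\ep\dl}}$, and the sign bookkeeping all match what the authors evidently had in mind.
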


We give an example of an axial algebra.
Let $A = \QQ\{a,b,c\}$ be a $3$-dimensional rational vector space
with algebra product given by, for any $\{x,y,z\} = \{a,b,c\}$,
\begin{equation}
	xx = x,\quad xy = \frac{1}{64}(x + y - z).
\end{equation}
Then the adjoint matrix of $x$ may be computed to deduce that,
with respect to $x$, $A$ decomposes into eigenspaces
\begin{equation}
	A_1 = \la x\ra,\quad
	A_0 = \la x - 32(y + z) \ra,\quad
	A_\frac{1}{32} = \la y - z \ra.
\end{equation}
Moreover it can be calculated that
this decomposition satisfies the fusion rules of $\frak V(4,3)$,
although the field $\frac{1}{4}$ does not occur.
Hence by Proposition \ref{prop-miyamoto-inv} it follows that $\tau(x)\in\Aut(A)$
is a nontrivial automorphism sending $y-z$ to $z-y$,
and hence $\tau(x)$ swaps $y$ and $z$.
The presentation of the group now obviously implies that
$\la\tau(a),\tau(b),\tau(c)\ra \cong Sym(3)$,
which turns out to be the full automorphism group.
Finally we may define a bilinear form by $\la x,y\ra = \frac{1}{64}$
for $x\neq y\in\{a,b,c\}$
and fixing $\la x,x\ra = 1 = 2\op{CC}(\frak V(4,3))$ for all $x\in\{a,b,c\}$;
by calculation this form is associating.
Thus $A$ is a Frobenius $\frak V(4,3)$-axial algebra.
In the literature, $A$ is usually referred to by its isomorphism type $(3C)$.

In fact, all $3$-transposition groups
occur as automorphism groups of $\frak F$-axial algebras
where $\frak F$ is a fusion rules with three fields $\{1,0,\theta\}$;
indeed, the above construction gives a Frobenius axial algebra
if $\frac{1}{32}$ is replaced by an arbitrary field $\theta\in\RR$,
and the construction generalises to put an algebra structure
on the Fischer space of a $3$-transposition group.
For more details, see \cite{hrs}.

Frobenius $\frak V(4,3)$-axial algebras have also been studied
with some additional assumptions
as Majorana algebras.
See for example \cite{ipss},
which first proved Sakuma's theorem in a more general setting.

Finally, we record some simple results.

\begin{proposition}
	\label{lem-form-sym}
	A associative bilinear form $\la,\ra$ on an $\frak F$-axial algebra $A$ is symmetric.
\end{proposition}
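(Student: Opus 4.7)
The plan is to first prove symmetry in the special case that one argument is an $\frak F$-axis, using the eigenspace decomposition of that axis, and then to bootstrap to arbitrary elements by induction on the length of a word in the generating axes.

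For the base case, fix an axis $a \in A$. Commutativity together with associativity of the form yields $\la ax, y\ra = \la xa, y\ra = \la x, ay\ra$ for all $x, y$, so $\ad(a)$ is self-adjoint with respect to $\la, \ra$. Combined with $a^2 = a$, this forces $\la a, x\ra = \la x, a\ra = 0$ whenever $x$ lies in an eigenspace $A_\mu$ for $\mu \neq 1$: indeed, $\mu \la a, x\ra = \la a, ax\ra = \la a^2, x\ra = \la a, x\ra$, so $(1 - \mu) \la a, x\ra = 0$, and the mirror computation handles $\la x, a\ra$. Decomposing any $y \in A$ along the $\ad(a)$-eigenspaces, only the $A_1$-component contributes to $\la a, y\ra$ or $\la y, a\ra$; and by primitivity (clause (b) of Definition \ref{def-axis}), $A_1 = \la a\ra$, so this component equals $c a$ for some $c \in R$, and both pairings evaluate to $c\la a, a\ra$. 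Hence $\la a, y\ra = \la y, a\ra$ for every $y$.

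For the general case, bilinearity of $\la, \ra$ reduces the claim to pairs of monomials in the generating axes. I induct on the word length of the first argument $m$. The base case where $m$ is an axis is done above. For $m = m_1 m_2$ with $m_1, m_2$ strictly shorter monomials, I apply the inductive hypothesis to $m_1$ against the arbitrary element $m_2 n$, alternating associativity with commutativity:
\begin{equation*}
	\la m, n\ra = \la m_1 m_2, n\ra = \la m_1, m_2 n\ra = \la m_2 n, m_1\ra = \la n m_2, m_1\ra = \la n, m_2 m_1\ra = \la n, m\ra.
\end{equation*}

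I expect the base case to be the only real difficulty: it is the unique step that exploits anything beyond the commutative-and-associating setup, and it is where semisimplicity and primitivity of the axis come into play. The inductive step is a mechanical rearrangement of the identities $\la xy, z\ra = \la x, yz\ra$ and $xy = yx$, so the content of the lemma is really about how the eigenspace structure of an axis interacts with the form.
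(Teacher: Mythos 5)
Your proof is correct, but it does more work than necessary and misses the paper's key trick. The paper's proof is a single direct computation with no induction and no eigenspace analysis: since $A$ is generated by idempotents, \emph{every} monomial $b$ (including an axis itself, via $a = aa$) can be written as $b = b_1b_2$, and then
$\la b,c\ra = \la b_1b_2,c\ra = \la b_1,b_2c\ra = \la b_1,cb_2\ra = \la b_1c,b_2\ra = \la cb_1,b_2\ra = \la c,b_1b_2\ra = \la c,b\ra$
using only associativity of the form and commutativity of the product, with no appeal to any previously-established special case of symmetry. Your base case --- self-adjointness of $\ad(a)$, vanishing of the pairing with eigenvectors for $\mu \neq 1$, primitivity to pin down the $A_1$-component --- is all correct, but it is entirely avoidable once one notices that idempotency makes the ``split it as a product'' move available even for $a$ alone. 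Your inductive step is then also correct but slightly heavier than needed: the link $\la m_1, m_2n\ra = \la m_2n, m_1\ra$ invokes the inductive hypothesis, whereas one can get from $\la m_1, m_2 n\ra$ to $\la n, m_1 m_2\ra$ by associativity and commutativity alone, which is why the paper needs no induction at all. So you buy nothing by the extra structure (semisimplicity, primitivity) you invoked --- the lemma is really a statement about algebras generated by idempotents with an associating form, and your phrasing at the end (``the content of the lemma is really about how the eigenspace structure of an axis interacts with the form'') misidentifies where the content lies.
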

\begin{proof}
	Since $A$ is generated by $\frak F$-axes,
	it is generated by idempotents.
	Furthermore $A$ is spanned by monomials,
	each of which is therefore a product of two other elements.
	Suppose $b,c$ are arbitrary monomial products of axes.
	Then $b = b_1b_2$ and
	$ \la b,c\ra = \la b_1b_2,c \ra = \la b_1,b_2c \ra = \la b_1c,b_2\ra = \la c,b_1b_2\ra = \la c,b\ra $,
	as required.
\end{proof}

\begin{proposition}
	\label{lem-form-perp}
	The eigenspaces of an $\frak F$-axis in an algebra $A$
	are perpendicular with respect to an associative bilinear form $\la,\ra$ on $A$.
\end{proposition}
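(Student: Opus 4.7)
The proof is a one-line computation using associativity of the form together with the fact that eigenvalues lie in $\kk \subseteq R$ and so their differences are invertible in $R$ whenever nonzero.

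Concretely, let $a$ be the $\frak F$-axis, and take $x \in A_\lm$, $y \in A_\mu$ with $\lm \neq \mu$ both in $\frak F \subseteq \kk$. I would compute
\begin{equation*}
	\lm\la x,y\ra = \la \lm x, y\ra = \la ax, y\ra = \la x, ay\ra = \la x,\mu y\ra = \mu\la x,y\ra,
\end{equation*}
using the associativity property of $\la,\ra$ in the middle step and $R$-bilinearity at the ends. Rearranging gives $(\lm - \mu)\la x,y\ra = 0$, and since $\lm - \mu \in \kk^\times$ is a unit of $R$, we conclude $\la x,y\ra = 0$.

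The only subtlety to flag is that the eigenvalues live in the subfield $\kk$ (this is built into the definition of a fusion rules), so even though $R$ itself may have zero divisors, the scalar $\lm - \mu$ can be cancelled. Since $A$ decomposes as $A = \bigoplus_{f\in\Spec(a)} A_f$, this pairwise orthogonality shows every $A_f$ is perpendicular to $\sum_{g\neq f} A_g$, which is the desired conclusion. There is no real obstacle here; the lemma is essentially a restatement of the classical fact that eigenvectors for distinct eigenvalues of a self-adjoint operator are orthogonal, with self-adjointness of $\ad(a)$ supplied by the associativity of the form.
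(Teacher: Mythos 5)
Your proof is correct and is essentially identical to the paper's own argument: the same computation $\lm\la x,y\ra = \la ax,y\ra = \la x,ay\ra = \mu\la x,y\ra$, followed by the same observation that $\lm-\mu\in\kk$ is invertible. The added remark about self-adjointness of $\ad(a)$ is a nice gloss but the underlying reasoning matches the paper step for step.
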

\begin{proof}
	Suppose $x\in A_\lm$ and $y\in A_\mu$ with respect to $a$.
	Then $\lm\la x,y\ra = \la \lm x,y\ra = \la ax,y\ra = \la x,ay\ra = \la x,\mu y\ra = \mu\la x,y\ra$.
	It follows that $(\lm-\mu)\la x,y\ra = 0$.
	But the fields $\lm,\mu\in\frak F$ lie in the field $\kk$,
	so $\lm - \mu$ is either $0$ or invertible.
	It follows that if $\lm\neq\mu$ then $\la x,y\ra = 0$.
\end{proof}

\begin{corollary}
	\label{cor-0-behaviour}
	If $\frak F$ is a fusion rules realised in an $\frak F$-axial algebra $A$,
	then necessarily $1\in\frak F$,
	and if further $0\in\frak F$ and $A$ is Frobenius, then $1\not\in0\star0$.
\end{corollary}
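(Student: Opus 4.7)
The plan is that both claims fall out quickly from Definitions \ref{def-axis} and \ref{def-frobenius} combined with Proposition \ref{lem-form-perp}; no new machinery is needed.

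For the first claim, I would simply note that any $\frak F$-axis $a$ is an idempotent, so $\ad(a)\,a = aa = a = 1\cdot a$, which exhibits $a$ itself as a nonzero $1$-eigenvector of $\ad(a)$. Hence $1\in\Spec(a)$, and the axiom $\Spec(a)\subseteq\frak F$ from Definition \ref{def-axis}(b) forces $1\in\frak F$. Since an $\frak F$-axial algebra by definition contains at least one axis, the conclusion follows without further work.

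For the second claim, suppose $0\in\frak F$ and that $A$ is Frobenius with form $\la,\ra$. Pick any $\frak F$-axis $a\in A$ and any $x,y\in A_0$ (with respect to $a$). The strategy is to compute $\la a,xy\ra$ in two ways and extract the coefficient of $a$ in the eigenspace decomposition of $xy$. By associativity of the form, $\la a,xy\ra = \la ax,y\ra = \la 0,y\ra = 0$. On the other hand, write $xy = \lm a + z$ with $z$ lying in the direct sum of the $f$-eigenspaces of $\ad(a)$ for $f\in\Spec(a)\setminus\{1\}$; Proposition \ref{lem-form-perp} then gives $\la a,z\ra = 0$, so $\la a,xy\ra = \lm\la a,a\ra = 2\lm\,\op{CC}(\frak F)$. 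Since $\op{CC}(\frak F)\neq 0$ by Definition \ref{def-fusion-rules}, we conclude $\lm = 0$: products of $0$-eigenvectors never have a nontrivial $A_1$-component.

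The only mild obstacle is interpreting what "$1\notin 0\star 0$" means precisely, since Definition \ref{def-axis}(c) only demands the inclusion $A_0A_0\subseteq A_{0\star 0}$ and in principle allows superfluous fields to be listed. The phrasing "$\frak F$ realised in an $\frak F$-axial algebra" however indicates we are identifying $\frak F$ with the rules that actually appear, so the vanishing of the $A_1$-component for every axis, as established above, is exactly the assertion $1\notin 0\star 0$.
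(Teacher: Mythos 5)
Your proof is correct and takes essentially the same route as the paper's: both compute $\la a,xy\ra = \la ax,y\ra = 0$ for $x,y\in A_0$ from associativity of the form and then use $\la a,a\ra = 2\op{CC}(\frak F)\neq 0$ (via perpendicularity of eigenspaces) to conclude the $A_1$-component of $xy$ vanishes. The paper omits the first claim as immediate from Definition \ref{def-axis}(b), but your short justification of it is fine.
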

\begin{proof}
	Observe that, if $z_1,z_2\in A_0$ with respect to an $\frak F$-axis $a$,
	then $\la a,z_1z_2\ra = \la az_1,z_2\ra = \la 0,z_2\ra = 0$,
	whence $z_1z_2$ is perpendicular to $a$
	and therefore $(A_0A_0)\cap A_1 = (A_0A_0)\cap\la a\ra= 0$.
\end{proof}

The following condition holds in associative algebras
and we therefore consider it a natural property for all fusion rules.
\begin{definition}
	\label{def-seress}
	A fusion rules $\frak F$ satisfies the {\em Seress condition}
	if $0\in\frak F$, $0\star1=0$ and $0\star f=f$ for all $f\in\frak F-\{1\}$.
\end{definition}
\begin{proposition}
	\label{prop-seress}
	If $\frak F$ satisfies the Seress condition,
	then an $\frak F$-axis $a$
	associates with its $0$-eigenvectors.
	That is, for $x \in A$, $z\in A_0$, we have $a(xz) = (ax)z$.
\end{proposition}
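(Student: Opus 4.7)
The plan is to use the semisimplicity of the axis $a$ to decompose an arbitrary $x\in A$ into eigencomponents, and then verify the associating identity on each component separately, using linearity to conclude.

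Concretely, since $a$ is semisimple we can write $x = \sum_{f\in\frak F} x_f$ with $x_f\in A_f$. By bilinearity of the product, it suffices to show $a(x_f z) = (ax_f)z$ for each $f\in\frak F$, where $z\in A_0$ (so that in particular $az = 0$). I would carry out the verification in three cases:

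\textbf{Case $f=1$:} Here $x_f\in A_1 = \la a\ra$, so $x_f = \lm a$ for some scalar $\lm$. Then $x_f z = \lm(az) = 0$ because $z\in A_0$, so $a(x_fz) = 0$; on the other hand $(ax_f)z = \lm(aa)z = \lm az = 0$ as well. \textbf{Case $f=0$:} Now $ax_f = 0$, so the right-hand side vanishes. For the left-hand side, the Seress condition gives $0\star 0 = 0$ (since $0\in\frak F-\{1\}$ and $0\star 0 \subseteq \{0\}$), hence $x_fz\in A_0A_0\subseteq A_0$ and so $a(x_fz) = 0$. \textbf{Case $f\neq 0,1$:} The Seress condition yields $0\star f = f$, so $x_fz\in A_fA_0\subseteq A_f$. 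Therefore $a(x_fz) = f(x_fz) = f x_f\cdot z = (ax_f)z$. Summing over $f$ gives the desired identity.

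I don't anticipate a serious obstacle: the proof is essentially a bookkeeping exercise in eigenspace decomposition, and the Seress condition has been tailored precisely to make each case work (the condition $0\star1=0$ is automatic here since $A_1 = \la a\ra$ and $az=0$ handles the $f=1$ case outright, while $0\star f = f$ for the remaining fields is exactly what is needed so that $x_fz$ stays in an eigenspace on which $a$ acts as the expected scalar). The only minor point to be careful about is that $A_1 = \la a\ra$ is part of the axis axioms (Definition \ref{def-axis}(b)), which is what lets the $f=1$ case go through without additional hypotheses on $\frak F$.
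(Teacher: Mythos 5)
Your proof is correct and takes essentially the same approach as the paper: decompose $x$ into eigencomponents and treat the case $x_f\in A_1$ separately from the rest, using the Seress condition $0\star f = f$ to show $x_fz$ remains in $A_f$ when $f\neq 1$. The only cosmetic difference is that the paper handles $f=0$ and $f\neq 0,1$ in a single step (since $0\star f = f$ covers $f=0$ too, giving $a(x_fz)=0=(ax_f)z$), whereas you split them into two subcases.
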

\begin{proof}
	Suppose that $x$ is a $\lm$-eigenvector for $a$.
	If $x\in A_1$ then $x = ra$ for some $r\in R$
	and so $a(xz) = a(raz) = a0 = 0 = r0 = raz = r(aa)z = (ax)z$.
	Otherwise, by the fusion rules,
	$xz\in A_{\lm\star 0} = A_\lm$ and therefore $a(xz) = \lm xz = (ax)z$ as required.
\end{proof}

This next and last lemma is a simple version of the so-called resurrection principle.
\begin{lemma}
	\label{resurrection}
	Suppose that $a$ is an $\frak F$-axis
	and let $B\subseteq A$ be an $R$-submodule of $A$
	such that $\ad(a)B=aB\subseteq B$.
	If for $x\in A$ there exist $b_\lm,b_0\in B$
	such that $y = x + b_\lm \in A_\lm$ and $z = x + b_0 \in A_0$ respectively,
	then $x = \frac{1}{\lm}a(b_\lm - b_0) - b_\lm$,
	and in particular $x\in B$.
\end{lemma}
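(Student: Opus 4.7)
The plan is direct, essentially unpacking the two eigenspace conditions and solving a linear system for $ax$. First, the condition $y\in A_\lm$ means $ay = \lm y$, which expands (using $R$-linearity of $\ad(a)$) to
\[ ax + ab_\lm = \lm x + \lm b_\lm. \]
Second, the condition $z\in A_0$ means $az = 0$, which expands to
\[ ax + ab_0 = 0, \]
so $ax = -ab_0$. Substituting this into the first equation yields $a(b_\lm-b_0) = \lm x + \lm b_\lm$. The scalar $\lm$ lies in $\kk$ and is implicitly nonzero (the stated formula requires $1/\lm$), hence invertible; dividing gives
\[ x = \tfrac{1}{\lm}a(b_\lm-b_0) - b_\lm, \]
which is the claimed identity.

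For the final assertion, since $b_\lm,b_0\in B$ and $B$ is an $R$-submodule, we have $b_\lm - b_0\in B$; by the invariance hypothesis $aB\subseteq B$, then $a(b_\lm-b_0)\in B$, and multiplying by the scalar $1/\lm\in\kk\subseteq R$ keeps us inside $B$. Subtracting $b_\lm\in B$ again keeps us inside $B$, so $x\in B$.

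There is no real obstacle here: the lemma is a two-line manipulation, and its interest lies not in the proof but in later applications, where the nontrivial work will be exhibiting a suitable $\ad(a)$-invariant submodule $B$ together with two ``approximations'' of $x$ by elements of $B$ living in different eigenspaces.
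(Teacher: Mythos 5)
Your proof is correct and uses essentially the same argument as the paper: both subtract the eigenvector equations $ay=\lm y$ and $az=0$ to isolate $a(b_\lm-b_0)=\lm(x+b_\lm)$ and then divide by $\lm$. You merely expand the algebra a bit more explicitly and spell out the membership of $x$ in $B$, which the paper leaves implicit.
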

\begin{proof}
	Since $y$ and $z$ are eigenvectors, we have $ay = \lm y$ and $az = 0$.
	Subtracting the second from the first, $a(y-z) = \lm y$.
	Since $y - z = b_\lm - b_0$, this gives us $\frac{1}{\lm}a(b_\lm-b_0) = y = x + b_\lm$,
	which yields the claim.
\end{proof}

\section{Universal algebras on $n$ generators}
\label{sec-uni-alg}
We will construct,
in the category of (commutative, nonassociative) algebras with associative bilinear forms on $n$ (marked) generators,
an initial object.
We restrict to the case of rings $R$ that contain a subfield $\kk$.

Consider the following category $\cal C_0$. Its objects are all tuples
$(R,M,\bar a)$, where $R$ is a coefficient ring, $M$ is a commutative
$R$-algebra with an associative bilinear form, and $\bar a=(a_1,\dotsc,a_k)$
is a $n$-tuple of elements generating $M$. A morphism from $(R,M,\bar a)$ to 
$(R',M',\bar a')$ is a pair $(\phi,\psi)$, where $\phi$ is a ring 
homomorphism $R\to R'$, 
and $\psi$ is an algebra homomorphism $M\to M'$ compatible with $\phi$. 
Compatibility means that, in addition to preserving sums and products, 
$\psi$ also satisfies:
    $$\psi(rm)=\phi(r)\psi(m),$$
for all $m\in M$ and $r\in R$. We further require that $\psi(a_i)={a_i}'$
for $i\leq k, a_i\in\bar a,{a_i}'\in\bar a'$ (where, as above, $\bar a = 
(a_1,\dotsc,a_k)$ and $\bar a ' = ({a_1}',\dotsc,{a_k}')$) and that the bilinear form be
preserved in the following sense:
    $$\la\psi(l),\psi(m)\ra=\phi(\la l,m\ra ),$$
for all $l,m\in M$.

We will talk about objects $M$ and morphisms $\psi$,
viewing $R$ and $\bar a$ as attributes of $M$ and,
similarly, viewing $\phi$ as an attribute of $\psi$.
We will call $R$ the {\em coefficient ring} of $M$
and $\bar a$ the {\em marked} elements of $M$.
We will call $\phi$ the {\em coefficient homomorphism} of $\psi$
and require that $\phi$ restricted to the subfield $\kk$ is the identity.

This category $\cal C_0$ possesses an initial object, which can be
constructed as follows. Let $\hat X$ be the free commutative nonassociative magma
with generators $\hat a_i$, $i\leq k$. Let $\sim$ be the equivalence
relation on $\hat X\times\hat X$ generated by all elementary equivalences
$(\hat x\hat y,\hat z)\sim(\hat x,\hat y\hat z)$ for $\hat x,\hat y,
\hat z\in\hat X$. For $\hat x,\hat y\in\hat X$, let $[\hat x,\hat y]$
denote the $\sim$-equivalence class containing $(\hat x,\hat y)$.
Furthermore, let $[\hat X\times\hat X]$ denote the set of all
$\sim$-equivalence classes.

Let $\hat R=\kk[\{\lm_{[\hat x,\hat y]}\}_{[\hat x,\hat y]\in [\hat
X\times\hat X]}]$ be the ring of polynomials with rational
coefficients and with an infinite number of indeterminates
$\lm_{[\hat x,\hat y]}$ indexed by the $\sim$-equivalence classes.

Let $\hat M=\hat R\hat X$ be the set of all formal linear
combinations $\sum_{\hat x\in \hat X}\al_{\hat x}\hat x$ of the
elements of $\hat X$ with coefficients in $\hat R$. As usual, only
finitely many coefficients in a linear combination can be nonzero.
Then $\hat M$ is a commutative $\hat R$-algebra with the obvious
addition and with multiplication defined by the operation in $\hat
X$ and the distributive law. We can also define a bilinear form on
$\hat M$ as follows:
    $$\biggl\la\sum_{\hat x\in\hat X}\al_{\hat x}\hat x,\sum_{\hat y\in\hat X}
    \bt_{\hat y}\hat y\biggr\ra=\sum_{\hat x,\hat y\in\hat X}\al_{\hat x}\bt_{\hat y}
    \lm_{[\hat x,\hat y]}.$$
This corresponds to setting the form on the basis $\hat X$ of $\hat
M$ to be
    $$\la\hat x,\hat y\ra=\lm_{[\hat x,\hat y]}.$$
The fact that the indeterminate depends on the $\sim$-equivalent
class (and not on the individual pair) implies that thus defined
bilinear form on $\hat M$ is associative. Hence, $\hat M$ is a
commutative $\hat R$-algebra with an associative bilinear form. As a
final touch, we select the obvious marked elements in $\hat M$; they
are $\hat a_i$ for $i\leq k$, the generators of $\hat X$. Clearly, this
makes $\hat M$ an object of $\cal C_0$.

\begin{proposition}
	\label{initial}
    The algebra $\hat M$ is the initial object of $\cal C_0$.
    Namely, for every object $M$ of $\cal C_0$,
    there exists a unique morphism $\psi_M$ from $\hat M$ to $M$.
\end{proposition}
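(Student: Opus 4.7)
The plan is to define the morphism $(\phi,\psi)$ in stages, at each stage using the category axioms to show that our choice is forced, and then to check that the resulting pair is indeed a well-defined morphism.

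First I would define $\psi$ on the magma $\hat X$. The requirement $\psi(\hat a_i)=a_i$ fixes $\psi$ on the generators, and since $\hat X$ is the \emph{free} commutative nonassociative magma on the $\hat a_i$, the multiplicative rule $\psi(\hat x\hat y)=\psi(\hat x)\psi(\hat y)$ extends $\psi$ uniquely to a magma map $\hat X\to M$. No consistency check is needed because there are no relations in $\hat X$ to respect.

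Next I would build $\phi$. Since $\phi$ is required to restrict to the identity on $\kk$, and since $\hat R$ is a polynomial ring over $\kk$, specifying $\phi$ on the indeterminates $\lm_{[\hat x,\hat y]}$ determines it uniquely as a $\kk$-algebra homomorphism. The requirement that $\psi$ preserve the bilinear form forces
\[\phi(\lm_{[\hat x,\hat y]})=\phi(\la\hat x,\hat y\ra)=\la\psi(\hat x),\psi(\hat y)\ra.\]
The main thing to check here is well-definedness on $\sim$-equivalence classes: if $(\hat x\hat y,\hat z)\sim(\hat x,\hat y\hat z)$ is an elementary equivalence, then
\[\la\psi(\hat x\hat y),\psi(\hat z)\ra=\la\psi(\hat x)\psi(\hat y),\psi(\hat z)\ra=\la\psi(\hat x),\psi(\hat y)\psi(\hat z)\ra=\la\psi(\hat x),\psi(\hat y\hat z)\ra,\]
using associativity of the form on $M$. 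Thus $\phi$ is well-defined on each generator, and extends uniquely to a $\kk$-algebra homomorphism $\hat R\to R$.

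Having fixed $\phi$, I would then extend $\psi$ from $\hat X$ to all of $\hat M=\hat R\hat X$ by the compatibility rule
\[\psi\biggl(\sum_{\hat x\in\hat X}\al_{\hat x}\hat x\biggr)=\sum_{\hat x\in\hat X}\phi(\al_{\hat x})\psi(\hat x).\]
Because $\hat X$ is an $\hat R$-basis of $\hat M$, this is the unique such extension. It remains to verify that this $\psi$ is an algebra homomorphism preserving the form: additivity and $\phi$-semilinearity are immediate from the definition; multiplicativity reduces by bilinearity to the magma case already handled; and preservation of the form reduces by bilinearity to the defining identity for $\phi$ on the indeterminates.

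Uniqueness, finally, follows by tracing the forcing arguments backwards: any morphism $(\phi',\psi')$ must send $\hat a_i$ to $a_i$, hence agrees with $\psi$ on $\hat X$ by the magma freeness; must satisfy $\phi'(\lm_{[\hat x,\hat y]})=\la\psi'(\hat x),\psi'(\hat y)\ra=\la\psi(\hat x),\psi(\hat y)\ra$, hence agrees with $\phi$ on the generators of $\hat R$ and thus on all of $\hat R$; and must be $\phi'$-semilinear, hence agrees with $\psi$ on all of $\hat M$. The only place any real work occurs is the well-definedness of $\phi$ on $\sim$-classes, which is precisely where the associativity of the target form is invoked; everything else is bookkeeping on the free construction.
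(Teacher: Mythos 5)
Your proposal is correct and follows essentially the same three-stage construction as the paper: define $\psi$ on the free magma $\hat X$, define $\phi$ on the polynomial generators $\lm_{[\hat x,\hat y]}$ (using associativity of the form on $M$ to get well-definedness on $\sim$-classes), and extend $\psi$ to $\hat M$ by $\phi$-semilinearity. Your version is a bit more explicit about the uniqueness argument and the final verifications, but the underlying idea and decomposition are identical.
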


\begin{proof} We start by considering $\psi_M$ on $\hat X$. Let $X$
be the submagma in $M$ generated by the marked elements $\bar a$.
Since $\hat X$ is a free commutative magma with generators $\hat a_i$,
$i\leq k$, there is a unique magma homomorphism $\hat X\to X$
that maps $\hat a_i$ to $a_i\in\bar a$ for each $i$.
Clearly, $\psi_M$ must coincide on $\hat X$ with this homomorphism,
since the marked elements of $\hat M$ must be mapped to the marked
elements of $M$.

Next we switch temporarily to the coefficient homomorphism $\phi_M$
matching $\psi_M$.
We require that $\phi_M$ must be identity on $\kk$ and it
should send $\lm_{[\hat x,\hat y]}=\la\hat x,\hat y\ra\in\hat R$ to
$\la\psi_M(\hat x),\psi_M(\hat y)\ra\in R$ for all $\hat x,\hat y\in \hat X$. 
Here $R$ is the coefficient ring of $M$. (Note also that the value of
$\la\psi_M(\hat x),\psi_M(\hat y)\ra$ does not depend on the choice of the 
pair $(\hat x,\hat y)$ within its $\sim$-equivalence class, because the 
bilinear form on $M$ is associative.) Since $\hat R$ is the ring of
polynomials, there exists a unique homomorphism from $\hat R$ to $R$
satisfying the above conditions, which is clearly $\phi_M$.

Finally, we must extend $\psi_M$ from $\hat X$ to the entire $\hat
M$ by linearity, as follows:
$$\psi_M\biggl(\sum_{x\in \hat X}\al_xx\biggr)=\sum_{x\in \hat
X}\phi_M(\al_x)\psi_M(x).$$
Thus, the morphism $\psi_M$ exists and is unique.\end{proof}

We call an object $M$ of $\cal C_0$ {\em rigid} if both $\psi_M$ and
$\phi_M$ are surjective, that is, if the $a_i\in\bar a$ generate the
algebra $M$ and the values of the associative bilinear form on the
submagma $X$ of $M$, generated by the marked elements, generate the
coefficient ring $R$ over the base field $\kk$. We note that the
initial object $\hat M$ is rigid and, in fact,
the above argument can be generalized to show that if $M$ is rigid
then there is at most one morphism from $M$ to any other $M'$.

We only consider rigid objects of $\cal C_0$. Let $\cal C$ be a subcategory
whose objects are the class representatives of isomorphism classes of rigid objects of $\cal C_0$.
We call $\cal C$ the {\em category of $n$-generated commutative algebras with associative
forms}, and, correspondingly, we call $\hat M$ the {\em universal
$n$-generated commutative algebra with associative form}. Taking the above comments
into account, $\cal C$ can be viewed as the category of all quotients
of $\hat M$. Indeed, every algebra $M$ in $\cal C$ is the quotient, up to isomorphism, of
$\hat M$ over the ideal $I_M=\ker\psi_M$ and the coefficient ring
$R$ is the quotient of $\hat R$ over the ideal $J_M=\ker\phi_M$. For
two objects, $M$ and $M'$, of $\cal C$ a morphism from $M$ to $M'$
exists if and only if $I_M\supseteq I_{M'}$ and $J_M\supseteq
J_{M'}$.

The presence of the associative bilinear form means that there is a
certain dependence between the ideals $I_M$ and $J_M$. Given
$I\unlhd\hat M$ and $J\unlhd\hat R$, we say that these ideals {\em
match} if $I=I_M$ and $J=J_M$ for some algebra $M$ from $\cal C$.

\begin{proposition} \label{condition}
Suppose $I\unlhd\hat M$ and $J\unlhd\hat R$. Then $I$ and $J$ match
if and only if $J\ne\hat R$ and, additionally, $J\hat M\subseteq I$
and $\la i,m\ra \in J$ for all $i\in I$ and $m\in\hat M$.
\end{proposition}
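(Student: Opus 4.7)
The plan is to handle the two implications separately. The forward direction is routine: suppose $I=\ker\psi_M$ and $J=\ker\phi_M$ match for some $M\in\cal C$. Then $J\neq\hat R$ because $\phi_M$ restricts to the identity on $\kk$, so $1\notin J$. The compatibility relation $\psi_M(rm)=\phi_M(r)\psi_M(m)$ forces $rm\in I$ whenever $r\in J$, giving $J\hat M\subseteq I$. Finally, preservation of the form, $\phi_M(\la l,m\ra)=\la\psi_M(l),\psi_M(m)\ra$, specialised at $l=i\in I$ yields $\la i,m\ra\in J$.

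For the converse, the plan is to construct an algebra $M\in\cal C$ that realises the given ideals as its matching kernels. The natural candidate is the explicit quotient $M:=\hat M/I$ with coefficient ring $R:=\hat R/J$, marked elements $a_i:=\hat a_i+I$, product inherited from $\hat M$, scalar action $(r+J)(m+I):=rm+I$, and form $\la m+I,n+I\ra:=\la m,n\ra+J$. Once these are all well-defined, the rest follows swiftly: $R$ retains $\kk$ as a subfield because $J\cap\kk$ is an ideal of the field $\kk$ that cannot equal $\kk$ (else $1\in J$, contradicting $J\neq\hat R$); the product and form on $M$ inherit commutativity and associativity from $\hat M$; the canonical projections $\hat R\to R$ and $\hat M\to M$ assemble into a morphism in $\cal C_0$ whose kernels are manifestly $J$ and $I$; and rigidity of $M$ is inherited from that of $\hat M$, since the marked-element and form-value generating sets of $\hat M$ and $\hat R$ descend to generating sets in the quotients.

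The main obstacle, as is typical for such quotient constructions, is the well-definedness step. The scalar action $(r+J)(m+I)=rm+I$ is independent of representatives precisely because the hypothesis gives $J\hat M\subseteq I$, combined with $I$ being closed under $\hat R$-multiplication (as an ideal of $\hat M$). For the form, representative-independence expands to
$$ \la m+i,n+i'\ra - \la m,n\ra = \la i,n\ra + \la m,i'\ra + \la i,i'\ra, $$
which must lie in $J$ for all $m,n\in\hat M$ and $i,i'\in I$. The first and third terms are handled directly by the hypothesis $\la i,m\ra\in J$; the middle term is handled by the same hypothesis read in the other argument slot, using the symmetry of the form on the universal object (which one may either assume in the construction of $\hat M$, or derive from associativity and commutativity as in Proposition~\ref{lem-form-sym} in any rigid setting of interest). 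Once well-definedness is secured, the remaining identifications are mechanical and conclude the matching of $I$ and $J$.
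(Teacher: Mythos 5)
Your proof takes exactly the paper's route: the forward direction unwinds the morphism axioms, and the converse builds $M=\hat M/I$ over $R=\hat R/J$ with the induced action and induced form and verifies well-definedness. You are in fact more careful than the paper in the form well-definedness check: you explicitly isolate the cross-term $\la m,i'\ra$, which requires $\la\hat M,I\ra\subseteq J$, whereas the paper asserts well-definedness ``because $\la I,\hat M\ra\subseteq J$,'' silently omitting this half. Closing that gap via symmetry of the form on $\hat M$ is the right idea, but your citation of Proposition~\ref{lem-form-sym} does not quite work: that proposition assumes the algebra is generated by idempotents (so that every monomial, including each generator, is a product of two others), and this fails for $\hat M$, whose marked generators are free. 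Indeed, as $\sim$ is literally defined in the paper, no elementary equivalence applies to a pair of generators, so for $i\ne j$ the classes $[\hat a_i,\hat a_j]$ and $[\hat a_j,\hat a_i]$ are distinct singletons and $\la\hat a_i,\hat a_j\ra\ne\la\hat a_j,\hat a_i\ra$ in $\hat M$. Your first alternative --- building symmetry directly into the construction of $\hat M$, by enlarging $\sim$ to also identify $(\hat x,\hat y)$ with $(\hat y,\hat x)$ --- is the fix that actually works and is surely the intended reading; with that emendation both your argument and the paper's are complete, and otherwise identical.
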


\begin{proof} Suppose $I$ and $J$ match, that is, $I=I_M$ and $J=J_M$ for some
$M$ in $\cal C$. Since the coefficient ring $R$ of $M$ is an extension
of $\kk$ and $R\cong\hat R/J_M=\hat R/J$, we must have that
$J\cap\kk=\{0\}$, which implies that $J\ne\hat R$. Also, if $j\in J$
and $m\in\hat M$ then $\phi_M(j)=0$ and so
$\psi_M(jm)=0\psi_M(m)=0$, and hence $jm\in I_M=I$. Similarly, if
$i\in I$ and $m\in\hat M$ then
$\phi_M(\la i,m \ra )=\la\psi_M(i),\psi_M(m)\ra=\la0,\psi_M(m)\ra=0$, which means
that $\la i,m \ra \in J$, so the conditions above are necessary.

Conversely, suppose the conditions in the statement of the
proposition are satisfied. Set $M=\hat M/I$ and $R=\hat R/J$. Also
set $\psi_M$ and $\phi_M$ to be the corresponding projections $\hat
M\to M$ and $\hat R\to R$. Define the action of $R$ on $M$ via
$(r+J)(m+I)=rm+I$. This is well-defined, since $J\hat M\subseteq I$.
Also, define the bilinear form on $M$ by $\la m+I,n+I \ra =\la m,n \ra + I$.
Again, this is well-defined, because $\la I,M \ra \subseteq J$.
It is a routine exercise now to check that thus defined $M$
is an object of $\cal C$ and that $I=I_M$ and $J=J_M$,
yielding that $I$ and $J$ match.\end{proof}

\begin{corollary} \label{intersection}
Suppose we have families of matching ideals $I_s\unlhd\hat M$ and
$J_s\unlhd\hat R$, where $s$ runs through some index set $S$. Then
$I=\cap_{s\in S}I_s$ and $J=\cap_{s\in S}J_s$ match, too.
\end{corollary}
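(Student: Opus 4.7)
The plan is to verify the three conditions of Proposition \ref{condition} for the pair $(I, J)$ where $I = \cap_{s \in S} I_s$ and $J = \cap_{s \in S} J_s$. Since each $(I_s, J_s)$ is a matching pair, Proposition \ref{condition} tells us that $J_s \neq \hat R$, that $J_s \hat M \subseteq I_s$, and that $\langle I_s, \hat M\rangle \subseteq J_s$. We want to upgrade each of these properties from the individual $s$ to the intersection.

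First I would check that $J \neq \hat R$. Since each $J_s$ is proper and $1 \in \hat R$, we have $1 \notin J_s$ for every $s$, hence $1 \notin \cap_s J_s = J$, so $J$ is proper. Equivalently, one may note that $J_s \cap \kk = \{0\}$ for every $s$ (because $\phi_M$ is required to be the identity on $\kk$), so $J \cap \kk = \{0\}$ too.

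Next I would verify $J \hat M \subseteq I$. For any $j \in J$ and $m \in \hat M$, since $j \in J_s$ for every $s$, the hypothesis gives $jm \in J_s \hat M \subseteq I_s$ for every $s$, so $jm \in \cap_s I_s = I$. Similarly, for the associativity-of-the-form condition, take $i \in I = \cap_s I_s$ and $m \in \hat M$; then $i \in I_s$ for every $s$, hence $\langle i, m\rangle \in J_s$ for every $s$, hence $\langle i, m\rangle \in J$.

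All three conditions of Proposition \ref{condition} being satisfied, we conclude that $I$ and $J$ match. There is no real obstacle here; the corollary is a direct formal consequence of the matching criterion, which is the point of phrasing that criterion in the first place. The only subtlety worth flagging is the properness of $J$, which could in principle fail for arbitrary intersections of proper ideals in an arbitrary ring, but does not fail here because each $J_s$ avoids the subfield $\kk \subseteq \hat R$.
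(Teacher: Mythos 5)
Your proof is correct and follows essentially the same route as the paper's: verify the conditions of Proposition \ref{condition} for the intersection by checking them termwise. You are in fact slightly more careful than the paper, whose own proof omits the check that $J\neq\hat R$; your observation that each $J_s\cap\kk=\{0\}$ (or, assuming $S$ is nonempty, simply that $J\subseteq J_s\subsetneq\hat R$) closes that gap cleanly.
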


\begin{proof} Indeed, $J\hat M\subseteq J_s\hat M\subseteq I_s$ for each $s$.
Hence $J\hat M\subseteq I$. Similarly, for $i\in I$ and $m\in\hat
M$, we have that $i\in I_s$ and so $\la i,m \ra \in J_s$, for each $s$.
Hence $\la i,m \ra \in J$.\end{proof}

We conclude this section with a discussion of ``presentations'' of
algebras from $\cal C$. Suppose $U$ is a subset of $\hat M$ and $V$ is
a subset of $\hat R$. We are interested in the subcategory $\cal C_{U,V}$ of
all algebras $M$ from $\cal C$, for which we have $U\subseteq I_M$ and
$V\subseteq J_M$. It follows from Corollary \ref{intersection} that
if $\cal C_{U,V}$ is nonempty, that is, there exist at least one
matching pair $I$ and $J$ containing $U$ and $V$, respectively, then
there exists a unique minimal matching pair containing them. This
means that, when $\cal C_{U,V}$ is nonempty, it contains one largest
algebra $M(U,V)$, and $C_{U,V}$ is simply the category of all
quotients of $M(U,V)$. We record this in the following proposition,
where we also give a more concrete description of the minimal
matching ideals $I(U,V)$ and $J(U,V)$.

Let $I_0$ be the ideal of $\hat M$ generated by $U$. Define $J(U,V)$
to be the ideal of $\hat R$ generated by $V$ together with all
elements $\la i,m \ra $, for $i\in I_0$ and $m\in\hat M$. Set $I(U,V)$ to
be the ideal of $\hat M$ additively generated by $U$ and $J(U,V)\hat M$.

\begin{proposition}
    \label{presentation}
    The subcategory $\cal C_{U,V}$ is nonempty if and only if
    $I(U,V)\ne\hat R$. Furthermore, if the latter holds then $I(U,V)$
    and $J(U,V)$ match and hence correspond to some algebra $M(U,V)$ of
    $\cal C$. The algebra $M(U,V)$ is the initial object of $\cal C_{U,V}$ and
    $\cal C_{U,V}$ consists of all quotients of $M(U,V)$.
\end{proposition}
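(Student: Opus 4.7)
The plan is to apply the matching criterion of Proposition \ref{condition} and the minimality argument implicit in Corollary \ref{intersection} to the specifically constructed ideals $I(U,V)$ and $J(U,V)$. First I would prove that any matching pair $(I,J)$ with $U\subseteq I$ and $V\subseteq J$ automatically satisfies $I(U,V)\subseteq I$ and $J(U,V)\subseteq J$: since $I$ is an ideal containing $U$, it contains the ideal $I_0$ generated by $U$; the matching condition then yields $\la I_0,\hat M\ra\subseteq J$, and combined with $V\subseteq J$ this shows $J(U,V)\subseteq J$; from $J(U,V)\hat M\subseteq J\hat M\subseteq I$ and $U\subseteq I$ we then get $I(U,V)\subseteq I$. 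This minimality does double duty: it gives the forward direction of the stated equivalence (from any matching pair, $J(U,V)\subseteq J\ne\hat R$, forcing $I(U,V)\ne\hat M$), and it will supply the initial-object property as soon as $(I(U,V),J(U,V))$ is shown to match.

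For the converse direction I would verify, via Proposition \ref{condition}, that $(I(U,V),J(U,V))$ is itself a matching pair under the stated nondegeneracy hypothesis, noting that $J(U,V)=\hat R$ immediately forces $J(U,V)\hat M=\hat M\subseteq I(U,V)$, so the two possible nondegeneracy conditions $I(U,V)\ne\hat M$ and $J(U,V)\ne\hat R$ are equivalent here. Two nontrivial closure conditions need checking. The inclusion $J(U,V)\hat M\subseteq I(U,V)$ is immediate from the defining decomposition $I(U,V)=I_0+J(U,V)\hat M$. The inclusion $\la I(U,V),\hat M\ra\subseteq J(U,V)$ is proved by unpacking: a general $i\in I(U,V)$ has the form $i_0+\sum_k\al_k m_k$ with $i_0\in I_0$ and $\al_k\in J(U,V)$, $m_k\in\hat M$; then for any $m\in\hat M$, the value $\la i_0,m\ra$ lies in $J(U,V)$ by the very definition of $J(U,V)$, while $\la \al_k m_k,m\ra=\al_k\la m_k,m\ra\in J(U,V)$ since $J(U,V)$ is an ideal of $\hat R$, and linearity completes the check.

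Once matching is established, $M(U,V):=\hat M/I(U,V)$ with coefficient ring $\hat R/J(U,V)$ is an object of $\cal C$ lying in $\cal C_{U,V}$ (because $U\subseteq I(U,V)$ and $V\subseteq J(U,V)$), and the minimality argument of the first paragraph combined with the description of morphisms in $\cal C$ as inclusions of ideals $I_M\supseteq I_{M'}$ and $J_M\supseteq J_{M'}$ shows that $M(U,V)$ admits a unique morphism into every object of $\cal C_{U,V}$; hence $\cal C_{U,V}$ consists precisely of the quotients of $M(U,V)$. The main obstacle is the bookkeeping in checking $\la I(U,V),\hat M\ra\subseteq J(U,V)$: one must ensure that the extra additive contributions $J(U,V)\hat M$ — introduced to make the matching condition $J\hat M\subseteq I$ go through — do not in turn produce bilinear form values outside $J(U,V)$, which is precisely what the ideal structure of $J(U,V)$ in $\hat R$, together with $\hat R$-bilinearity of the form, guarantees.
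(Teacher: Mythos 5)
Your proof follows the paper's argument almost step for step: you establish minimality of $(I(U,V),J(U,V))$ among matching pairs containing $(U,V)$, verify the two matching conditions of Proposition~\ref{condition} by exactly the decomposition $i=i_0+\sum_k\alpha_k m_k$ the paper uses, and conclude initiality from the ideal-inclusion characterisation of morphisms in $\cal C$. That part is correct.

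One assertion you make in passing is false and worth correcting. After noting the (evident) typo in the statement --- $I(U,V)$ is an ideal of $\hat M$, not of $\hat R$ --- you argue that $J(U,V)=\hat R$ forces $I(U,V)=\hat M$, and then claim the two conditions $I(U,V)\neq\hat M$ and $J(U,V)\neq\hat R$ are \emph{equivalent}. You have only shown the implication $J(U,V)=\hat R\Rightarrow I(U,V)=\hat M$. The converse can fail: if the ideal $I_0$ generated by $U$ already equals $\hat M$ (for instance if $U$ contains all the generators $\hat a_i$), then $I(U,V)=\hat M$ automatically, while $J(U,V)$ is merely the ideal of $\hat R$ generated by $V$ and all values $\lambda_{[\hat x,\hat y]}$, which does not contain $1$ and so is a proper ideal. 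In that situation $\cal C_{U,V}$ is nonempty (it contains the zero algebra, which is a rigid object of $\cal C$), even though $I(U,V)=\hat M$. Consequently your forward-direction remark ``$J(U,V)\subseteq J\ne\hat R$, forcing $I(U,V)\ne\hat M$'' does not follow; the correct statement of nonemptiness, and the one the paper's own proof actually uses, is $J(U,V)\neq\hat R$, which your argument does establish directly. The remainder of your proof is unaffected by this misstep.
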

\begin{proof}
    Let $I=I(U,V)$ and $J=J(U,V)$. If $\cal C_{U,V}$ is nonempty, say
it contains an algebra $M$, then $U\subseteq I_M$ and $V\subseteq
J_M$. In view of Proposition \ref{condition}, $J_M$ contains all
values $\la i,m \ra $ for $i\in I_M$ and $m\in\hat M$. Since $U\subseteq
I_M$, we see that $I_0\subseteq I_M$ and so $J_M$ contains all
$\la i,m \ra $, where $i\in I_0$ and $m\in\hat M$. Since also $V\subseteq
J_M$, we conclude that $J\subseteq J_M$. Quite similarly, we argue
that $I\subseteq I_M$. It follows that $M$ is in $\cal C_{U,V}$ if and
only if $I\subseteq I_M$ and $J\subseteq J_M$.

In particular, if $J=\hat R$ then $\cal C_{U,V}$ is empty. Suppose now
that $J\ne\hat R$. To see that $I$ and $J$ match, we first notice
that by definition $I$ contains $J\hat M$. For the second part of
the condition, select $i\in I$ and $m\in\hat M$. Then $i$ can be
written as
$$i=i_0+\sum_sj_sm_s,$$
where $s$ is an index, $i_0\in I_0$, and $j_s\in J$ and $m_s\in\hat
M$ for all $s$. Hence
$$\la i,m \ra =\la i_0,m\ra+\sum_sj_s\la m_s,m\ra \in J.$$
Indeed, $\la i_0,m\ra$ is in $J$ by the definition of $J$, while each
$j_s\la m_s,m\ra$ is in $J$ because $j_s\in J$ and $J$ is an ideal. Thus,
the second part of the condition also holds and so $I$ and $J$
match, yielding the second claim. The final claim is also clear,
since we established that $M$ is contained in $\cal C_{U,V}$ if and
only if $I\subseteq I_M$ and $J\subseteq J_M$.\end{proof}

We will say that $U$ and $V$ form a {\em presentation} for the
algebra $M(U,V)$. The elements of $U$ and $V$ will be called
relators. More in particular, the elements of $U$ will be called
{\em algebra relators}, while the elements of $V$ will be called
{\em coefficient relators}.

\section{Universal Frobenius $\frak F$-axial algebras on $n$ generators}
\label{sec-uni-ax}
Throughout this section, fix a fusion rules $\frak F$ over $\kk$.
Since all Frobenius $\frak F$-axial algebras are objects of $\cal C$,
they are quotients of the universal commutative algebra $\hat M$.
Our plan is as follows: we reformulate the conditions of \ref{def-axis} as
statements that certain elements of $\hat M$ are contained in $I_M$,
or that certain elements of $\hat R$ are contained in $J_M$. This
gives us sets of relators $U\subseteq\hat M$ and $V\subseteq\hat R$,
and we define the universal Frobenius $\frak F$-axial algebra as $M(U,V)$.

We start by reformulating condition (a').

\begin{lemma} \label{a}
    An algebra $M$ from $\cal C$ satisfies condition (a') for all $m\in\bar a$
    if and only if $\hat a_i\hat a_i-\hat a_i\in I_M$ and
    $\lm_{[\hat a_i,\hat a_i]}-2\op{CC}(\frak F)\in J_M$ for all $i\leq k$.
\end{lemma}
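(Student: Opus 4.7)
The plan is a straightforward dictionary between condition (a') applied to the marked elements $a_i \in \bar a$ and membership in the defining kernels $I_M = \ker\psi_M$ and $J_M = \ker\phi_M$. Recall that by construction of $\hat M$ we have $\psi_M(\hat a_i) = a_i$ for each $i \leq k$, and the compatibility of $\psi_M$ with the bilinear forms yields $\phi_M(\lm_{[\hat x,\hat y]}) = \phi_M(\la\hat x,\hat y\ra) = \la\psi_M(\hat x),\psi_M(\hat y)\ra$ for all $\hat x,\hat y \in \hat X$; in particular $\phi_M(\lm_{[\hat a_i,\hat a_i]}) = \la a_i,a_i\ra$.

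First I would dispatch the idempotency clause. Since $\psi_M$ is an algebra homomorphism, $\psi_M(\hat a_i\hat a_i - \hat a_i) = a_i a_i - a_i$, so $a_i$ is idempotent in $M$ if and only if this image vanishes, if and only if $\hat a_i\hat a_i - \hat a_i \in \ker\psi_M = I_M$. Next I would handle the form condition. By the compatibility relation above, $\phi_M(\lm_{[\hat a_i,\hat a_i]} - 2\op{CC}(\frak F)) = \la a_i,a_i\ra - 2\op{CC}(\frak F)$, where I use that $\phi_M$ restricts to the identity on $\kk$ and that $2\op{CC}(\frak F) \in \kk$ by Definition \ref{def-fusion-rules}. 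Thus $\la a_i,a_i\ra = 2\op{CC}(\frak F)$ if and only if $\lm_{[\hat a_i,\hat a_i]} - 2\op{CC}(\frak F) \in \ker\phi_M = J_M$. Combining both equivalences for each $i \leq k$ gives the lemma.

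Since every step is a direct unpacking of definitions, I do not anticipate a substantive obstacle; the only point meriting care is the appeal to $\phi_M|_\kk = \op{id}$, without which the scalar $2\op{CC}(\frak F)$ could not be transported between $\hat R$ and the coefficient ring $R$ of $M$.
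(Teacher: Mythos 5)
Your proof is correct and follows essentially the same approach as the paper's: both translate condition (a') into kernel membership by direct unpacking of the definitions of $\psi_M$, $\phi_M$, $I_M$, and $J_M$. Your version is slightly more explicit about the role of $\phi_M|_\kk = \op{id}$ in transporting the scalar $2\op{CC}(\frak F)$, which is a reasonable point to flag, but the substance is identical.
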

\begin{proof}
    Clearly, for $i\leq k$, $a_ia_i=a_i$ if and only if $a_ia_i-a_i=0$. Since
    $\psi_M(\hat a_i)=a_i$, we have, equivalently,
    that $\hat a_i\hat a_i-\hat a_i\in I_M$.

    Similarly, for $i\leq k$, since $\la\hat a_i,\hat a_i\ra=\lm_{[\hat a_i,\hat a_i]}$,
    the condition that $\la a_i,a_i\ra=2\op{CC}(\frak F)$
    (or equivalently, $\la a_i,a_i\ra-2\op{CC}(\frak F)=0$) reformulates as $\lm_{[\hat
    a_i,\hat a_i]}-2\op{CC}(\frak F)\in J_M$.
\end{proof}

Let $m$ be one of the marked elements of $M$, that is, $m\in\bar a$.
In order to be able to reformulate conditions (b) and (c) for $m$,
we should be able to say things about eigenvectors and eigenspaces
of $\ad(m)$. We note that $\ad(m)$ belongs to the ring of endomorphisms
of $M$ viewed as an $R$-module. Hence we can talk about nonnegative
powers of $\ad(m)$ (where multiplication is, as usual, by composition)
as particular endomorphisms of $M$. Also, we can then talk about
endomorphism $f(\ad(m))$, where $f\in\kk[t]$ is an arbitrary
polynomial with rational coefficients. For example, if $f=t^3+2t-3$
then $f(\ad(m))$ sends $n\in M$ to $m(m(mn))+2mn-3n$. (We must use
parentheses, because $M$ is nonassociative.) In any case, $f(\ad(m))n$
is again an element of $M$, so we can use this sort of formula to
express in a compact form particular elements of $M$. The following
simple observations will prove useful.

\begin{lemma} \label{polynomial}
    \begin{itemize}
        \item[{\rm (1)}] Suppose $f=gh$ for $f,g,h\in\kk[t]$.  Then
            $f(\ad(m))n=g(\ad(m))(h(\ad(m))n)=h(\ad(m))(g(\ad(m))n)$ for all $m,n\in M$.
        \item[{\rm (2)}] If $\psi$ is a morphism from $M'$ to $M$ and
            $m',n'\in M'$ then $\psi(f(\ad_{m'})n')=f(\ad_{\psi(m')})\psi(n')$.\qed
    \end{itemize}
\end{lemma}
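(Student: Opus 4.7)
The plan for (1) is to reduce to the well-known fact that the polynomial ring $\kk[t]$ maps homomorphically into any commutative subring of endomorphisms generated by a single element. Concretely, $M$ carries an $R$-module structure and $\kk \subseteq R$, so $M$ is also a $\kk$-module, and the operator $\ad(m)\colon n \mapsto mn$ lies in $\End_R(M) \subseteq \End_\kk(M)$. The ring $\End_\kk(M)$ is associative under composition, so the subalgebra generated by $\ad(m)$ and the scalars $\kk\cdot\id$ is commutative. The evaluation map $\ev_{\ad(m)}\colon \kk[t] \to \End_\kk(M)$, $f \mapsto f(\ad(m))$, is therefore a $\kk$-algebra homomorphism. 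Applying this to $f = gh$ gives $f(\ad(m)) = g(\ad(m))\circ h(\ad(m)) = h(\ad(m))\circ g(\ad(m))$, whence evaluating on $n$ produces the claim. I would present this carefully, because the subtlety is that the algebra $M$ itself is nonassociative, so one must be explicit that the associativity being used is that of composition in $\End_\kk(M)$, not of the product in $M$.

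For (2), the plan is induction on the degree of $f$, together with linearity. First I would record the base cases: $\psi$ commutes with $\ad$ in the sense that $\psi(\ad_{m'}(n')) = \psi(m'n') = \psi(m')\psi(n') = \ad_{\psi(m')}(\psi(n'))$ by the homomorphism property, and then iterating yields $\psi(\ad_{m'}^k(n')) = \ad_{\psi(m')}^k(\psi(n'))$ for all $k \geq 0$. Second, writing $f = \sum_k c_k t^k$ with $c_k \in \kk$, linearity of $\psi$ together with the requirement that the coefficient homomorphism $\phi$ restricted to $\kk$ is the identity gives
\[
\psi\bigl(f(\ad_{m'})\,n'\bigr) = \sum_k \phi(c_k)\,\psi(\ad_{m'}^k n') = \sum_k c_k\,\ad_{\psi(m')}^k(\psi(n')) = f(\ad_{\psi(m')})\,\psi(n').
\]

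There is no real obstacle in either half; the only points worth being slightly careful about are, for (1), distinguishing associativity of endomorphism composition from associativity of the algebra product (which does not hold in $M$), and for (2), using that polynomial coefficients live in $\kk$ so that the coefficient homomorphism $\phi$ acts trivially on them. Both parts should fit in a few lines each.
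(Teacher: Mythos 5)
The paper treats this lemma as immediate and offers no written proof (the statement ends with \verb|\qed|). Your argument is the standard one and is correct: part (1) reduces to the fact that $f\mapsto f(\ad(m))$ is a $\kk$-algebra homomorphism from the commutative ring $\kk[t]$ into the associative ring $\End_\kk(M)$, and you rightly flag that the associativity used is that of composition, not of the (nonassociative) product in $M$; part (2) follows from $\psi(m'n')=\psi(m')\psi(n')$ by iteration, plus $\kk$-linearity of $\psi$ via $\phi\vert_\kk=\id$.
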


Note that part (2) will be applied with $M'=\hat M$ and
$\psi=\psi_M$.

We will also need some standard linear algebra facts. Suppose $V$ is
a vector space over a field $\kk$ and suppose $\phi $ is a linear mapping
$V\to V$. For $\mu\in K$, let $V_\mu = V_\mu(\phi )$ be the eigenspace of $\phi $
corresponding to $\mu$. Note that $\mu$ is an eigenvalue of $\phi $ if
and only if $V_\mu\ne 0$.

\begin{lemma} \label{value}
    Suppose $\mu\in K$ and $f\in K[t]$. Then $f(\phi )$ acts on $V_\mu$ via
    multiplication with $f(\mu)$. In particular, $f(\phi )V_\mu=0$ if $\mu$
    is a root of $f$, and $f(\phi )V_\mu=V_\mu$, otherwise.\qed
\end{lemma}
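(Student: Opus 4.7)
The plan is to reduce everything to the single fact that $\phi^n$ acts on $V_\mu$ as multiplication by $\mu^n$ and then extend by linearity, since $f$ is a polynomial. Concretely, I would fix $v\in V_\mu$, so by definition $\phi(v)=\mu v$, and prove by induction on $n\geq 0$ that $\phi^n(v)=\mu^n v$: the base case $n=0$ is the identity, and the inductive step gives $\phi^{n+1}(v)=\phi(\mu^n v)=\mu^n\phi(v)=\mu^{n+1}v$ using linearity of $\phi$ (noting that $\mu^n\in K$ is a scalar).

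Next, writing $f=\sum_{i=0}^{d} a_i t^i$ with $a_i\in K$, the endomorphism $f(\phi)$ is by definition $\sum_{i=0}^{d}a_i\phi^i$, and applying it to $v$ and using the previous step yields
\[
f(\phi)(v)=\sum_{i=0}^{d}a_i\phi^i(v)=\sum_{i=0}^{d}a_i\mu^i v=f(\mu)\,v.
\]
Thus $f(\phi)$ restricted to $V_\mu$ is scalar multiplication by $f(\mu)\in K$, which is the main assertion.

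For the ``in particular'' clause I would split on whether $f(\mu)=0$. If $\mu$ is a root of $f$ then $f(\mu)=0$, so $f(\phi)V_\mu=0$ immediately. Otherwise $f(\mu)\in K^\times$ is invertible, so multiplication by $f(\mu)$ is a $K$-linear bijection $V_\mu\to V_\mu$; in particular its image is all of $V_\mu$, giving $f(\phi)V_\mu=V_\mu$.

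There is no genuine obstacle here: the statement is a routine spectral-mapping observation for a single eigenvalue, and the only point to keep an eye on is that we are working over a field $K$ (so that $f(\mu)\neq 0$ is automatically invertible) — this is why the lemma is stated for the vector space $V$ over $K$ rather than in the more general module setting used elsewhere in the paper.
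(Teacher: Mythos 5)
Your proof is correct and complete. The paper states this lemma with no proof at all (it is marked \qed as an immediate standard fact), so there is nothing to compare it against; your argument — reducing to $\phi^n v = \mu^n v$ by induction, then extending $K$-linearly to a polynomial and using that $f(\mu)\in K$ is either zero or invertible — is exactly the routine spectral-mapping argument one would supply, and your closing remark about why the lemma is set over a field rather than a general coefficient ring is a fair observation.
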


We will now derive some consequences.

\begin{lemma} \label{in}
    Suppose $\mu_1,\ldots,\mu_n$ are distinct elements of $\kk$. Then
    $v\in V$ is contained in $\oplus_{i=1}^nV_{\mu_i}$ if and only
    if $f(\phi )v=0$, where $f=\prod_{i=1}^n(t-\mu_i)\in K[t]$.
\end{lemma}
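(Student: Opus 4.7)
The plan is to prove the two implications separately. For the forward direction, suppose $v = v_1 + \dotsb + v_n$ with $v_i \in V_{\mu_i}$. Then Lemma \ref{value} gives $f(\phi)v_i = f(\mu_i)v_i$, and each $\mu_i$ is a root of $f$ by construction, so $f(\phi)v = \sum_i f(\mu_i) v_i = 0$. This direction is immediate.

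For the reverse direction, assume $f(\phi)v = 0$; the goal is to write $v$ as a sum of $\mu_i$-eigenvectors. The key idea is a partial-fractions / Bezout decomposition: for each $j$ set $f_j(t) = \prod_{i\neq j}(t - \mu_i)$, so that $f(t) = (t-\mu_j)f_j(t)$. Because the $\mu_i$ are distinct, the polynomials $f_1,\dotsc,f_n$ have no common root in $\kk$ and are therefore coprime in $\kk[t]$, so there exist $g_1,\dotsc,g_n \in \kk[t]$ with
\[
    \sum_{j=1}^n g_j(t) f_j(t) = 1.
\]
Applying this identity to $\phi$ and evaluating at $v$ gives $v = \sum_j g_j(\phi) f_j(\phi) v$. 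Set $v_j = g_j(\phi)f_j(\phi)v$; then, using Lemma \ref{polynomial}(1) and the fact that polynomials in $\phi$ commute,
\[
    (\phi - \mu_j)v_j = g_j(\phi)\bigl((t-\mu_j)f_j(t)\bigr)(\phi)\,v = g_j(\phi)\, f(\phi)v = 0,
\]
so $v_j \in V_{\mu_j}$, and $v = \sum_j v_j \in \oplus_{i=1}^n V_{\mu_i}$, as required.

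There is no serious obstacle here: the only subtlety is justifying the partial-fractions identity $\sum_j g_j f_j = 1$, which follows from the Chinese Remainder Theorem (or equivalently from Bezout's identity applied inductively) together with the distinctness of the $\mu_i$ in $\kk$. Everything else is manipulation of commuting polynomial expressions in $\phi$ via Lemma \ref{polynomial}.
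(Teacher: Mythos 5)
Your proof is correct and is essentially identical to the paper's: both directions proceed the same way, and the reverse direction uses the same Bezout decomposition $\sum_j g_j f_j = 1$ with $f_j = f/(t-\mu_j)$, defining $v_j = g_j(\phi)f_j(\phi)v$ and checking $(\phi-\mu_j)v_j = g_j(\phi)f(\phi)v = 0$. The only difference is cosmetic notation (the paper calls your $g_j$'s $c_i$ and gives the linear factors the separate name $g_i = t-\mu_i$).
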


\begin{proof}
    Suppose first that $v\in\oplus_{i=1}^nV_{\mu_i}$. This means
    that $v=\sum_{i=1}^nv_i$, where each $v_i\in V_{\mu_i}$. By Lemma
    \ref{value}, since every $\mu_i$ is a root of $f$, we have that
    $f(\phi )v_i=0$ for all $i$. Consequently, $f(\phi )v=0$.

    Conversely, suppose $f(\phi )v=0$. Let $f_i=f/(t-\mu_i)$, for
    $i=1,\ldots,n$. Since there is no common factor of the polynomials $f_i$,
    there exist $c_i\in K[t]$ such that
    $c_1f_1+\ldots+c_nf_n=1$. Set $v_i=c_i(\phi )(f_i(\phi )v)$ for all $i$.
    Clearly, $v=\sum_{i=1}^nv_i$. Also, setting $g_i=t-\mu_i$, we see
    that $f_ig_i=f$ and hence
    $\phi v_i-\mu_iv_i=g_i(\phi )v_i=g_i(\phi )(c_i(\phi )(f_i(\phi )v))=c_i(\phi )(f(\phi )v)=c_i(\phi )0=0$.
    Thus, $\phi v_i-\mu_iv_i=0$, or equivalently, $\phi v_i=\mu_iv_i$. Hence
    $v_i\in V_{\mu_i}$ for all $i$. This shows that
    $v\in\oplus_{i=1}^nV_{\mu_i}$.
\end{proof}

\begin{corollary} \label{complete}
    If the $\mu_i$ and $f$ are as above, then $V=\oplus_{i=1}^nV_{\mu_i}$
    if and only if $f(\phi )=0$.\qed
\end{corollary}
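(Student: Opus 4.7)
The plan is to extract both implications directly from Lemma \ref{in}, which already handles the pointwise version of the statement. Once we have that equivalence for individual vectors, promoting it to the global equivalence about the whole space is essentially immediate.

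For the forward direction, I would assume $V=\oplus_{i=1}^nV_{\mu_i}$, take an arbitrary $v\in V$, and apply Lemma \ref{in} to conclude $f(\phi)v=0$. Since this holds for every $v\in V$, the endomorphism $f(\phi)$ is the zero map.

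For the converse, assume $f(\phi)=0$. Then for every $v\in V$ we have $f(\phi)v=0$, so Lemma \ref{in} gives $v\in\oplus_{i=1}^nV_{\mu_i}$. Thus $V\subseteq\oplus_{i=1}^nV_{\mu_i}$, and the reverse inclusion is trivial because each $V_{\mu_i}$ is by definition a subspace of $V$. Hence equality holds.

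There is no real obstacle here: the corollary is just the global reformulation of Lemma \ref{in}, obtained by quantifying over all $v\in V$. One minor point worth noting is that the direct sum $\oplus V_{\mu_i}$ is automatically a (well-defined) direct sum inside $V$ because the $\mu_i$ are distinct eigenvalues, so eigenspaces for distinct eigenvalues intersect trivially; this justifies writing the sum as a direct sum without further comment.
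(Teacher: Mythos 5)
Your proof is correct and follows exactly the route the paper intends: the corollary is the global version of Lemma \ref{in}, obtained by quantifying over all $v\in V$, and the paper accordingly leaves the proof implicit (marking it \qed with no further argument). Your brief remark that the sum of the $V_{\mu_i}$ is automatically direct since the $\mu_i$ are distinct is a reasonable clarification, though it is already built into the paper's notation from Lemma \ref{in}.
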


We will also need the following consequence of Lemma \ref{value}.

\begin{corollary} \label{image}
	Suppose $V=\oplus_{i=1}^nV_{\mu_i}$ and $g\in K[t]$. Then
	$g(\phi )V=\oplus_{i\in I_g}V_{\mu_i}$, where
	$I_g=\{i\in I=\{1,\ldots,n\}\mid g(\mu_i)\ne 0\}$.\qed
\end{corollary}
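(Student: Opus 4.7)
The plan is to apply Lemma \ref{value} componentwise to the given direct sum decomposition. Since $V=\oplus_{i=1}^n V_{\mu_i}$, any $v\in V$ decomposes uniquely as $v=\sum_{i=1}^n v_i$ with $v_i\in V_{\mu_i}$, so by linearity $g(\phi)v = \sum_{i=1}^n g(\phi)v_i$.

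By Lemma \ref{value}, $g(\phi)$ acts on $V_{\mu_i}$ as multiplication by the scalar $g(\mu_i)\in K$. Hence $g(\phi)v_i = g(\mu_i)v_i$, which lies in $V_{\mu_i}$ and equals $0$ precisely when $i\notin I_g$. This immediately gives the inclusion $g(\phi)V \subseteq \sum_{i\in I_g} V_{\mu_i}$.

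For the reverse inclusion, I would use that for $i\in I_g$ the scalar $g(\mu_i)$ is nonzero, and hence invertible in $K$. Thus for any $v_i\in V_{\mu_i}$, we have $v_i = g(\mu_i)^{-1} g(\phi) v_i = g(\phi)(g(\mu_i)^{-1}v_i) \in g(\phi)V$, so each $V_{\mu_i}$ with $i\in I_g$ is contained in $g(\phi)V$. Summing over $i\in I_g$ gives the reverse inclusion.

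Finally, the sum $\sum_{i\in I_g} V_{\mu_i}$ is direct because it is a subsum of the direct sum $\oplus_{i=1}^n V_{\mu_i}$. There is no real obstacle here; the statement is essentially a bookkeeping consequence of Lemma \ref{value}, and the only small point worth noting is the invocation of invertibility of nonzero scalars to obtain the reverse inclusion.
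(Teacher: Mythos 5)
Your proof is correct and is essentially the argument the paper has in mind when it stamps the corollary with $\qed$: apply Lemma \ref{value} componentwise to the direct-sum decomposition, note that the summands with $g(\mu_i)=0$ vanish, and recover the remaining summands using invertibility of nonzero scalars in $K$. (The reverse inclusion is in fact already asserted directly in Lemma \ref{value}, which states $f(\phi)V_\mu = V_\mu$ when $\mu$ is not a root of $f$, so you could cite it rather than rederive it, but that is a cosmetic point.)
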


We are now prepared to express properties (b) and (c) in terms
of relators.

\begin{lemma} \label{b}
    Let $f=\prod_{\theta\in\frak F-\{1\}}(t-\theta)$. An algebra $M$ from $\cal C$,
    that is known to satisfy condition (a) for $a$ and $b$, also
    satisfies condition (b) for these elements if and only if
    $f(\ad_{\hat m})(\hat x-\la \hat m,\hat x \ra \hat m)\in I_M$
    for all $\hat m=\hat a_i$, $i\leq k$, and all $\hat x\in\hat X$.
\end{lemma}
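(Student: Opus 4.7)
The plan is to translate condition (b) into a polynomial identity in $\ad(m)$ using Corollaries \ref{complete} and \ref{image}, and to use the Frobenius form via Propositions \ref{lem-form-sym} and \ref{lem-form-perp} to pin down primitivity of the $1$-eigenspace. Both implications will rely on $R$-linearity of $\ad(m)$ to pass from the generating set $\hat X$ to all of $M$, together with Lemma \ref{polynomial}(2) to transport identities between $\hat M$ and $M$.

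For the converse direction (relator $\Rightarrow$ (b)), I would read the membership $f(\ad(m))(x - \la m, x\ra m) \in I_M$ in $M$ as the assertion $f(\ad(m))x = f(\ad(m))(\la m, x\ra m) = f(1)\la m, x\ra m$ for every $x \in X$, and hence by $R$-linearity for all $x \in M$. Applying $\ad(m) - 1$ on the left and using idempotency $(\ad(m) - 1)m = m\cdot m - m = 0$ yields $g(\ad(m))x = 0$ on $M$, where $g = (t-1)f = \prod_{\theta \in \frak F}(t - \theta)$; Corollary \ref{complete} then gives the decomposition $M = \bigoplus_{\theta \in \frak F}M_\theta$, which is the semisimplicity and spectrum part of (b). Corollary \ref{image} identifies $f(\ad(m))M$ with $M_1$, and the displayed equation forces $M_1 \subseteq \la m\ra$; since $m \in M_1$ from idempotency, one obtains $M_1 = \la m\ra$, the required primitivity.

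For the forward direction ((b) $\Rightarrow$ relator), I would decompose an arbitrary $x \in M$ as $x = \alpha m + y$ with $y \in \bigoplus_{\theta \in \frak F - \{1\}}M_\theta$ using the direct sum supplied by (b). Lemma \ref{value} yields $f(\ad(m))y = 0$ and $f(\ad(m))m = f(1) m$, so $f(\ad(m))x = f(1)\alpha m$. Proposition \ref{lem-form-perp} gives $\la m, y\ra = 0$, so pairing against $m$ recovers $\alpha$ from $\la m, x\ra$ via the fixed normalization of $\la m, m\ra$ prescribed by (a'); comparing this with $f(\ad(m))(\la m, x\ra m) = f(1)\la m, x\ra m$ produces $f(\ad(m))(x - \la m, x\ra m) = 0$ in $M$, and Lemma \ref{polynomial}(2) translates this back into membership in $I_M = \ker\psi_M$ for the corresponding element of $\hat M$.

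The hard part will be checking that the coefficient $\la m, x\ra$ appearing in the relator matches the $M_1$-component of $x$ with no spurious scalar factor. The Frobenius condition (a') calibrates $\la m, m\ra$ precisely so that this identification is clean; without it the relator would need to be rescaled by a central-charge factor, so care is required to make the interplay between (a') and the relator exact.
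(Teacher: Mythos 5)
Your forward direction follows the paper's: decompose $x$ across the eigenspaces and use Lemma \ref{value} and the perpendicularity of eigenspaces to match the $M_1$-component with $\la m, x\ra m$. Your converse, however, takes a genuinely different and arguably cleaner route. The paper's converse stays with $m^\perp$: it observes that the elements $x - \la m, x\ra m$ for $x \in X$ span $m^\perp$, concludes $f(\ad(m))$ kills $m^\perp$, and invokes Lemma \ref{in} to get $m^\perp \subseteq M_{\frak F - \{1\}}$ (with the remark after the lemma supplying the passage to condition (b) via the complementation $M = \la m\ra \oplus m^\perp$). You instead hit the relator with $\ad(m) - 1$ to obtain the clean global identity $g(\ad(m)) = 0$ with $g = (t-1)f = \prod_{\theta \in \frak F}(t - \theta)$, invoke Corollary \ref{complete} for the full eigenspace decomposition, and then pin down $M_1 = \la m\ra$ via Corollary \ref{image} together with $f(\ad(m))M \subseteq \la m\ra$. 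This verifies every clause of (b) directly without detouring through the reformulation $m^\perp = M_{\frak F - \{1\}}$, and it has the advantage of not leaning on the complementation $M = \la m\ra \oplus m^\perp$ in the converse half. Your closing caveat about normalization is also apt: as written the relator $\hat x - \la\hat m,\hat x\ra\hat m$ only expresses the $M_1$-projection correctly when $\la m, m\ra = 1$, i.e. when $2\op{CC}(\frak F) = 1$; the paper passes silently from $n - \la m, n\ra m \in m^\perp$ without flagging that this computation divides by $\la m, m\ra$. The intended normalization matches in the $\frak V(4,3)$ application since $2 \cdot \tfrac{1}{2} = 1$, but for a general $\frak F$ the relator would indeed need the rescaling you describe.
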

\begin{proof}
	Suppose $M$ satisfies (b), that is, for $m\in\bar a$,
	every $n\in m^\perp$ is a sum of eigenvectors of $\ad(m)$
	with eigenvalues in $\frak F-\{1\}$.
	This means that $m^\perp\subseteq M_{\frak F-\{1\}}$.
	Lemma \ref{in} yields that $f(\ad(m))n=0$ for all
	$n\in m^\perp$.

	If $n$ is an arbitrary element of $M$ then $n-\la m,n \ra m\in m^\perp$.
	Hence $f(\ad(m))(n-\la m,n \ra m)=0$ for all $n\in M$. In particular,
	$f(\ad(m))(x-\la m,x \ra m)=0$ for all $m=a_i\in\bar a$ and all $x\in X$,
	where $X$ is, as before, the submagma in $M$ generated by the $a_i\in\bar a$.
	In terms of $\hat M$ this translates as the statement that
	$f(\ad_{\hat m})(\hat x-\la\hat m,\hat x \ra\hat m)\in I_M$
	for all $\hat m=\hat a_i$, $i\leq k$, and all $\hat x\in\hat X$.

	Conversely, suppose $f(\ad_{\hat m})(\hat x-\la\hat m,\hat x \ra\hat m)\in I_M$
	for all $\hat m=\hat a_i$,$i\leq k$, and all $\hat x\in\hat X$.
	This translates back to give us that $f(\ad(m))(x-\la m,x \ra m)=0$
	for all $m\in\bar a$ and all $x\in X$.
	Note that $X$ spans $M$ (since $\hat X$ spans $\hat M$) and, therefore,
	the elements $x-\la m,x \ra m$, for $x\in X$, span $m^\perp$.
	It follows that $f(\ad(m))n=0$ for all $n\in m^\perp$.
	According to Lemma \ref{in}, this means that
	$m^\perp\subseteq M_{\frak F-\{1\}}$,
	that is, every $n\in m^\perp$ is a sum of eigenvectors as in (b).
\end{proof}

Recall that the condition
$m^\perp\subseteq M_{\frak F-\{1\}}$
is equivalent to
$m^\perp=M_{\frak F-\{1\}}$.
This is because $m$ is contained in $M_{1}$, and so the
latter complements $m^\perp$.

We now turn to condition (c),
the restriction imposed by the fusion rules $\star$.
The following lemma takes care of the value of $\star$ on two fields.

Given an algebra $M$ from $\cal C$, pick $m\in\bar a$ and let
$\hat m$ be the corresponding marked element of $\hat M$, that
is, $\psi_M(\hat m)=m$. Let
$\emptyset\ne\{\theta_1,\ldots,\theta_n\}\subseteq\frak F$.
Let, as above,
$f=\prod_{f\in\frak F-\{1\}}(t-f)$ and write $f_\mu=f/(t-\mu)$.
Finally, set $g=\prod_{i=1}^n(t-\theta_i)$.
Let $\mu,\nu\in\frak F-\{1\}$ identify a case of
fusion rules, that is, an entry in the table.
(Note that $f_\mu,f_\nu,g\in\kk[t]$.)

\begin{lemma} \label{cell}
    Suppose $M$ satisfies (a') and (b) for all $a_i\in\bar a$ and suppose
    that $m$, $\hat m$, $\mu$, $\nu$, the $\theta_i$, $f_\mu$, $f_\nu$,
    and $g$ are as above. Then
        $$M_{\mu}(m) M_{\nu}(m) \subseteq\oplus_{i=1}^nM_{\theta_i}(m)$$
    if and only if
        $$g(\ad_{\hat m})((f_\mu(\ad_{\hat m})(x-(\hat m,x)\hat m))(f_\nu(\ad_{\hat m})(x'-(\hat m,x')\hat m)))\in I_M,$$
    for all $x,x'\in\hat X$.
\end{lemma}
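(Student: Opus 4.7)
The plan is to express condition (c) --- a subset containment of $M$-submodules --- as a polynomial identity via the spectral projector machinery of Corollaries \ref{complete} and \ref{image}, and then lift it through $\psi_M$ using Lemma \ref{polynomial}(2). Under (b), Lemma \ref{b} gives $m^\perp = \oplus_{\theta \in \frak F - \{1\}} M_\theta(m)$, so for any $x \in X$ the element $x - \la m, x\ra m$ lies in $m^\perp$. By Corollary \ref{image}, applying $f_\mu(\ad(m))$ --- which vanishes on every $M_\theta(m)$ with $\theta \in \frak F - \{1, \mu\}$ and acts as the nonzero scalar $f_\mu(\mu)$ on $M_\mu(m)$ --- projects this vector into $M_\mu(m)$. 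Similarly, the $f_\nu$ expression lies in $M_\nu(m)$, so the displayed product lies in $M_\mu(m) M_\nu(m)$; by Lemma \ref{in}, membership in $\oplus_{i=1}^n M_{\theta_i}(m)$ is equivalent to annihilation by $g(\ad(m))$.

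For the forward direction, I would assume (c), so $g(\ad(m))$ annihilates each such product in $M$. Lemma \ref{polynomial}(2) commutes $\psi_M$ past the polynomial expressions in the adjoints, and also transports the form value $\la \hat m, \hat x \ra$ to $\la m, x\ra$; hence vanishing in $M$ is precisely the assertion that the corresponding hatted element lies in $\ker \psi_M = I_M$.

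For the converse, the subtlety --- and the only real obstacle --- is that one must recover \emph{all} of $M_\mu(m) M_\nu(m)$, not merely the specific products arising from pairs in $\hat X$. Since $\hat X$ spans $\hat M$ over $\hat R$, the image $X$ spans $M$ over $R$, so $\{x - \la m, x\ra m : x \in X\}$ $R$-spans $m^\perp$. Because $f_\mu(\ad(m))$ is $R$-linear and, by Corollary \ref{image} with $f_\mu(\mu) \neq 0$, surjects $m^\perp$ onto $M_\mu(m)$, these images $R$-span $M_\mu(m)$; likewise for $M_\nu(m)$. Bilinearity of the algebra product then makes every element of $M_\mu(m) M_\nu(m)$ an $R$-linear combination of the hypothesised products, and $R$-linearity of $g(\ad(m))$ propagates annihilation from the generators to the whole submodule, so Lemma \ref{in} delivers (c). The assembly of these ingredients is routine once the translation via Lemma \ref{polynomial}(2) is set up.
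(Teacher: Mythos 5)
Your proof is correct and follows essentially the same path as the paper's: translate the membership condition to $M$ via Lemma \ref{polynomial}(2), use Lemma \ref{in} to convert annihilation by $g(\ad(m))$ into membership in $\oplus_i M_{\theta_i}(m)$, and invoke Corollary \ref{image} together with the fact that $\{x-\la m,x\ra m : x\in X\}$ spans $m^\perp$ to see that the hypothesised products span $M_\mu(m)M_\nu(m)$. Your explicit separation into forward and converse directions, with the spanning argument isolated as the key point of the converse, is a harmless reorganisation of the paper's single equivalence chain.
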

\begin{proof}
    The latter formula looks complicated, but it is in fact easy to decipher.
    First of all,
        $$g(\ad_{\hat m})((f_\mu(\ad_{\hat m})(x-(\hat m,x)\hat m))(f_\nu(\ad_{\hat m})(x'-(\hat m,x')\hat m)))\in I_M,$$
    for all $x,x'\in\hat X$, if and only if
        $$g(\ad(m))((f_\mu(\ad(m))(x-\la m,x \ra m))(f_\nu(\ad(m))(x'-\la m,x' \ra m)))=0,$$
    for all $x,x'\in X$, where, as before, $X$ is the submagma in $M$ generated by
    the $a_i\in\bar a$.

    Next we apply Lemma \ref{in}, which yields that, equivalently,
        $$(f_\mu(\ad(m))(x-\la m,x \ra m))(f_\nu(\ad(m))(x'-\la m,x' \ra m))\in\oplus_{i=1}^nM_{\theta_i}(m),$$
    for all $x,x'\in X$. Finally, we have already noticed that the elements
    $x-\la m,x \ra m$, where $x\in X$, span $m^\perp$. Hence, by Corollary \ref{image},
    the elements $f_\mu(\ad(m))(x-\la m,x \ra m)$, where $x\in X$, span $M_{\mu}$.
    (Here we use that $M$ satisfies condition (b) for $m$, and hence
    $m^\perp=M_{\frak F-\{1\}}$.)
    Similarly, the elements $f_\nu(\ad(m))(x'-\la m,x' \ra m)$, where $x'\in X$, span
    $M_{\nu}$. Hence the products
    $(f_\mu(\ad(m))(x-\la m,x \ra m))(f_\nu(\ad(m))(x'-\la m,x' \ra m))$ span $M_{\mu}M_{\nu}$.
    Thus, all the conditions above are equivalent simply to
        $$M_{\mu}M_{\nu} \subseteq\oplus_{i=1}^nM_{\theta_i},$$
    and this is exactly what the lemma claims.
\end{proof}

Set $f_{\lm,\mu} = \prod_{\nu\in\lm\star\mu}(t-\nu)$.
\begin{corollary} \label{c}
    Suppose that $M$ satisfies (a) and (b) for some $m\in\bar a$. Then it
    also satisfies (c) for $m$ if and only if 
    for all $\lm,\mu\in\frak F-\{1\}$, and $x,x'\in\hat X$,
    \begin{equation}
	   f_{\lm,\mu}(\ad(m))\bigl(
		(f_\lm(\ad(m))(x-\la m,x \ra m))
		(f_\mu(\ad(m))(x'-\la m,x' \ra m))
	   \bigr) \in I_M.
    \end{equation}
    Hence $M$ is a $\frak F$-axial algebra exactly when
    this set of equations is true for every $m\in\bar a$. \qed
\end{corollary}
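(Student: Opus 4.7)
The plan is to derive this corollary by running Lemma \ref{cell} once for each pair of non-identity fields, with the pairs involving $1$ handled automatically.

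First I would observe that condition (c) of Definition \ref{def-axis} asserts $A_f A_g \subseteq A_{f \star g}$ for every pair $f, g \in \frak F$, but the pairs where $f$ or $g$ equals $1$ contribute nothing new. Indeed, by condition (b) we have $A_1 = \la m \ra$, and for $y \in A_g$ one computes $m y = g y \in A_g$; since $1 \star g = \{g\}$ in our fusion rules (cf.\ Definition \ref{def-fusion-rules} and the Virasoro examples), this case is automatically consistent with the fusion table. Hence only the pairs $(\lambda, \mu) \in (\frak F - \{1\})^2$ need to be encoded as relators.

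Next, for each such pair, I would invoke Lemma \ref{cell} taking $\{\theta_1, \ldots, \theta_n\} = \lambda \star \mu$. The polynomial $g$ of that lemma then becomes precisely $f_{\lambda, \mu} = \prod_{\nu \in \lambda \star \mu}(t - \nu)$, and the lemma asserts the equivalence of $A_\lambda A_\mu \subseteq \bigoplus_{\nu \in \lambda \star \mu} A_\nu$ (the instance of (c) at this pair) with the displayed element of $\hat M$ lying in $I_M$ for all $x, x' \in \hat X$. Taking the conjunction over all $(\lambda, \mu) \in (\frak F - \{1\})^2$ gives the first claim of the corollary. One minor bookkeeping check: when $\lambda \star \mu = \emptyset$, the empty-product convention $f_{\lambda, \mu} = 1$ reduces the relator to the assertion that the product of a $\lambda$-eigenvector and a $\mu$-eigenvector is zero, which is exactly what (c) demands in this degenerate case.

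The final sentence, that $M$ is an $\frak F$-axial algebra precisely when these relator conditions hold for every $m \in \bar a$, is then immediate by quantifying over $m$ and combining with Lemmas \ref{a} and \ref{b}, which already translate (a') and (b) into membership of relators in $I_M$ and $J_M$. There is no real obstacle here; the substantive work was done in Lemma \ref{cell}, and the present argument is essentially bookkeeping, the only care being the treatment of the trivial $1$-eigenspace cases and the empty-fusion edge case.
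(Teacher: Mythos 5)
Your proof is correct and takes the same route the paper evidently intends: Corollary~\ref{c} is proved in the paper with a bare $\qed$, relying on the reader to see that it is just Lemma~\ref{cell} applied once per pair $(\lm,\mu)\in(\frak F-\{1\})^2$ with $\{\theta_i\}=\lm\star\mu$, and you spell out exactly that. Your two bookkeeping observations --- that the $1$-pairs are handled automatically because $A_1=\la m\ra$ makes multiplication by $m$ on $A_g$ scalar (which relies on the standing convention that $g\in 1\star g$, an assumption the paper also makes silently by restricting the corollary to $\lm,\mu\neq 1$), and that the empty-fusion case $\lm\star\mu=\emptyset$ is covered by the empty-product convention $f_{\lm,\mu}=1$ even though Lemma~\ref{cell} as stated requires a nonempty $\{\theta_i\}$ --- are the only places where anything needs saying, and you say the right things.
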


Combining Proposition \ref{presentation} with Lemmas \ref{a} and \ref{b} and
Corollary \ref{c} we finally arrive at the main result of this section.

\begin{theorem} \label{universal axial}
	The category $\cal M$ of Frobenius $\frak F$-axial algebras
	coincides with $\cal C_{U,V}$, where
	$V$ consists of the coefficient relators from Lemma {\rm\ref{a}} and $U$
	consists of all algebra relators from Lemmas {\rm\ref{a}}, {\rm\ref{b}}
	and Corollary {\rm\ref{c}}. In particular, $\tilde M=M(U,V)$ is the universal
	$\frak F$-axial algebra, of which all other $\frak F$-axial algebras are quotients.\qed
\end{theorem}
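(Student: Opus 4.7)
The plan is to collate the three translations from Lemma~\ref{a}, Lemma~\ref{b} and Corollary~\ref{c}, which respectively turn axioms (a'), (b) and (c) of an $\frak F$-axis into containment of explicit elements in the ideals $I_M$ and $J_M$, and then invoke Proposition~\ref{presentation} to extract the initial object. The theorem is essentially an assembly statement: the real content is in the preparatory results, and the proof amounts to verifying two containments of categories.

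First I would verify $\cal M\subseteq\cal C_{U,V}$. Let $M$ be a Frobenius $\frak F$-axial algebra on $n$ generators. By Definition~\ref{def-ax-alg} we may choose an $n$-tuple $\bar a=(a_1,\dotsc,a_n)$ of $\frak F$-axes that generate $M$; together with the associating bilinear form, this presents $M$ as an object of $\cal C$ with marked elements $\bar a$. Each $a_i$ satisfies conditions (a'), (b) and (c) of Definition~\ref{def-axis}, and the ``only if'' directions of Lemma~\ref{a}, Lemma~\ref{b} and Corollary~\ref{c} then place every relator of $U$ into $I_M$ and every relator of $V$ into $J_M$, so that $M\in\cal C_{U,V}$.

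For the reverse inclusion $\cal C_{U,V}\subseteq\cal M$, I take $M\in\cal C_{U,V}$ and apply the ``if'' directions of the same three results to each marked generator $a_i\in\bar a$: this shows that $a_i$ satisfies (a'), (b) and (c), hence is an $\frak F$-axis in $M$. Since $M$ is an object of $\cal C$, it is generated by $\bar a$, so $M$ is a Frobenius $\frak F$-axial algebra and lies in $\cal M$. Combining the two inclusions yields $\cal M=\cal C_{U,V}$.

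The final claim is then immediate from Proposition~\ref{presentation}: provided $\cal C_{U,V}$ is nonempty, $\tilde M=M(U,V)$ is its initial object and every object of $\cal C_{U,V}$ is a quotient of $\tilde M$. The one point requiring external input, which I expect to be the main (mild) obstacle, is checking nonemptiness, equivalently $J(U,V)\neq\hat R$. It suffices to exhibit a single Frobenius $\frak F$-axial algebra on $n$ generators; for the fusion rules of interest (e.g.\ $\frak V(4,3)$) the $(3C)$ example from Section~\ref{sec-ax}, or suitable direct sums of it with itself, serves as a concrete witness, and the construction extends to generic $\frak F$ whenever any $\frak F$-axis exists. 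Once this witness is in hand, $\tilde M=M(U,V)$ is the required universal $\frak F$-axial algebra.
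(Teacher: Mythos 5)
Your proof is correct and spells out exactly the combination the paper leaves implicit (the theorem carries the \emph{q.e.d.}\ symbol with only the prefatory sentence ``Combining Proposition~\ref{presentation} with Lemmas~\ref{a} and~\ref{b} and Corollary~\ref{c} we finally arrive at the main result of this section'' in lieu of a proof). Your two-inclusion argument, applied to each marked generator in the order (a'), then (b), then (c) so that the hypotheses of Lemma~\ref{b} and Corollary~\ref{c} are in force, is the right unpacking, and the appeal to Proposition~\ref{presentation} is exactly what the paper intends.

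You are also right to flag nonemptiness as a point the paper glosses over. A cleaner and fully general witness than the $(3C)$ example: take $M=\kk$ one-dimensional with all $n$ marked generators equal to $1$ and form $\la x,y\ra=2\op{CC}(\frak F)\,xy$. This is a Frobenius $\frak F$-axial algebra precisely when $1\in\frak F$ and $1\in 1\star 1$ --- and both conditions are already forced by the existence of any Frobenius $\frak F$-axial algebra at all, since for an axis $a$ we have $a=aa\in A_1A_1\subseteq A_{1\star 1}$, which must meet $A_1$ non-orthogonally because $\la a,a\ra=2\op{CC}(\frak F)\neq 0$, while $A_1\perp A_\theta$ for $\theta\neq 1$. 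So $\cal C_{U,V}$ is nonempty exactly when it should be, for arbitrary $\frak F$, without appealing to a fusion-rules-specific construction.
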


The universal $n$-generated Frobenius $\frak F$-axial algebra $\tilde M$
has additional symmetries.
For this we need a broader concept of automorphisms.
Given an automorphism $\phi$ of $R$ fixing the subfield $\kk$,
we define a {\em $\phi$-automorphism} $\psi$ of $M$
to be a bijective mapping $M\to M$ preserving addition and multiplication
and such that
$$(rm)^\psi=r^\phi m^\psi \text{ and }
\la m^\psi,n^\psi\ra=\la m,n \ra ^\phi,$$
for all $r\in R$ and $m,n\in M$. We will call $\phi$-automorphisms,
for various $\phi$, {\em semiautomorphisms} of $M$. Note that every
automorphism of $M$ is a semiautomorphism for $\phi=1$ and that
all semiautomorphisms form a group $\op{SAut} M$; it contains $\Aut M$ as a
normal subgroup.

The universal algebra $\tilde M$ possesses a special group of semiautomorphisms
arising from permutations of the marked elements $a_i\in\bar a$.
Take $\sg\in\Sym_k$.
We will first construct the action of $\sg$ on $\hat X$, $\hat R$ and $\hat M$
and then note that this action descends to  $\tilde M$
and the corresponding ring $\tilde R$.

The mapping $\hat a_i\mapsto\hat a_{i^\sg}$ clearly extends to an automorphism 
$\hat\psi_\sg$ of the free commutative magma $\hat X$. It also quite clearly 
preserves the equivalent relation $\sim$ on $\hat X\times\hat X$ and, hence, 
leads to a permutation of equivalence classes $[\hat x,\hat y]$. Since the 
equivalence classes parametrize the indeterminates $\lm_{[\hat x,\hat y]}$ 
of the polynomial ring $\hat R$, the permutation of the classes induces an 
automorphism $\hat\phi_\sg$ of $\hat R$. Since $\hat X$ is a basis of $\hat M$, 
there is only one way to extend $\hat\psi_\sg$ to the entire $\hat M$, namely,
$$\biggl(\sum_{\hat x\in\hat X}a_{\hat x}\hat x\biggr)^{\hat\psi_\sg}=
\sum_{\hat x\in\hat X}a_{\hat x}^{\hat\phi_\sg}\hat x^{\hat\psi_\sg}.$$
Manifestly, the resulting mapping $\hat\psi_\sg$
is a $\hat\phi_\sg$-automorphism as defined above.
It is also easy to see that the sets $U$ and $V$
from Proposition \ref{universal axial} are invariant
under $\hat\phi_\sg$ and $\hat\psi_\sg$, respectively.
Hence the matching ideals $I(U,V)$ and $J(U,V)$ are also invariant
and so $\hat\phi_\sg$ and $\hat\psi_\sg$ descend to automorphisms
$\phi_\sg$ and $\psi_\sg$ of $\tilde R$ and $\tilde M$, respectively. 
The map $\sg\mapsto\psi_\sg$ is an action of $\Sym_k$ on $\tilde M$. We will 
refer to the semiautomorphisms $\psi_\sg$ as the permutations of the 
marked elements.

The universal algebra $\tilde M$ is a natural object of interest.
In general, we would like a description of $\tilde M$,
starting with whether $\tilde M$ is finite-dimensional.
In all known cases it is.
There is one case where we have a complete description of $\tilde M$,
and it occupies our attention in the sequel.

\section{The $2$-generated Frobenius $\frak V(4,3)$-case}
\label{sec-two-gen}

For the rest of this paper, we consider the $2$-generated case
with the fusion rules $\frak V(4,3)$ over $\kk = \QQ$.
Hence $k=2$ and $\tilde M$ is the $2$-generated universal Frobenius $\frak V(4,3)$-axial algebra.
Recall also that, by Corollary \ref{cor-0-behaviour},
we have that $0\star0 = 0$,
which is a refinement of the rules given previously.

Let $M$ be a Frobenius $\frak V(4,3)$-axial algebra
with two marked generators $a_0$ and $a_1$.
For each of $a_0$, $a_1$, there is a Miyamoto involution
(as in Lemma \ref{prop-miyamoto-inv}),
written $\tau_0=\tau(a_0)$ and $\tau_1=\tau(a_1)$.
Inside $\Aut M$, $\tau_0$, $\tau_1$ generate a (finite or infinite) dihedral subgroup $T_0$.
Let $\rho=\tau_0\tau_1$.

For $i\in\ZZ$, set $a_{2i}={a_0}^{\rho^i}$ and $a_{2i+1}={a_1}^{\rho^i}$.
Every semiautomorphism takes a $\frak V(4,3)$-axis to a $\frak V(4,3)$-axis.
Hence all $a_j$ are $\frak V(4,3)$-axes.
We set $\tau_j=\tau(a_j)$.
Observe that $\tau_{2i}={\tau_0}^{\rho^i}$ and $\tau_{2i+1}={\tau_1}^{\rho^i}$,
where the exponential notation stands for (right) conjugation in $\Aut M$.
More generally, we have the following.

\begin{lemma} \label{action}
	For $i\in\ZZ$, we have ${a_i}^\rho=a_{i+2}$ and
	${\tau_i}^\rho=\tau_{i+2}$. Also, ${a_i}^{\tau_0}=a_{-i}$,
	${a_i}^{\tau_1}=a_{2-i}$ and, correspondingly,
	${\tau_i}^{\tau_0}=\tau_{-i}$, ${\tau_i}^{\tau_1}=\tau_{2-i}$.\qed
\end{lemma}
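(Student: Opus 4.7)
The plan is to reduce everything to two auxiliary facts about Miyamoto involutions, plus the standard identities in a dihedral group.

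First I would record the two general facts. (i) For any algebra automorphism $\psi$ of $M$ and any $\frak V(4,3)$-axis $a$, one has $\tau(a^\psi) = \tau(a)^\psi$. This is because $\psi$ carries the eigenspace decomposition of $\ad(a)$ to that of $\ad(a^\psi)$ preserving eigenvalues, and hence preserves the $\ZZ/2$-grading that defines the Miyamoto involution. (ii) Every Miyamoto involution $\tau(a)$ fixes $a$, since $a \in A_1 \subseteq A_{\frak V(4,3)_+}$ and so $\tau(a)$ acts as the identity on $a$. Granting (i), the four identities about the $\tau_j$ follow at once from the four identities about the $a_j$: for instance, $\tau_i^{\tau_0} = \tau(a_i)^{\tau_0} = \tau(a_i^{\tau_0}) = \tau(a_{-i}) = \tau_{-i}$, and similarly for the other three.

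So it remains to verify the three formulas ${a_i}^\rho = a_{i+2}$, ${a_i}^{\tau_0}=a_{-i}$, ${a_i}^{\tau_1}=a_{2-i}$. The first is immediate from the definition: if $i = 2j$ then ${a_{2j}}^\rho = ({a_0}^{\rho^j})^\rho = {a_0}^{\rho^{j+1}} = a_{2j+2}$, and the odd case is analogous.

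For the action of $\tau_0$ and $\tau_1$, I would use the standard dihedral relations $\tau_0 \rho \tau_0 = \tau_1 \rho \tau_1 = \rho^{-1}$ (both because $\tau_0^2 = \tau_1^2 = 1$ inside $T_0$), which inductively give $\rho^j \tau_0 = \tau_0 \rho^{-j}$ and $\rho^j \tau_1 = \tau_1 \rho^{-j}$. Combined with the base cases from (ii), namely $a_0^{\tau_0} = a_0$ and $a_1^{\tau_1} = a_1$, one derives the remaining base cases
\[
a_1^{\tau_0} \;=\; (a_1^{\tau_1})^{\tau_0} \;=\; a_1^{\tau_1\tau_0} \;=\; a_1^{\rho^{-1}} \;=\; a_{-1},
\qquad
a_0^{\tau_1} \;=\; (a_0^{\tau_0})^{\tau_1} \;=\; a_0^{\tau_0\tau_1} \;=\; a_0^\rho \;=\; a_2.
\]
Then for an even index, ${a_{2j}}^{\tau_0} = {a_0}^{\rho^j\tau_0} = {a_0}^{\tau_0\rho^{-j}} = {a_0}^{\rho^{-j}} = a_{-2j}$, and the three remaining computations (${a_{2j+1}}^{\tau_0}$, ${a_{2j}}^{\tau_1}$, ${a_{2j+1}}^{\tau_1}$) proceed identically, each time pushing $\tau_0$ or $\tau_1$ past $\rho^j$ and absorbing the result into the appropriate base case.

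There is no real obstacle here; the only thing to take care of is the bookkeeping on even vs.\ odd indices and the conjugation convention, which is why I would split into the even and odd cases explicitly rather than try to write a single uniform formula.
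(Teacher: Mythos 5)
Your argument is correct and is exactly the routine verification the paper has in mind (the lemma is stated with no written proof, only a \qed): the formula $a_i^\rho = a_{i+2}$ follows directly from the definitions, the reflection formulas follow from the dihedral relations $\rho\tau_j = \tau_j\rho^{-1}$ together with the base cases $a_0^{\tau_0} = a_0$, $a_1^{\tau_1} = a_1$, and the statements about the $\tau_i$ reduce to the statements about the $a_i$ via the standard identity $\tau(a^\psi) = \tau(a)^\psi$ for automorphisms $\psi$. No gaps; this is the intended proof.
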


There is only one semiautomorphism
coming from the action of $\Sym_2$ on the marked generators:
this involution will be referred to as the {\em flip}.
We will use the bar notation for the flip
for both its action on the ring and on the algebra.

The flip semiautomorphism of $M$ swaps $a_0$ and $a_1$ and hence
interchanges $\tau_0$ and $\tau_1$. This means that the flip normalizes
the dihedral group $T_0$ and thus induces on it an automorphism.
In particular, we can form a possibly larger dihedral
group $T$ generated by $\tau_0$ and the flip.
Then $T_0$ is a subgroup of index at most two in $T$.

The groups $T_0$ and $T$ have their counterparts at the level of
$\hat M$. Let $\hat T$ be the infinite dihedral group that acts on
$\ZZ$ and is generated by the reflections $\hat\tau_0\colon i\mapsto -1$ and
$\hat f\colon i\mapsto 1-i$. Also consider $\hat\tau_1\colon i\mapsto 2-i$.
Clearly, the mapping from $\hat T$ to $T$ sending $\hat\tau_0$ to
$\tau_0$ and $\hat f$ to the flip is a homomorphism of groups.
Furthermore, this homomorphism takes $\hat\tau_i$ to $\tau_i$.
Accordingly, we set $\hat T_0=\la\hat\tau_0,\hat\tau_1\ra$ and we have
that the above homomorphism maps $\hat T_0$ onto $T_0$.

When $k$ is odd, $\hat T_0$ acts transitively on all unordered pairs
$\{i,j\}$ for which $|i-j|=k$. When $k$ is even, $\hat T_0$ has two orbits
on the set of pairs $\{i,j\}$ for which $|i-j|=k$. One orbit consists of
those pairs where $i$ and $j$ are even, and the other consists of
the pairs where $i$ and $j$ are odd. Note that $\hat f$ interchanges
these two orbits.
Clearly, if $\{i,j\}$ and $\{i',j'\}$ are in the same $\hat T_0$-orbit
then $\{a_i,a_j\}$ and $\{a_{i'},a_{j'}\}$ are in the same $T_0$-orbit.
This leads to the following result.

\begin{lemma} \label{lambdas}
	The value of $\la a_i,a_j\ra$ depends only on $k=|i-j|$, if $k$ is odd.
	If $k$ is even then $\la a_i,a_j\ra$ depends only on $k$ and, possibly,
	on the parity of $i$ (which is the same as the parity of $j$).
\end{lemma}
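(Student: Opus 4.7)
The plan is to exploit the fact that $T_0 = \la\tau_0,\tau_1\ra$ sits inside $\Aut M$ (not just $\op{SAut} M$), so that its elements preserve the bilinear form strictly, i.e.\ $\la x^g,y^g\ra = \la x,y\ra$ for every $g\in T_0$ and every $x,y\in M$. This is exactly what is needed to move the question about values of $\la a_i,a_j\ra$ onto a question about $T_0$-orbits on unordered pairs $\{a_i,a_j\}$.

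First I would record that, by Lemma \ref{action}, the action of $T_0$ on the set $\{a_i\}_{i\in\ZZ}$ factors through the action of $\hat T_0$ on $\ZZ$ via $\hat\tau_0\mapsto\tau_0$, $\hat\tau_1\mapsto\tau_1$. Consequently, whenever $\{i,j\}$ and $\{i',j'\}$ lie in the same $\hat T_0$-orbit on unordered pairs of integers, the pairs $\{a_i,a_j\}$ and $\{a_{i'},a_{j'}\}$ lie in the same $T_0$-orbit on unordered pairs of axes, and therefore $\la a_i,a_j\ra = \la a_{i'},a_{j'}\ra$ since $T_0\le\Aut M$.

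Next I would invoke the orbit description stated immediately before the lemma: on unordered pairs $\{i,j\}\subseteq\ZZ$ with $|i-j|=k$, the group $\hat T_0$ is transitive when $k$ is odd, and has exactly two orbits when $k$ is even, distinguished by the common parity of $i$ and $j$. Combining this with the previous paragraph yields the two claims of the lemma: for odd $k$, $\la a_i,a_j\ra$ is a single value depending only on $k$; for even $k$, it can take at most two values, one for even pairs and one for odd pairs, each depending only on $k$ and that parity.

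There is essentially no obstacle here beyond making the transfer from $\hat T_0$-orbits on $\ZZ$ to $T_0$-orbits on axes, which is immediate from Lemma \ref{action}. The only subtle point worth flagging is that one must use $T_0$ (genuine automorphisms) rather than the flip (a semiautomorphism twisting the coefficients by $\phi$), since a semiautomorphism would only give $\la a_i,a_j\ra = \la a_{i'},a_{j'}\ra^{\phi^{-1}}$, which is weaker; this is precisely why the parity ambiguity remains in the even case, as $\hat f$ is the element that would merge the two even-$k$ orbits but is not available inside $T_0$.
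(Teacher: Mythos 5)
Your proof is correct and follows the same route as the paper's (very terse) argument: reduce to $T_0$-orbits on unordered pairs of axes via Lemma \ref{action}, invoke the orbit description of $\hat T_0$ on pairs of integers given just before the lemma, and use that elements of $T_0$ are genuine automorphisms preserving the form. Your closing remark — that only $T_0\le\Aut M$ and not the flip (a semiautomorphism twisting coefficients) can be used, which is exactly why the even-$k$ parity ambiguity survives — is a helpful amplification of what the paper leaves implicit, but it is not a new idea; the paper addresses this next in Lemma \ref{flipped lambdas}.
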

\begin{proof}
	The value of the invariant form is preserved by the action of $T_0$,
	so the claim follows from the remarks before the lemma.
\end{proof}

Adding to the above the action of the flip allows us to be a little
more specific.

\begin{lemma}
	\label{flipped lambdas}
	If $i-j$ is odd then $\overline{\la a_i,a_j\ra}=\la a_i,a_j\ra$.
	If $i-j$ is even then $\overline{\la a_i,a_j\ra}=\la a_{i+1},a_{j+1}\ra$.
\end{lemma}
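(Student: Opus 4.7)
The plan is to reduce the claim to Lemma \ref{lambdas} by first computing how the flip acts on the whole family $\{a_j\}_{j\in\ZZ}$ of indexed axes. The semiautomorphism property immediately gives $\overline{\la a_i,a_j\ra}=\la\overline{a_i},\overline{a_j}\ra$, so everything comes down to identifying $\overline{a_j}$.

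The key computation is that $\overline{a_j}=a_{1-j}$ for every $j\in\ZZ$. To see this, note that by construction the flip interchanges $a_0$ and $a_1$, and hence interchanges $\tau_0$ and $\tau_1$, so it inverts $\rho=\tau_0\tau_1$: $\overline{\rho}=\tau_1\tau_0=\rho^{-1}$. Applying this to the definitions $a_{2i}=a_0^{\rho^i}$ and $a_{2i+1}=a_1^{\rho^i}$ (together with the fact that a $\phi$-automorphism respects conjugation), one obtains $\overline{a_{2i}}=a_1^{\rho^{-i}}=a_{1-2i}$ and $\overline{a_{2i+1}}=a_0^{\rho^{-i}}=a_{-2i}$, which is exactly the formula $\overline{a_j}=a_{1-j}$ in both parities.

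With this in hand, $\overline{\la a_i,a_j\ra}=\la a_{1-i},a_{1-j}\ra$, and we split on the parity of $k=i-j$. If $k$ is odd, then $(1-i)-(1-j)=j-i$ is also odd with the same absolute value, and Lemma \ref{lambdas} says the form depends \emph{only} on $|k|$ in the odd case; hence $\la a_{1-i},a_{1-j}\ra=\la a_i,a_j\ra$. If $k$ is even, the difference $(1-i)-(1-j)$ again has the same absolute value $|k|$, but the common parity of the indices $1-i,1-j$ is opposite to that of $i,j$, and is the same as the parity of $i+1,j+1$; Lemma \ref{lambdas} then yields $\la a_{1-i},a_{1-j}\ra=\la a_{i+1},a_{j+1}\ra$, which is the second claim.

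The only real step with content is verifying $\overline{a_j}=a_{1-j}$, and the main subtlety there is making sure the flip, as a semiautomorphism, really does conjugate the indexed axes by the inverted dihedral action (as opposed to, say, acting trivially on the indexing). After that, the two cases are a direct application of Lemma \ref{lambdas}, with no further calculation.
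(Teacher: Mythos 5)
Your proof is correct and follows essentially the same route as the paper's, whose one-line proof invokes the earlier orbit analysis for the infinite dihedral group $\hat T$ acting on $\ZZ$ by reflections. The content of that appeal is precisely the formula $\overline{a_j}=a_{1-j}$, which you verify directly via $\overline{\rho}=\rho^{-1}$; after that, both arguments reduce the claim to Lemma \ref{lambdas} by noting that $\hat f\colon i\mapsto 1-i$ preserves $|i-j|$ but flips the common parity of the indices when $i-j$ is even.
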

\begin{proof}
	This follows since $\hat f$ (which maps to the flip) preserves the
	only orbit for $k=i-j$ odd and it interchanges the two orbits for
	$k$ even.
\end{proof}

It follows from these lemmas that, when $k$ is odd, there exists
$\nu_k\in R$, such that $\la a_i,a_{i+k}\ra=\nu_k$ for all $i\in\ZZ$.
Furthermore, $\bar\nu_k=\nu_k$. Similarly, for $k$ even, there exist
$\nu_k^e,\nu_k^o\in R$, such that $\la a_i,a_{i+k}\ra=\nu_k^e$ or
$\nu_k^o$ depending on whether $i$ is even or odd, and
$\bar\nu_k^e=\nu_k^o$. By $\frak F$-axis condition (a'), $\nu_0^e=\nu_0^o=1$.

We will now define some special elements of $M$ that are invariant
under the action of $T_0$.
We will need the following observation,
which was used to great effect in \cite{sakuma}.

\begin{lemma} \label{sigma}
	If $m$ and $n$ are two $\frak V(4,3)$-axes of $M$ then
	$\sg(m,n)=mn-\frac{1}{32}(m+n)$ is invariant under both $\tau(m)$ and
	$\tau(n)$.
\end{lemma}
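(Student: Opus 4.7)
The plan is to use the eigenspace decomposition of $\ad(m)$ together with the $\ZZ/2$-grading $\frak V(4,3)_+=\{1,0,\frac{1}{4}\}$, $\frak V(4,3)_-=\{\frac{1}{32}\}$. By Proposition \ref{prop-miyamoto-inv}, $\tau(m)$ acts as $+1$ on $A_{\frak V(4,3)_+}(m)=A_1\oplus A_0\oplus A_{1/4}$ and as $-1$ on $A_{\frak V(4,3)_-}(m)=A_{1/32}$. So to show $\sg(m,n)^{\tau(m)}=\sg(m,n)$, it suffices to show that $\sg(m,n)$ has no component in $A_{1/32}(m)$.

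To carry this out, I decompose $n$ with respect to $m$ as
\[ n = \al m + n_0 + n_{1/4} + n_{1/32}, \]
with $\al\in R$ and $n_\lm\in A_\lm(m)$. Applying $\ad(m)$, which kills $n_0$ and scales each $n_\lm$ by $\lm$, gives
\[ mn = \al m + \tfrac{1}{4}n_{1/4} + \tfrac{1}{32}n_{1/32}. \]
Therefore
\[ \sg(m,n) = mn - \tfrac{1}{32}(m+n) = \bigl(\al - \tfrac{1}{32} - \tfrac{\al}{32}\bigr)m - \tfrac{1}{32}n_0 + \bigl(\tfrac{1}{4}-\tfrac{1}{32}\bigr)n_{1/4}, \]
since the contributions $\tfrac{1}{32}n_{1/32}$ from $mn$ and $-\tfrac{1}{32}n_{1/32}$ from $-\tfrac{1}{32}n$ cancel exactly. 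Hence $\sg(m,n)\in A_{\frak V(4,3)_+}(m)$, and so $\tau(m)$ fixes $\sg(m,n)$.

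The expression $\sg(m,n) = mn - \tfrac{1}{32}(m+n)$ is manifestly symmetric in $m$ and $n$ (using commutativity of the algebra product), so $\sg(m,n) = \sg(n,m)$. The identical argument, with the roles of $m$ and $n$ interchanged, then shows that $\tau(n)$ fixes $\sg(m,n)$ as well. There is no real obstacle here: the content of the lemma is exactly that the coefficient $\tfrac{1}{32}$ is chosen to match the unique field in $\frak V(4,3)_-$, so that the sign-changed component of $n$ under $\tau(m)$ is precisely cancelled; the rest is a one-line calculation using the defining property of eigenvectors.
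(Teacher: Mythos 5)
Your proof is correct and takes essentially the same approach as the paper's: decompose $n$ into eigenspaces of $\ad(m)$, compute $mn$, and observe that the $\tfrac{1}{32}$-component cancels exactly in $mn - \tfrac{1}{32}(m+n)$, leaving an element of $A_{\frak V(4,3)_+}(m)$ fixed by $\tau(m)$; the symmetry in $m,n$ handles $\tau(n)$. The only cosmetic difference is that the paper identifies the coefficient of $m$ as $\la m,n\ra$ (using perpendicularity of eigenspaces and $\la m,m\ra=1$), whereas you keep it as an unspecified $\al\in R$, which is sufficient for the lemma.
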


\begin{proof}
	By symmetry it suffices to show that $\sg(m,n)$ is invariant
	under $\tau=\tau(m)$.

	Recall that $n-\la n,m \ra m\in m^\perp$ and so
	$n-\la m,n \ra m=\al(n)+\bt(n)+\gm(n)$, where the three summands belong
	to $M_{0}$, $M_{\frac{1}{4}}$, and $M_{\frac{1}{32}}$,
	respectively. Recall also that $\tau$ acts as identity on
	$M_+=M_{1}\oplus M_{0}\oplus M_{\frac{1}{4}}$. Since
	$n=\la m,n \ra m+\al(n)+\bt(n)+\gm(n)$, we have that
	$mn=\ad_mn=\la m,n \ra m+\frac{1}{4}\bt(n)+\frac{1}{32}\gm(n)$. Hence
	$\sg(m,n)=nm-\frac{1}{32}(m+n)=
	\frac{1}{32}(31\la m,n \ra -1)m-\frac{1}{32}\al(n)+\frac{7}{32}\bt(n)$ is
	fully contained in $M_+$, and so it is fixed by $\tau$.
\end{proof}

Define $\sg_1=\sg(a_0,a_1)$.

\begin{lemma} \label{sigma one}
	We have that $\sg_1$ is fixed by $T$. In particular,
	$\sg(a_i,a_{i+1})=\sg_1$ for all $i\in\ZZ$.
\end{lemma}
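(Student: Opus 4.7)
The plan is to verify $T$-invariance of $\sigma_1$ on the three types of generators of $T$ in turn, and then deduce the statement about $\sigma(a_i, a_{i+1})$ by transport.

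First I would observe that $\sigma(m,n) = mn - \frac{1}{32}(m+n)$ is manifestly symmetric in its two arguments, so the flip (which swaps $a_0$ and $a_1$ and commutes with scalar multiplication, since it is a $\phi$-automorphism with $\phi$ fixing $\mathbb{Q}$) sends $\sigma_1 = \sigma(a_0,a_1)$ to $\sigma(a_1,a_0) = \sigma_1$. Next, Lemma \ref{sigma} applied with $m = a_0$, $n = a_1$ gives immediately that $\sigma_1$ is fixed by both $\tau_0 = \tau(a_0)$ and $\tau_1 = \tau(a_1)$. Since $T_0 = \langle \tau_0, \tau_1\rangle$ and the flip together generate $T$ (recall $T = \langle \tau_0, f\rangle$ and $\tau_1 = \tau_0^{f}$ lies in $T$), this establishes that $\sigma_1$ is fixed by all of $T$.

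For the second assertion, I would use the fact that $\rho = \tau_0\tau_1 \in T_0$ fixes $\sigma_1$, together with the compatibility $\sigma(m,n)^\psi = \sigma(m^\psi, n^\psi)$ for any automorphism $\psi$ (which follows from linearity and the fact that $\psi$ preserves multiplication). Applying $\rho^i$ to $\sigma_1$ and using Lemma \ref{action} to track the action on axes gives $\sigma_1 = \sigma_1^{\rho^i} = \sigma(a_{2i}, a_{2i+1})$ for every $i \in \mathbb{Z}$, which settles the even-indexed case. For the odd-indexed case, I would apply $\tau_0$ first: $\sigma_1 = \sigma_1^{\tau_0} = \sigma(a_0^{\tau_0}, a_1^{\tau_0}) = \sigma(a_0, a_{-1}) = \sigma(a_{-1}, a_0)$ by symmetry of $\sigma$, and then transport by powers of $\rho$ to cover all pairs $\{a_{2i-1}, a_{2i}\}$.

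There is no real obstacle here: the statement is essentially an orbit computation, and the substantive content has already been extracted into Lemma \ref{sigma} (which provides the $\tau_0$- and $\tau_1$-invariance) and Lemma \ref{action} (which describes the $T_0$-action on axes). The only point to be slightly careful about is that the flip is a semiautomorphism rather than an honest automorphism, so one must check it commutes with the formation of $\sigma(a_0, a_1)$; but because $\frac{1}{32} \in \mathbb{Q} = \mathbf{k}$ and the coefficient homomorphism of the flip fixes $\mathbf{k}$, the expression $a_0 a_1 - \frac{1}{32}(a_0 + a_1)$ is preserved.
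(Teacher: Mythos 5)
Your proposal is correct and follows essentially the same approach as the paper: Lemma \ref{sigma} gives $\tau_0$- and $\tau_1$-invariance, the symmetry of $\sigma$ in its arguments (together with the flip fixing the scalars in $\mathbf{k}$) gives flip-invariance, and the second assertion is then an orbit computation under $T$. The paper phrases the last step as ``the transitivity of $\hat T$'' whereas you trace the orbits explicitly via $\rho$ and $\tau_0$, but the content is the same.
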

\begin{proof}
	By Lemma \ref{sigma}, $\sg_1$ is invariant under $\tau_0$ (and
	$\tau_1$). It is also invariant under the flip,
	which swaps $a_0$ and $a_1$, and fixes the rationals.
	Therefore $\sg_1$ is invariant under $T$.
	The second claim follows by the transitivity of $\hat T$.
\end{proof}

Equivalently, we can write that
$a_ia_{i+1}=\sg_1+\frac{1}{32}(a_i+a_{i+1})$ for all $i$. Two
further invariant elements come from distance two. Namely, define
$\sg_2^e=\sg(a_0,a_2)$ and $\sg_2^o=\sg(a_{-1},a_1)$.

\begin{lemma} \label{sigma two}
	The elements $\sg_2^e$ and $\sg_2^o$ are invariant under the action
	of $T_0$ and they are interchanged by the flip. Moreover,
	$\sg(a_i,a_{i+2})=\sg_2^e$ or $\sg_2^o$ depending on whether $i$ is even
	or odd.
\end{lemma}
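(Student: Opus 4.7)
The plan is to push through the same style of argument used for $\sigma_1$ in Lemma \ref{sigma one}, but now paying attention to the fact that $\hat T_0$ has two orbits on pairs $\{i,j\}$ at distance~$2$ (one on even, one on odd), so there are two invariant elements rather than one.

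First I would establish $T_0$-invariance of $\sigma_2^e = \sigma(a_0, a_2)$. By Lemma~\ref{sigma}, $\sigma_2^e$ is already fixed by $\tau_0$ and $\tau_2$. The key extra observation is that $\tau_1$ acts on the marked axes by $a_i \mapsto a_{2-i}$ (by Lemma~\ref{action}), so $\tau_1$ swaps $a_0$ and $a_2$; since $\sigma$ is symmetric in its arguments, $\tau_1$ fixes $\sigma_2^e$ as well. As $T_0 = \langle\tau_0,\tau_1\rangle$, this gives $\sigma_2^e \in M^{T_0}$. In particular $\rho = \tau_0\tau_1 \in T_0$ fixes $\sigma_2^e$, and since $\rho$ sends $a_j$ to $a_{j+2}$ we obtain $\sigma(a_{2k}, a_{2k+2}) = \sigma(a_0, a_2)^{\rho^k} = \sigma_2^e$ for all $k \in \ZZ$.

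The argument for $\sigma_2^o = \sigma(a_{-1}, a_1)$ is identical up to a shift: Lemma~\ref{sigma} gives invariance under $\tau_{-1}$ and $\tau_1$, while $\tau_0: a_i \mapsto a_{-i}$ swaps $a_{-1}$ and $a_1$ and hence fixes $\sigma_2^o$ by symmetry of $\sigma$. Thus $\sigma_2^o$ is fixed by $\langle\tau_0,\tau_1\rangle = T_0$, and applying $\rho^k$ yields $\sigma(a_{2k-1}, a_{2k+1}) = \sigma_2^o$ for every $k$.

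Finally, for the interchange under the flip, note that the flip acts on $\hat T$ (equivalently on indices) as $\hat f: i \mapsto 1-i$, so it sends $a_0 \mapsto a_1$ and $a_2 \mapsto a_{-1}$. As the flip is a semiautomorphism, it maps $\sigma(a_0,a_2) = \sigma_2^e$ to $\sigma(a_1, a_{-1}) = \sigma_2^o$, and symmetrically $\sigma_2^o \mapsto \sigma_2^e$. The only real subtlety in this proof is checking the correct index action of each generator, which amounts to applying Lemma~\ref{action}; everything else is mechanical.
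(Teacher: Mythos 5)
Your proof is correct and follows essentially the same approach as the paper: apply Lemma \ref{sigma} for invariance under $\tau_0$ (resp.\ $\tau_{\pm1}$), observe via Lemma \ref{action} that the remaining generator of $T_0$ swaps the two axes and hence fixes $\sigma(\cdot,\cdot)$ by symmetry, and compute the flip on indices to see it interchanges $\sigma_2^e$ and $\sigma_2^o$. The only cosmetic difference is that you make the ``moreover'' step explicit by applying $\rho^k$, whereas the paper simply says the final claim follows as in the previous lemma.
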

\begin{proof}
	By Lemma \ref{sigma}, $\sg_2^e$ is invariant under $\tau_0$.
	Also,
	$(\sg_2^e)^{\tau_1}=\sg(a_0,a_2)^{\tau_1}=\sg(a_0^{\tau_1},a_2^{\tau_1})=\sg(a_2,a_0)=\sg(a_0,a_2)=\sg_2^e$.
	So $\sg_2^e$ is also invariant under $\tau_1$, and hence it is fixed
	by the entire $T_0$. Similarly, $T_0$ fixes $\sg_2^o$. Clearly, the
	flip interchanges $\sg_2^e$ and $\sg_2^o$. The final claim follows as
	before.
\end{proof}

\section{The multiplication table}
\label{sec-cal}

The first five lemmas here reprove,
for our situation, computations from \cite{ipss}.
These determine products in the specialised $\frak V(4,3)$-case
of the $\tilde M$ construction of Section \ref{sec-uni-ax};
here, we will simply call the universal object $U$.
The proof of Theorem \ref{thm-sakuma} is grounded in the computations
that we undertake in this section.

Since $\nu_1$ frequently appears in this text, we give it a shorter name:
let $\lm=\nu_1$. Note that $\bar\lm=\lm$.

\begin{lemma}
	\label{formulas}
	We write $a_1=\lm a_0+\al_1+\bt_1+\gm_1$ for the decomposition of $a_1$
	into $1$,$0$,$\frac{1}{4}$ and $\frac{1}{32}$-eigenvectors respectively. The following hold:
	\begin{align}
		\label{eq-al}
		\al_1&=-4\sg_1+(3\lm-\frac{1}{8})a_0+\frac{7}{16}(a_1+a_{-1}), \\
		\label{eq-bt}
		\bt_1&=4\sg_1-(4\lm-\frac{1}{8})a_0+\frac{1}{16}(a_1+a_{-1}), \\
		\label{eq-gm}
		\gm_1&=\frac{1}{2}(a_1-a_{-1}).
	\end{align}
\end{lemma}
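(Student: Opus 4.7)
The plan is to use the Miyamoto involution $\tau_0$ together with the definition of $\sigma_1$ to extract the three components of $a_1 - \lambda a_0 \in a_0^\perp$ one by one.

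First I would compute $\gamma_1$. By Proposition \ref{prop-miyamoto-inv}, $\tau_0$ fixes $A_1 \oplus A_0 \oplus A_{1/4}$ pointwise and negates $A_{1/32}$, so applying $\tau_0$ to $a_1 = \lambda a_0 + \alpha_1 + \beta_1 + \gamma_1$ gives $a_1^{\tau_0} = \lambda a_0 + \alpha_1 + \beta_1 - \gamma_1$. By Lemma \ref{action}, $a_1^{\tau_0} = a_{-1}$, so subtracting yields $a_1 - a_{-1} = 2\gamma_1$, establishing \eqref{eq-gm}. Adding the same two equations gives the auxiliary relation
\begin{equation*}
    a_1 + a_{-1} = 2\lambda a_0 + 2\alpha_1 + 2\beta_1,
\end{equation*}
which will be used below.

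Next I would compute $a_0 a_1$ in two independent ways. On the one hand, by the definition $\sigma_1 = a_0 a_1 - \frac{1}{32}(a_0 + a_1)$ we have $a_0 a_1 = \sigma_1 + \frac{1}{32}(a_0 + a_1)$. On the other hand, applying $\ad(a_0)$ to the eigenvector decomposition of $a_1$, and using that $a_0$ is an $\frak F$-axis with $a_0^2 = a_0$, gives
\begin{equation*}
    a_0 a_1 = \lambda a_0 + 0 \cdot \alpha_1 + \tfrac{1}{4} \beta_1 + \tfrac{1}{32}\gamma_1.
\end{equation*}
Equating the two and substituting $a_1 = \lambda a_0 + \alpha_1 + \beta_1 + \gamma_1$ on the right of the first expression, the $\gamma_1$-terms cancel and we obtain a single linear relation of the form
\begin{equation*}
    32 \sigma_1 = (31\lambda - 1)\, a_0 - \alpha_1 + 7 \beta_1.
\end{equation*}

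Finally, this equation together with the auxiliary relation $\alpha_1 + \beta_1 = \frac{1}{2}(a_1 + a_{-1}) - \lambda a_0$ is a $2 \times 2$ linear system in the unknowns $\alpha_1$ and $\beta_1$ (the determinant is $8$, which is invertible in $\QQ \subseteq R$). Solving it directly — adding the two equations to get $8\beta_1$ and taking the appropriate combination to get $8\alpha_1$ — yields \eqref{eq-bt} and \eqref{eq-al}. There is no real obstacle beyond bookkeeping: the only subtle input is the identification $a_1^{\tau_0} = a_{-1}$ from Lemma \ref{action} and the identity $a_0 a_1 = \sigma_1 + \frac{1}{32}(a_0 + a_1)$, both already in hand; the rest is a routine calculation which I will not grind through here.
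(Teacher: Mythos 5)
Your proof is correct and essentially follows the paper's argument: apply $\tau_0$ (using $a_1^{\tau_0}=a_{-1}$) to isolate $\gamma_1$, then equate the two expressions for $a_0a_1$ (one from the eigenvector decomposition, one from the definition of $\sigma_1$) to determine $\beta_1$, with $\alpha_1$ following by linear elimination. The only cosmetic difference is that the paper solves directly for $\tfrac14\beta_1$ using the already-known $\gamma_1$ rather than setting up a $2\times 2$ system in $(\alpha_1,\beta_1)$; both are the same routine computation. (Minor slip: the determinant of your system $\bigl(\begin{smallmatrix}-1 & 7\\ 1 & 1\end{smallmatrix}\bigr)$ is $-8$, not $8$, but of course this is still invertible.)
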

\begin{proof}
	Applying $\tau_0$ to both sides of the equality
	$a_1=\lm a_0+\al_1+\bt_1+\gm_1$, we get
	$a_{-1}=\lm a_0+\al_1+\bt_1-\gm_1$.
	(Hence, using analogous notation,
	$\al_{-1}=\al_1$, $\bt_{-1}=\bt_1$ and $\gm_{-1}=-\gm_1$.)
	Hence $\gm_1=\frac{1}{2}(a_1-a_{-1})$, proving \eqref{eq-gm}.

	So $a_0 a_1=\lm a_0+\frac{1}{4}\bt_1+\frac{1}{32}\gm_1=
	\lm a_0+\frac{1}{4}\bt_1+\frac{1}{64}(a_1-a_{-1})$. Solving this for
	$\frac{1}{4}\bt_1$ and using $a_0 a_1=\sg_1+\frac{1}{32}(a_0+a_1)$,
	we get $\frac{1}{4}\bt_1=
	\sg_1-\lm a_0-\frac{1}{64}(a_1-a_{-1})+\frac{1}{32}(a_0+a_1)=
	\sg_1-(\lm -\frac{1}{32})a_0+\frac{1}{64}(a_1+a_{-1})$, yielding \eqref{eq-bt}.
	Result \eqref{eq-al} now follows by substitution of \eqref{eq-bt} and \eqref{eq-gm}
	into $a_1=\lm a_0+\al_1+\bt_1+\gm_1$.
\end{proof}

\begin{lemma}
	\label{times sigma}
	We have $a_0\sg_1=\frac{7}{32}\sg_1+\left(\frac{3}{4}\lm-\frac{25}{2^{10}}\right)a_0+
	\frac{7}{2^{11}}(a_{-1}+a_1)$.
\end{lemma}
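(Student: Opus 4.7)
The plan is to reduce everything to an eigenspace decomposition of $a_1$ with respect to $a_0$ and then simplify. The key observation is that since $\sg_1$ is fixed by $\tau_0$ (Lemma \ref{sigma}), it must lie in the $+$-part $M_1(a_0)\oplus M_0(a_0)\oplus M_{\frac14}(a_0)$, so writing
\[
\sg_1 = ra_0 + z + y,\quad z\in M_0(a_0),\ y\in M_{\frac14}(a_0),
\]
we immediately get $a_0\sg_1 = ra_0 + \tfrac14 y$. Thus the whole task is to pin down $r$ and $y$ from the decomposition $a_1 = \lm a_0 + \al_1 + \bt_1 + \gm_1$ of Lemma \ref{formulas}.

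First I would compute $a_0a_1$ directly from the eigenspace decomposition: since $a_0\al_1 = 0$, $a_0\bt_1 = \tfrac14\bt_1$, $a_0\gm_1 = \tfrac{1}{32}\gm_1$, we have $a_0a_1 = \lm a_0 + \tfrac14\bt_1 + \tfrac{1}{32}\gm_1$. Subtracting $\tfrac{1}{32}(a_0+a_1)$ and regrouping using $a_1 = \lm a_0 + \al_1 + \bt_1 + \gm_1$ collapses the $\gm_1$ contribution and produces
\[
\sg_1 \;=\; \frac{31\lm - 1}{32}a_0 \;-\; \frac{1}{32}\al_1 \;+\; \frac{7}{32}\bt_1,
\]
so we read off $r = (31\lm - 1)/32$, $z = -\tfrac{1}{32}\al_1$, and $y = \tfrac{7}{32}\bt_1$. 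Hence
\[
a_0\sg_1 \;=\; \frac{31\lm - 1}{32}\,a_0 \;+\; \frac{7}{128}\,\bt_1.
\]

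Finally I would substitute formula \eqref{eq-bt} for $\bt_1$ in terms of $\sg_1, a_0, a_1, a_{-1}$ and collect coefficients. The $\sg_1$-coefficient becomes $\tfrac{7}{128}\cdot 4 = \tfrac{7}{32}$; the $a_0$-coefficient becomes $\tfrac{31\lm-1}{32} - \tfrac{7}{128}(4\lm - \tfrac18) = \tfrac{3}{4}\lm - \tfrac{25}{2^{10}}$ after clearing to a common denominator of $1024$; the $(a_1+a_{-1})$-coefficient is $\tfrac{7}{128}\cdot\tfrac{1}{16} = \tfrac{7}{2^{11}}$. This yields the claimed identity.

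The only real obstacle is bookkeeping with the fractions, in particular producing the constant $-25/1024$; there is no conceptual difficulty once one exploits $\sg_1\in M_+(a_0)$ to eliminate the $\gm_1$-contribution (which is what makes the $1/32^2$ denominators cancel out cleanly).
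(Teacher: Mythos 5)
Your proof is correct. The route is slightly different from the paper's: the paper multiplies both sides of the expression \eqref{eq-al} for $\al_1$ by $a_0$, exploits $a_0\al_1 = 0$ to obtain an equation already involving $a_0\sg_1$, and then substitutes $a_0a_{\pm1} = \sg_1 + \tfrac{1}{32}(a_0 + a_{\pm1})$; you instead first decompose $\sg_1$ itself into eigenvectors of $\ad(a_0)$ (getting $\sg_1 = \tfrac{31\lm-1}{32}a_0 - \tfrac{1}{32}\al_1 + \tfrac{7}{32}\bt_1$), apply $a_0$ using the eigenvalue action, and then back-substitute \eqref{eq-bt} for $\bt_1$. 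Both are elementary manipulations of Lemma \ref{formulas} of comparable length; your version makes explicit the conceptual point (already latent in Lemma \ref{sigma}) that $\sg_1 \in M_+(a_0)$ so its $\gm$-component vanishes, whereas the paper's version is marginally more direct since it needs only \eqref{eq-al} and avoids computing $\sg_1$'s own decomposition. The arithmetic in both cases checks out and lands on the same identity.
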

\begin{proof}
	Multiplying both sides of the equality \eqref{eq-al} from Lemma \ref{formulas}
	with $a_0$, we get
	$0=-4a_0\sg_1+(3\lm-\frac{1}{8})a_0+\frac{7}{16}(a_0 a_1+a_0 a_{-1})$.
	This, after substituting $a_0 a_{\pm1}=\sg_1+\frac{1}{2^5}(a_0+a_{\pm1})$
	and collecting terms, gives $4a_0\sg_1=
	\frac{7}{8}\sg_1+\left(3\lm-\frac{25}{2^8}\right)a_0+\frac{7}{2^9}(a_{-1}+a_1)$,
	proving the claim.
\end{proof}

\begin{lemma}
	\label{with sg1}
	For all $k$, we have $\la a_k,\sg_1\ra=\frac{1}{32}(31\lm-1)$.
\end{lemma}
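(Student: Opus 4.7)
The plan is to avoid any decomposition-of-$a_k$ argument by exploiting the full force of Lemma \ref{sigma one}: the element $\sg_1$ is simultaneously equal to $\sg(a_k, a_{k+1})$ for every $k \in \ZZ$. This lets us always write $\sg_1$ in a form in which one of the two axes involved is the specific $a_k$ we want to pair with.

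Concretely, I will pick any $k$ and use the identity
\begin{equation*}
    \sg_1 = \sg(a_k, a_{k+1}) = a_k a_{k+1} - \tfrac{1}{32}(a_k + a_{k+1}).
\end{equation*}
Then I compute
\begin{equation*}
    \la a_k, \sg_1\ra = \la a_k, a_k a_{k+1}\ra - \tfrac{1}{32}\bigl(\la a_k, a_k\ra + \la a_k, a_{k+1}\ra\bigr).
\end{equation*}
For the first term, associativity of the form gives $\la a_k, a_k a_{k+1}\ra = \la a_k a_k, a_{k+1}\ra = \la a_k, a_{k+1}\ra$, using that $a_k$ is idempotent. For the second term, the Frobenius condition (a') gives $\la a_k, a_k\ra = 2\op{CC}(\frak V(4,3)) = 1$. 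Finally, $\la a_k, a_{k+1}\ra = \nu_1 = \lm$ by Lemma \ref{lambdas}, since $|k - (k+1)| = 1$ is odd. Substituting these three values produces
\begin{equation*}
    \la a_k, \sg_1\ra = \lm - \tfrac{1}{32}(1 + \lm) = \tfrac{1}{32}(31\lm - 1),
\end{equation*}
which is the claim, independent of $k$.

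There is essentially no obstacle here: the only nontrivial input beyond the basic axioms is the observation that $\sg_1 = \sg(a_k, a_{k+1})$ for all $k$, which has already been established. Everything else is a one-line application of associativity and idempotency, so the proof should occupy just a few lines in the paper.
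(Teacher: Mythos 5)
Your proof is correct and is essentially the same computation as the paper's: both expand $\la a_k,\sg_1\ra$ using the definition of $\sg(\cdot,\cdot)$, associativity of the form, idempotency, and the normalization $\la a_k,a_k\ra = 1$. The only cosmetic difference is that the paper does the computation for $k=0$ and then invokes $T$-transitivity to extend, whereas you fold the transitivity into the computation by writing $\sg_1 = \sg(a_k,a_{k+1})$ from the outset.
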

\begin{proof}
	Note that $\lm=\la a_0,a_1\ra=\la a_0 a_0,a_1\ra=
	\la a_0,a_0 a_1\ra=\la a_0,\sg_1\ra+\frac{1}{32}\left(\la a_0,a_0\ra+\la a_0,a_1\ra\right)$.
	Since $\la a_0,a_0\ra=1$ and $\la a_0,a_1\ra=\lm$, we conclude that
	$\lm=\la a_0,\sg_1\ra+\frac{1}{32}(1+\lm)$, which gives us
	$\la a_0,\sg_1\ra=\frac{1}{32}(31\lm-1)$.

	Since $T$ is transitive on the $a_i$ and fixes $\lm$, we have
	the same equality for all values of $k$.
\end{proof}

We can now strengthen Lemma \ref{flipped lambdas}.

\begin{lemma}
	\label{no parity}
	When $k$ is even, $\nu_k^e=\nu_k^o$. 
	Consequently, for every $k$ odd or even,
	there is $\nu_k\in R$, satisfying $\bar\nu_k=\nu_k$,
	such that $\la a_i,a_{i+k}\ra=\nu_k$ for all $i$.
\end{lemma}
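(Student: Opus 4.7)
The plan is to derive a general four-term identity on the pairwise form values $\nu(i,j) = \la a_i,a_j\ra$ by computing products with the $T$-invariant element $\sg_1$ in two different ways, then specialize it so that the parity difference $\Dl_{2m} := \nu_{2m}^e - \nu_{2m}^o$ satisfies a sign-reversing recurrence with zero initial value.

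First I will extend Lemma~\ref{times sigma} to arbitrary $k\in\ZZ$. Since $\sg_1$ is fixed by the group $T$ (Lemma~\ref{sigma one}), applying an element of $T$ that sends $a_0$ to $a_k$ --- using $\rho^{k/2}$ when $k$ is even and the flip followed by $\rho^{(k-1)/2}$ when $k$ is odd --- yields
$$ a_k\sg_1 = \tfrac{7}{32}\sg_1 + \bigl(\tfrac{3}{4}\lm - \tfrac{25}{2^{10}}\bigr)a_k + \tfrac{7}{2^{11}}(a_{k-1}+a_{k+1}) $$
for every $k$. Then, using associativity of the bilinear form, I will compute $\la a_k\sg_1,a_l\ra = \la a_k,a_l\sg_1\ra$ by expanding each side with the displayed formula. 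By Lemma~\ref{with sg1} the two $\tfrac{7}{32}\sg_1$-contributions give the same constant $\tfrac{7}{32}\cdot\tfrac{31\lm-1}{32}$, the two $\nu(k,l)$-terms match trivially, and the remaining terms cancel into
$$ \nu(k-1,l) + \nu(k+1,l) = \nu(k,l-1) + \nu(k,l+1). $$

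Finally I will specialize to $k = 0$ and $l = 2m+1$ with $m\geq 0$. Parsing the distances and parities, the left side equals $\nu_{2m+2}^o + \nu_{2m}^o$ and the right side equals $\nu_{2m}^e + \nu_{2m+2}^e$, so $\Dl_{2m+2} = -\Dl_{2m}$. Since $\Dl_0 = 1 - 1 = 0$ by axis condition~(a'), induction gives $\Dl_{2m} = 0$ for all $m\geq 0$; the case of negative $k$ follows from the symmetry $\nu(i,j)=\nu(j,i)$. The existence of a common value $\nu_k$ and its flip-invariance then follow immediately by combining $\nu_k^e = \nu_k^o$ with Lemma~\ref{flipped lambdas}. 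The main obstacle is simply identifying the right auxiliary computation: once one knows to compute $\la a_k\sg_1,a_l\ra$ by moving $\sg_1$ across the form, the rest is an easy induction.
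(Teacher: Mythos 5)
Your proof is correct and is essentially the same as the paper's: both rest on Lemma~\ref{times sigma}, the $T$-invariance of $\sg_1$, the constancy of $\la a_k,\sg_1\ra$ from Lemma~\ref{with sg1}, associativity of the form, and the resulting two-step recurrence with base case $\nu_0^e=\nu_0^o=1$. Your packaging via the clean four-term identity $\nu(k-1,l)+\nu(k+1,l)=\nu(k,l-1)+\nu(k,l+1)$ and the explicit general formula for $a_k\sg_1$ is a slightly tidier way of organizing the same computation the paper carries out with a transitivity step on $\la a_{k-1}a_0,\sg_1\ra=\la a_k a_1,\sg_1\ra$, but it is not a genuinely different route.
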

\begin{proof}
	We only need to prove the first claim. Suppose that $k$ is even. If
	$k=0$ then $\nu_0^e=1=\nu_0^o$. Hence we can assume that $k$ is at least $2$.
	By applying the flip to Lemma \ref{times sigma}, we find that
	\begin{equation}
		a_1\sg_1=\frac{7}{32}\sg_1+\left(\frac{3}{4}\lm-\frac{25}{2^{10}}\right)a_1+\frac{7}{2^{11}}(a_2+a_0).
	\end{equation}
	Applying the form with $a_{k-1}$ to each side of the equality in \ref{times sigma} for $a_0\sg_1$,
	and similarly applying $\la a_k,\_\ra$ to the expression for $a_1\sg_1$,
	we obtain two further equations:
	\begin{align}
	\la a_{k-1},a_0\sg_1\ra&=\frac{7}{2^5}\la a_{k-1},\sg_1\ra+\left(\frac{3}{2^2}\lm-\frac{25}{2^{10}}\right)\nu_{k-1}+\frac{7}{2^{11}}(\nu_k^o+\nu_{k-2}^o), \\
	\la a_k,a_1\sg_1\ra&=\frac{7}{2^5}\la a_k,\sg_1\ra+\left(\frac{3}{2^2}\lm-\frac{25}{2^{10}}\right)\nu_{k-1}+\frac{7}{2^{11}}(\nu_k^e+\nu_{k-2}^e).
	\end{align}

	Since $k-1$ is odd, as previously remarked $T_0$ contains an element
	$\tau$ mapping $\{a_0,a_{k-1}\}$ to $\{a_1,a_k\}$. 
	Because the algebra product is commutative,
	we have $(a_0 a_{k-1})^\tau=a_1 a_k$.

	As the form and $\sg_1$ are $T_0$-invariant,
	\begin{equation}
		\la a_{k-1},a_0\sg_1\ra
	 	=\la a_{k-1} a_0,\sg_1\ra
	 	=\la (a_{k-1} a_0)^\tau,{\sg_1}^\tau\ra=\la a_k a_1,\sg_1\ra
		=\la a_k,a_1 \sg_1\ra.
	\end{equation}
	Also, by Lemma \ref{with sg1}, $\la a_{k-1},\sg_1\ra=\la a_k,\sg_1\ra$.
	Therefore, from the two equalities above, we get
	$\frac{7}{2^{11}}(\nu_k^o+\nu_{k-2}^o)=\frac{7}{2^{11}}(\nu_k^e+\nu_{k-2}^e)$,
	that is, $\nu_k^o+\nu_{k-2}^o=\nu_k^e+\nu_{k-2}^e$. This means that
	$\nu_k^o-\nu_k^e=-\nu_{k-2}^o+\nu_{k-2}^e=-(\nu_{k-2}^o-\nu_{k-2}^e)$.
	Since this holds for all even $k\ge 2$ and since $\nu_0^o-\nu_0^e=0$,
	we conclude by induction that $\nu_k^o-\nu_k^e=0$ for all even $k$.
\end{proof}

The value $\nu_2$ will also frequently appear,
so we give it the special name $\mu=\nu_2$.

\begin{lemma}
	\label{sg1 with sg1}
	We have $\la\sg_1,\sg_1\ra=\frac{3}{4}\lm^2+\frac{65}{2^9}\lm-\frac{3}{2^{11}}+\frac{7}{2^{11}}\mu$.
\end{lemma}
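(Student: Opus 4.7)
The plan is to expand $\la\sg_1,\sg_1\ra$ using the definition $\sg_1=a_0a_1-\frac{1}{32}(a_0+a_1)$, convert a product inside the form using associativity, and then substitute the known expression for $a_1\sg_1$.

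First I would write
\begin{equation*}
\la\sg_1,\sg_1\ra=\la a_0a_1,\sg_1\ra-\tfrac{1}{32}\la a_0,\sg_1\ra-\tfrac{1}{32}\la a_1,\sg_1\ra,
\end{equation*}
and immediately apply Lemma \ref{with sg1} to the last two summands, giving a contribution of $-\tfrac{1}{16}\cdot\tfrac{1}{32}(31\lm-1)$. For the first summand, by associativity of the form,
$\la a_0a_1,\sg_1\ra=\la a_0,a_1\sg_1\ra$.

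The key input is an expression for $a_1\sg_1$. I would obtain it by applying the flip to Lemma \ref{times sigma}: since $\sg_1$ and $\lm$ are $T$-invariant (Lemma \ref{sigma one} and Lemma \ref{no parity}) while the flip sends $a_0,a_{-1},a_1$ to $a_1,a_2,a_0$ respectively, the result is
\begin{equation*}
a_1\sg_1=\tfrac{7}{32}\sg_1+\bigl(\tfrac{3}{4}\lm-\tfrac{25}{2^{10}}\bigr)a_1+\tfrac{7}{2^{11}}(a_0+a_2).
\end{equation*}
Pairing with $a_0$ and using $\la a_0,a_0\ra=1$, $\la a_0,a_1\ra=\lm$, $\la a_0,a_2\ra=\mu$ together with Lemma \ref{with sg1} yields a concrete rational expression in $\lm$, $\lm^2$, $\mu$ and the constant term.

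The final step is purely arithmetic: collect all contributions to $\la\sg_1,\sg_1\ra$, placing coefficients of $1,\lm,\lm^2,\mu$ over the common denominator $2^{11}$. The $\lm^2$-term comes solely from $(\tfrac{3}{4}\lm-\tfrac{25}{2^{10}})\lm$; the $\mu$-term comes solely from $\tfrac{7}{2^{11}}\la a_0,a_2\ra$; the $\lm$-coefficient combines $\tfrac{7}{32}\la a_0,\sg_1\ra$, $-\tfrac{25}{2^{10}}\lm$, and $-\tfrac{1}{16}\cdot\tfrac{31}{32}\lm$ to $\tfrac{65}{2^9}$; and the constants collapse to $-\tfrac{3}{2^{11}}$. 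There is no real obstacle, only bookkeeping; the only substantive ingredient beyond the definition is the flipped form of Lemma \ref{times sigma}, which requires the preceding invariance results to legitimise.
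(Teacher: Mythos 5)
Your proposal is correct and takes essentially the same route as the paper: both compute a single quantity — $\la a_1, a_0\sg_1\ra = \la a_0, a_1\sg_1\ra$ — once by expanding a known formula against the form, and once via associativity and the definition of $\sg_1$, then compare. You obtain $a_1\sg_1$ by flipping Lemma \ref{times sigma} while the paper pairs $a_1$ directly against $a_0\sg_1$, but these are the same computation up to relabeling, and your arithmetic for the coefficients of $\lm^2$, $\lm$, $\mu$ and the constant all checks out.
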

\begin{proof}
	Starting from the equality in Lemma \ref{times sigma}, we get
	\begin{equation}
		\begin{aligned}
			\la a_1,a_0\sg_1\ra & = 
			\frac{7}{32}\la a_1,\sg_1\ra+
			\left(\frac{3}{4}\lm-\frac{25}{2^{10}}\right)\la a_1,a_0\ra+
			\frac{7}{2^{11}}(\la a_1,a_{-1}\ra+\la a_1,a_1\ra) \\
			& = \frac{7}{2^{10}}(31\lm-1)+
			\left(\frac{3}{4}\lm-\frac{25}{2^{10}}\right)\lm+
			\frac{7}{2^{11}}(\mu+1)=\frac{3}{4}\lm^2+\frac{3}{16}\lm+\frac{7}{2^{11}}(\mu-1).
		\end{aligned}
	\end{equation}

	Next, using Lemma \ref{with sg1},
	\begin{equation}
		\la a_1,a_0\sg_1\ra=\la a_1 a_0,\sg_1\ra=
		\la\sg_1,\sg_1\ra+\frac{1}{32}(\la a_0,\sg_1\ra+\la a_1,\sg_1\ra)=
		\la\sg_1,\sg_1\ra+\frac{1}{2^9}(31\lm-1).
	\end{equation}

	Thus,
	\begin{equation}
    \la\sg_1,\sg_1\ra+\frac{1}{2^9}(31\lm-1)=
		\frac{3}{4}\lm^2+\frac{3}{16}\lm+\frac{7}{2^{11}}(\mu-1),
	\end{equation}
	which after collecting terms gives us the claim.
\end{proof}

\begin{lemma}
	\label{lem a0 sg2}
	We have $a_0\sg_2^e=\frac{7}{32}\sg_2^e+\left(\frac{3}{4}\mu-\frac{25}{2^{10}}\right)a_0+\frac{7}{2^{11}}(a_{-2}+a_2)$.
\end{lemma}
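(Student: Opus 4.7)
The plan is to mirror the proof of Lemma \ref{times sigma} exactly, since the statement is its distance-$2$ analogue with the pair $(a_0,a_1)$ replaced by $(a_0,a_2)$ and $\lm=\nu_1$ replaced by $\mu=\nu_2$. First I would establish a distance-$2$ analogue of Lemma \ref{formulas}. Write
$$a_2=\mu a_0+\al+\bt+\gm$$
for the decomposition of $a_2$ into eigenvectors of $\ad(a_0)$ with eigenvalues $1$, $0$, $\frac{1}{4}$, $\frac{1}{32}$ respectively. Applying $\tau_0$ and using $a_2^{\tau_0}=a_{-2}$ from Lemma \ref{action}, I get $\gm=\frac{1}{2}(a_2-a_{-2})$ exactly as in \eqref{eq-gm}. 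Comparing $a_0 a_2=\mu a_0+\frac{1}{4}\bt+\frac{1}{32}\gm$ with $a_0 a_2=\sg_2^e+\frac{1}{32}(a_0+a_2)$, which is the definition of $\sg_2^e$ applied through Lemma \ref{sigma two} (since $0$ is even), solves for $\frac{1}{4}\bt$, and substituting back in yields the analogue of \eqref{eq-al}, namely
$$\al=-4\sg_2^e+\bigl(3\mu-\tfrac{1}{8}\bigr)a_0+\tfrac{7}{16}(a_2+a_{-2}).$$

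Next, since $\al\in M_0(a_0)$, we have $a_0\al=0$. Multiplying the displayed equation for $\al$ through by $a_0$ gives
$$0=-4\,a_0\sg_2^e+\bigl(3\mu-\tfrac{1}{8}\bigr)a_0+\tfrac{7}{16}(a_0 a_2+a_0 a_{-2}).$$
Both inner products on the right reduce via Lemma \ref{sigma two}: $a_0 a_2=\sg_2^e+\frac{1}{32}(a_0+a_2)$, and likewise $a_0 a_{-2}=\sg_2^e+\frac{1}{32}(a_0+a_{-2})$ because $-2$ is even so $\sg(a_{-2},a_0)=\sg_2^e$ as well. Substituting these two identities, collecting the $a_0$ coefficient as $(3\mu-\frac{1}{8})+\frac{7}{2^8}=3\mu-\frac{25}{2^8}$, and dividing by $4$ produces the stated formula.

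The only subtle bookkeeping point, which is the step most prone to a sign or parity error, is ensuring that both $\sg(a_0,a_2)$ and $\sg(a_{-2},a_0)$ are the \emph{even} invariant $\sg_2^e$ rather than $\sg_2^o$; this is immediate from Lemma \ref{sigma two} because both indices $0$ and $-2$ are even. Everything else is formally identical to the proof of Lemma \ref{times sigma}, and no new eigenvector analysis or form computation is required.
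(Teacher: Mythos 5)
Your proof is correct and takes essentially the same approach as the paper: both derive the projection $\al_2$ of $a_2$ onto $M_0(a_0)$ by the distance-$2$ analogue of Lemma \ref{formulas}, multiply by $a_0$, substitute $a_0a_{\pm2}=\sg_2^e+\tfrac{1}{32}(a_0+a_{\pm2})$, and solve for $a_0\sg_2^e$. (Incidentally, the paper's displayed formulas for $\al_2$ and $\bt_2$ have a typo, writing $\lm$ where $\mu$ should appear; your version has the correct $\mu$.)
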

\begin{proof}
	Set
	\begin{align}
		\label{eq-al2}
		\al_2 & =-4\sg_2 + (3\lm-\frac{1}{8})a_0 + \frac{7}{16}(a_2+a_{-2}), \\
		\label{eq-bt2}
		\bt_2 & = 4\sg_2 - (4\lm-\frac{1}{8})a_0 + \frac{1}{16}(a_2+a_{-2}).
	\end{align}
	These terms are the projections of $a_2$
	onto the $0$- and $\frac{1}{4}$-eigenspaces of $a_0$, respectively,
	just as we found $\al_1$ and $\bt_1$ defined in Lemma \ref{formulas}.
	Multiplying both sides of \eqref{eq-al2} with $a_0$, we get
	$0=-4a_0\sg_2^e+(3\lm-\frac{1}{8})a_0+\frac{7}{16}(a_0 a_2+a_0 a_{-2})$.
	This, after substituting for $\sg_2^e$ and collecting terms, gives
	$4a_0\sg_2= \frac{7}{8}\sg_2+\left(3\lm-\frac{25}{2^8}\right)a_0+\frac{7}{2^9}(a_{-2}+a_2)$,
	proving the claim.
\end{proof}

\begin{lemma}
	\label{second sigma}
	We have 
	\begin{multline*}
	a_0\sigma_2^o=-\frac{1}{3}\biggl[\left(-32\lm+\frac{19}{16}\right)\sigma_1-\frac{7}{32}\sigma_2^e+\\
	\left(32\lm^2-5\lm+\frac{1}{8}\mu+\frac{127}{2^{10}}\right)a_0+\left(-\frac{1}{2}\lm+
	\frac{19}{2^{10}}\right)(a_1+a_{-1})-\frac{7}{2^{11}}(a_2+a_{-2})\biggr].
	\end{multline*}
\end{lemma}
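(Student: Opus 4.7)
The plan is to decompose $\sigma_2^o$ in the $a_0$-eigenspaces and then isolate $a_0\sigma_2^o$ via a chain of self-referential identities. Since $\tau_0$ swaps $a_{-1}$ and $a_1$, the element $\sigma_2^o=a_{-1}a_1-\tfrac{1}{32}(a_{-1}+a_1)$ is $\tau_0$-fixed, so $\sigma_2^o\in A_+(a_0)=A_1\oplus A_0\oplus A_{1/4}$ and $a_0\sigma_2^o=(\sigma_2^o)_1+\tfrac{1}{4}(\sigma_2^o)_{1/4}$. The $A_1$-projection is $(\sigma_2^o)_1=\langle a_0,\sigma_2^o\rangle a_0$, which I compute directly by form associativity using Lemma~\ref{with sg1}. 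This reduces the problem to identifying $(\sigma_2^o)_{1/4}$.

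For that, I substitute $a_{\pm 1}=\lambda a_0+\alpha_1+\beta_1\pm\gamma_1$ from Lemma~\ref{formulas} and expand $a_{-1}a_1=(\lambda a_0+\alpha_1+\beta_1)^2-\gamma_1^2$; the fusion rules, together with the Seress condition (Proposition~\ref{prop-seress}) applied to products of $\alpha_1\in A_0$, collapse most of the terms and leave
\[
(\sigma_2^o)_{1/4} = \left(\tfrac{\lambda}{2}-\tfrac{1}{16}\right)\beta_1 + 2\alpha_1\beta_1 - (\gamma_1^2)_{1/4}.
\]
The first key self-reference then comes from $\gamma_1=\tfrac{1}{2}(a_1-a_{-1})$: combined with $a_{-1}a_1=\sigma_2^o+\tfrac{1}{32}(a_{-1}+a_1)$ this forces $\gamma_1^2=\tfrac{15}{64}(a_1+a_{-1})-\tfrac{1}{2}\sigma_2^o$, so $(\gamma_1^2)_{1/4}=\tfrac{15}{32}\beta_1-\tfrac{1}{2}(\sigma_2^o)_{1/4}$. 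Substituting back isolates $(\sigma_2^o)_{1/4}$ as a linear combination of $\beta_1$ and $\alpha_1\beta_1$, and reduces the problem to expressing $\alpha_1\beta_1$ in the target basis $\{\sigma_1,\sigma_2^e,a_0,a_1+a_{-1},a_2+a_{-2}\}$.

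To finish, I expand $\alpha_1\beta_1$ using the formulas of Lemma~\ref{formulas} and reduce each summand via Lemma~\ref{times sigma} (and its $\tau_0$-image for $a_{-1}\sigma_1$) for products of $\sigma_1$ with an axis, together with the elementary identity $(a_1+a_{-1})^2=\tfrac{17}{16}(a_1+a_{-1})+2\sigma_2^o$ coming from nearest-neighbour multiplication. The main obstacle is that this expansion produces the unknown $\sigma_1^2$, which the earlier lemmas do not provide. I handle it by reading off from Lemma~\ref{times sigma} the $a_0$-eigenspace decomposition
\[
\sigma_1 = \tfrac{31\lambda-1}{32}a_0 - \tfrac{1}{32}\alpha_1 + \tfrac{7}{32}\beta_1,
\]
and from Lemma~\ref{lem a0 sg2} the parallel decomposition $\sigma_2^e=\tfrac{31\mu-1}{32}a_0-\tfrac{1}{32}\alpha_2+\tfrac{7}{32}\beta_2$; these two identities, together with the Seress-driven relations between the pairwise products of $\{\alpha_i,\beta_i,\gamma_i\}$, close into a finite linear system in the target basis, and it is through this coupling to the distance-$2$ pair $(a_0,a_2)$ that the $\sigma_2^e$- and $(a_2+a_{-2})$-terms in the stated formula enter. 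Aggregating all self-referential contributions produces one linear equation in $a_0\sigma_2^o$ whose unique solution is the displayed expression, the overall factor $-\tfrac{1}{3}$ arising as the combined coefficient of $a_0\sigma_2^o$ after moving every $a_0\sigma_2^o$-contribution to the same side.
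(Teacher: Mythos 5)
Your opening reduction is sound: $\sigma_2^o$ is $\tau_0$-fixed, hence lies in $A_+(a_0)$, and decomposing $a_{-1}a_1=(\lambda a_0+\alpha_1+\beta_1)^2-\gamma_1^2$ together with the self-referential identity $\gamma_1^2=\tfrac{15}{64}(a_1+a_{-1})-\tfrac{1}{2}\sigma_2^o$ correctly yields $(\sigma_2^o)_{1/4}=(\lambda-\tfrac{17}{16})\beta_1+4\alpha_1\beta_1$ and hence reduces the problem to $\alpha_1\beta_1$. The gap is in your proposed handling of the resulting $\sigma_1\sigma_1$: the step of squaring the eigenspace decomposition $\sigma_1=\tfrac{31\lambda-1}{32}a_0-\tfrac{1}{32}\alpha_1+\tfrac{7}{32}\beta_1$ is circular. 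Expanding gives
\[
\sigma_1\sigma_1=\tfrac{1}{1024}\alpha_1^2-\tfrac{7}{512}\alpha_1\beta_1+\tfrac{49}{1024}\beta_1^2+(\text{lower terms}),
\]
and since $\alpha_1^2,\alpha_1\beta_1,\beta_1^2$ each contain $\pm 16\sigma_1\sigma_1$, the net coefficient of $\sigma_1\sigma_1$ on the right is $\tfrac{16}{1024}+\tfrac{7\cdot16}{512}+\tfrac{49\cdot16}{1024}=1$: the equation reads $\sigma_1\sigma_1=\sigma_1\sigma_1+(\text{known})$ and determines nothing. Bringing in $\sigma_2^e=\tfrac{31\mu-1}{32}a_0-\tfrac{1}{32}\alpha_2+\tfrac{7}{32}\beta_2$ only introduces new unknowns $\alpha_2^2,\alpha_2\beta_2,\beta_2^2$, and the phrase ``Seress-driven relations between the pairwise products'' is too vague to supply a concrete independent equation.

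The paper's proof avoids $\sigma_1\sigma_1$ outright. Because $\alpha_1$ and $\beta_1$ carry $\sigma_1$-coefficients $\mp 4$, the difference $\alpha_1\alpha_1-\beta_1\beta_1$ is free of $\sigma_1\sigma_1$, while the fusion rules put $\alpha_1\alpha_1$ and $\beta_1\beta_1-\langle\beta_1\beta_1,a_0\rangle a_0$ in the $0$-eigenspace, so $a_0$ annihilates $\alpha_1\alpha_1-\beta_1\beta_1+\langle\beta_1\beta_1,a_0\rangle a_0$. Expanding this, the only not-yet-computed product is $a_0\sigma_2^o$, which can be isolated. If you want to keep your route, the missing ingredient is a concrete second equation coupling $\sigma_1\sigma_1$ to $a_0\sigma_2^o$ with a non-degenerate coefficient; the natural candidate is the Seress identity $(a_0a_1)\alpha_1=a_0(a_1\alpha_1)$ (the engine of Lemma~\ref{sigma sigma}), whose left side carries $-4\sigma_1\sigma_1$ and whose right side carries $\tfrac{7}{16}a_0\sigma_2^o$. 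Combined with your Eq.\ $a_0\sigma_2^o=-16\sigma_1\sigma_1+(\text{known})$, this gives a $2\times 2$ system whose $a_0\sigma_2^o$-coefficient is $1-\tfrac{7}{4}=-\tfrac{3}{4}$, explaining the $\tfrac{1}{3}$ in the answer; but you must actually write this equation down rather than gesture at it, and at that point you are effectively proving Lemmas~\ref{second sigma} and~\ref{sigma sigma} as a coupled system rather than sequentially.
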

\begin{proof}
	Let $\al_1 = \al_1$ and $\bt_1 = \bt_1$
	be the projection of $a_1$ onto the $0$- and $\frac{1}{4}$-eigenspace
	respectively of $a_0$, as given in Lemma \ref{formulas}.
	By the fusion rules (c),
	$\al_1\al_1$ is a $0$-eigenvector,
	$\bt_1\bt_1$ is a sum of $0$- and $1$-eigenvectors
	and therefore $\bt_1\bt_1-\la\bt_1\bt_1,a_0\ra a_0$ is a $0$-eigenvector.
	Thus $a_0(\al_1\al_1-\bt_1\bt_1+ \la\bt_1\bt_1,a_0\ra a_0)=0$.
	This will yield the claim.

	We first calculate the term
	\begin{multline}
	\la\bt_1\bt_1,a_0\ra=\la\bt_1,\bt_1 a_0\ra=\frac{1}{4}\la\bt_1,\bt_1\ra\\
	=\frac{1}{4}\left(16\la\sg_1,\sg_1\ra-16\lm^2+\frac{63}{32}\lm+\frac{1}{2^7}(\mu-5)\right)
	=-\lm^2+\lm+\frac{1}{64}\left(\mu-1\right).
	\end{multline}
	Now $\al_1$ and $\bt_1$ have similar terms, so there is a lot of cancellation to produce
	\begin{multline}
	\al_1\al_1-\bt_1\bt_1=\frac{3}{8}\sigma_2^o-\frac{7}{2^9}(a_2+a_{-2})+(8\lm-2)\sg_1\\
	+\left(-\lm^2+\frac{1}{4}\lm-\frac{9}{2^8}\right)a_0+\left(-\frac{23}{8}\lm+\frac{75}{2^8}\right)(a_1+a_{-1})
	\end{multline}
	and thus
	\begin{multline}
	\alpha_1 \alpha_1-\beta_1 \beta_1+\la\beta_1\beta_1,a_0\ra a_0
	=(8\lm-2)\sg_1+\frac{3}{8}\sg_2^o-\frac{7}{2^9}(a_2+a_{-2})+\\
	+\left(-2\lm^2+\frac{5}{4}\lm+\frac{1}{64}\mu-\frac{13}{2^8}\right)a_0
	+\left(-\frac{23}{8}\lm+\frac{75}{2^8}\right)(a_1+a_{-1}).
	\end{multline}
	Now, finally,
	\begin{multline}
	0=a_0(\alpha_1 \alpha_1-\beta_1 \beta_1+\la\beta_1\beta_1,a_0\ra a_0)=\\
	= \frac{3}{8}a_0  \sigma_2^o + \left(-4\lm+\frac{19}{2^7}\right)\sg_1
	+ \left(4\lm^2-\frac{5}{8}\lm+\frac{1}{64}\mu+\frac{127}{2^{13}}\right)a_0 +\\
	+\left(-\frac{1}{16}\lm+\frac{19}{2^{13}}\right)(a_1+a_{-1})
	-\frac{7}{2^{14}}(a_2+a_{-2})-\frac{7}{2^8}\sg_2^e,
	\end{multline}
	and rearranging terms gives the result.
\end{proof}

\begin{lemma}
	\label{sigma sigma}
	We have
	\begin{multline*}\sg_1\sg_1=
	\frac{1}{3}\left[\left(-\frac{5}{4}\lm-\frac{13}{2^9}\right)\sg_1-\frac{7}{2^9}\sg_2^e+\frac{21}{2^{11}}\sg_2^o\right]+\\
	\frac{7}{3}\left[\left(\frac{1}{2}\lm^2-\frac{1}{2^7}\lm+\frac{1}{2^9}\mu-\frac{1}{2^{15}}\right)a_0+\left(\frac{7}{2^8}\lm-
	\frac{35}{2^{16}}\right)(a_1+a_{-1})+\frac{7}{2^{16}}(a_2+a_{-2})\right].
	\end{multline*}
\end{lemma}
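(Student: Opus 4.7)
The plan is to compute $\sigma_1\sigma_1$ from the eigenspace decomposition of $\sigma_1$ with respect to $a_0$.  From the proof of Lemma \ref{sigma} we have
\[\sigma_1 = ca_0 - \tfrac{1}{32}\alpha_1 + \tfrac{7}{32}\beta_1, \quad c = \tfrac{1}{32}(31\lambda - 1),\]
where $\alpha_1,\beta_1$ are the projections of $a_1$ onto the $0$- and $\tfrac{1}{4}$-eigenspaces of $a_0$ as in Lemma \ref{formulas}.  Squaring $\sigma_1$ and using $a_0\alpha_1 = 0$, $a_0\beta_1 = \tfrac{1}{4}\beta_1$, $a_0^2 = a_0$, I obtain
\[\sigma_1\sigma_1 = c^2 a_0 + \tfrac{7c}{64}\beta_1 + \tfrac{1}{1024}\alpha_1\alpha_1 - \tfrac{7}{512}\alpha_1\beta_1 + \tfrac{49}{1024}\beta_1\beta_1.\]
So it suffices to express $\alpha_1\alpha_1$, $\alpha_1\beta_1$, and $\beta_1\beta_1$ in the $T_0$-invariant basis $\{a_0, a_{\pm 1}, a_{\pm 2}, \sigma_1, \sigma_2^e, \sigma_2^o\}$.

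By the $\frak V(4,3)$ fusion rules, $\alpha_1\alpha_1 \in M_0$, $\beta_1\beta_1 \in M_1\oplus M_0$, and $\alpha_1\beta_1 \in M_{1/4}$, all with respect to $a_0$.  The proof of Lemma \ref{second sigma} already supplies $\alpha_1\alpha_1 - \beta_1\beta_1$ explicitly in the invariant basis and the scalar $\la\beta_1\beta_1, a_0\ra = -\lambda^2 + \lambda + \tfrac{1}{64}(\mu - 1)$.  For a second relation, compute $(\alpha_1+\beta_1)^2$ by direct expansion: using $\alpha_1 + \beta_1 = \tfrac{1}{2}(a_1+a_{-1}) - \lambda a_0$ from Lemma \ref{formulas} together with $a_ia_{i+1} = \sigma_1 + \tfrac{1}{32}(a_i+a_{i+1})$ and $a_{-1}a_1 = \sigma_2^o + \tfrac{1}{32}(a_{-1}+a_1)$, this expresses $(\alpha_1+\beta_1)^2$ in the invariant basis.

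The crucial extra step isolates $\alpha_1\beta_1$.  Apply $a_0$ to the identity
\[\alpha_1\alpha_1 + 2\alpha_1\beta_1 + \beta_1\beta_1 = (\alpha_1+\beta_1)^2.\]
Using the eigenspace locations, the first summand contributes $0$, the second $\tfrac{1}{2}\alpha_1\beta_1$, and the third $\la\beta_1\beta_1, a_0\ra a_0$, yielding
\[\alpha_1\beta_1 = 2\bigl[a_0(\alpha_1+\beta_1)^2 - \la\beta_1\beta_1, a_0\ra a_0\bigr].\]
The right-hand side is evaluated using the explicit $(\alpha_1+\beta_1)^2$ together with Lemmas \ref{times sigma}, \ref{lem a0 sg2}, and \ref{second sigma} for $a_0\sigma_1$, $a_0\sigma_2^e$, and $a_0\sigma_2^o$.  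Once $\alpha_1\beta_1$ is known, $\alpha_1\alpha_1$ and $\beta_1\beta_1$ are determined by solving the linear system given by $\alpha_1\alpha_1 - \beta_1\beta_1$ and $(\alpha_1+\beta_1)^2$.

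The main obstacle is not structural but purely computational: carefully collecting all coefficients and simplifying a sizeable expression.  The factor $\tfrac{1}{3}$ in the stated formula descends from the factor $-\tfrac{1}{3}$ appearing in Lemma \ref{second sigma}, entering via the substitution of $a_0\sigma_2^o$ when evaluating $a_0(\alpha_1+\beta_1)^2$.  Substituting $\alpha_1\alpha_1$, $\beta_1\beta_1$, $\alpha_1\beta_1$ back into the expansion of $\sigma_1\sigma_1$ above and collecting terms yields the claim.
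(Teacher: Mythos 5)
Your argument is correct, but it takes a genuinely different route from the paper's proof. The paper invokes the Seress property (Proposition~\ref{prop-seress}): since $\al_1$ is a $0$-eigenvector of $a_0$, the product $(a_0a_1)\al_1 = a_0(a_1\al_1)$ associates, and expanding both sides---the left containing $-4\sg_1\sg_1$ because $\al_1 = -4\sg_1 + \dotsb$ and $a_0a_1 = \sg_1 + \dotsb$---yields a linear equation that can be solved for $\sg_1\sg_1$ once $a_0\sg_1$ and $a_0\sg_2^o$ are substituted. You instead square the eigencomponent decomposition $\sg_1 = \frac{1}{32}(31\lm-1)a_0 - \frac{1}{32}\al_1 + \frac{7}{32}\bt_1$, which requires the three products $\al_1\al_1$, $\al_1\bt_1$, $\bt_1\bt_1$; you obtain them by combining the formulas for $\al_1\al_1 - \bt_1\bt_1$ and $\la\bt_1\bt_1,a_0\ra$ already computed in the proof of Lemma~\ref{second sigma} with the directly computable $(\al_1+\bt_1)^2$, and your key observation is that applying $\ad(a_0)$ to $(\al_1+\bt_1)^2$ isolates $\al_1\bt_1$ because the summands live in distinct eigenspaces by the fusion rules. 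Both routes depend on the same earlier lemmas, in particular on $a_0\sg_2^o$ from Lemma~\ref{second sigma}, which is where the common factor of $\frac{1}{3}$ enters. Your version is perhaps more systematic---it makes explicit which eigenspace product is missing and how to reach it---whereas the paper's version is shorter but requires seeing that this particular Seress relation is the right one to exploit. One trivial slip: your invariant-basis expression for $(\al_1+\bt_1)^2 = \frac{1}{4}(a_1+a_{-1})^2 - \lm(a_1+a_{-1})a_0 + \lm^2 a_0$ involves only $\sg_1$, $\sg_2^o$, and the $a_i$, so Lemma~\ref{lem a0 sg2} for $a_0\sg_2^e$ is not actually needed in the final evaluation ($\sg_2^e$ enters only as a coefficient in the output of $a_0\sg_2^o$, not as an argument of $\ad(a_0)$).
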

\begin{proof}
	We use $\al_1$ as in the lemma previous.
	Recall also that $a_0 a_1=\sg_1+\frac{1}{32}(a_0+a_1)$.
	By Lemma \ref{prop-seress}, as $\al_1$ is a $0$-eigenvector, we have that
	$$(a_0 a_1)\al_1=a_0(a_1\al_1).$$
	Multiplying out the lefthand side gives
	\begin{multline}
	(a_0 a_1)\al_1
	=-4\sg_1\sg_1
	+\left(\left(3\lm-\frac{1}{4}\right)a_0+\frac{5}{16}a_1+\frac{7}{16}a_{-1}\right)\sg_1\\
	+\frac{1}{32}a_0\left(\left(3\lm-\frac{1}{8}\right) a_0+\left(3\lm+\frac{5}{16}\right)a_1+\frac{7}{16}a_{-1}\right)
	+\frac{7}{2^9}(a_1+a_1 a_{-1}),
	\end{multline}
	while on the right
	\begin{multline}
		a_0(a_1\al_1)=
		\left(\frac{3}{32}\lm-\frac{9}{2^9}\right)a_0+
		\left(-\frac{93}{32}\lm+\frac{279}{2^9}\right)a_0 a_1+
		\frac{7}{2^9}a_0 a_{-1} +
		(3\lm-1)a_0 \sg_1+\frac{7}{16}a_0\sg_2^o.
	\end{multline}
	The products $a_0\sg_1$, $a_1\sg_1$, $a_{-1}\sg_1$ are all known by Lemma \ref{times sigma}.
	Lemma \ref{second sigma} provides $a_0 \sg_2^o$.
	Thus we can rearrange to find the claim.
\end{proof}

\begin{lemma}
	\label{and a3}
	We have that
	\begin{multline*}
	a_3 = a_{-2}
		+ (2^8\lm-6)(a_{-1} - a_2)
		+ \left( \frac{2^{15}}{7}\lm^2 - \frac{2^8 3^2}{7}\lm + \frac{2^7}{7}\mu + 3\right)(a_0 - a_1)
	+ 2^5 7(\sg_2^o-\sg_2^e).
	\end{multline*}
\end{lemma}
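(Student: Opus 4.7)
The plan is to derive a closed-form formula for $a_3$ by combining the $\tau_1$-symmetry of Lemma \ref{times sigma} with a fusion-rule-driven computation of one auxiliary product.

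Since $\tau_1$ fixes $\sg_1$ and $a_1$, and sends $a_0 \mapsto a_2$, $a_{-1}\mapsto a_3$ (Lemma \ref{action}, Lemma \ref{sigma one}), applying $\tau_1$ to Lemma \ref{times sigma} and subtracting from the original yields
$$ (a_0 - a_2)\sg_1 = \bigl(\tfrac{3}{4}\lm - \tfrac{25}{2^{10}}\bigr)(a_0 - a_2) + \tfrac{7}{2^{11}}(a_{-1} - a_3), $$
equivalently
$$ a_3 = a_{-1} + \tfrac{2^{11}}{7}\bigl[\bigl(\tfrac{3}{4}\lm - \tfrac{25}{2^{10}}\bigr)(a_0 - a_2) - (a_0 - a_2)\sg_1\bigr]. \qquad (\star) $$
Thus it suffices to express $(a_0 - a_2)\sg_1$ in the submodule $B := \la a_{-2}, a_{-1}, a_0, a_1, a_2, \sg_1, \sg_2^e, \sg_2^o \ra$.

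For the second step, I decompose $\sg_1$ into its $a_0$-eigenspace components as $\sg_1 = \tfrac{31\lm - 1}{32}\,a_0 - \tfrac{1}{32}\al_1 + \tfrac{7}{32}\bt_1$ (computed from Lemmas \ref{formulas} and \ref{with sg1}), and multiply term-by-term by $(a_0 - a_2)$. Each of $(a_0 - a_2)\al_1$ and $(a_0 - a_2)\bt_1$ is computable by expanding $\al_1,\bt_1$ using Lemma \ref{formulas} and the known products $a_i a_j$ at distances $\le 2$ from Lemmas \ref{sigma one}, \ref{sigma two}. The distance-$3$ quantity $\sg_3 := \sg(a_0, a_3) = \sg(a_{-1}, a_2)$ arises in products like $(a_0 - a_2) a_{-1} = \sg_1 - \sg_3 + \tfrac{1}{32}(a_0 - a_2)$, so in particular $(a_0-a_2)\gm_1 = \tfrac{1}{2}(\sg_3 - \sg_1)$. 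To eliminate $\sg_3$, I invoke the fusion identity $\al_1\al_2 \in M_0(a_0)$ (since $0\star 0 = 0$ by Definition \ref{def-seress}), which forces the $M_1(a_0)$ and $M_{1/4}(a_0)$-components of $\al_1\al_2$ to vanish. After expansion using the formulas of Lemma \ref{formulas} and the available products from Lemmas \ref{times sigma}--\ref{sigma sigma}, these vanishing conditions determine $\sg_3$, and hence $(a_0 - a_2)\sg_1$, as an explicit element of $B$.

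The principal obstacle is the extensive bookkeeping required to compute $\al_1\al_2$ and to verify that its projection conditions have a unique solution for $\sg_3$. The quadratic-in-$\lm$ terms appearing in the stated formula originate precisely from this elimination step, which involves products of quantities that are themselves linear in $\lm$. Once $\sg_3$ is resolved, substituting the resulting expression for $(a_0 - a_2)\sg_1$ into $(\star)$ and collecting coefficients against $\{a_{-2}, a_{-1}, a_0, a_1, a_2, \sg_2^o, \sg_2^e\}$ produces the claimed formula, with the specific coefficients $2^8 \lm - 6$, $\tfrac{2^{15}}{7}\lm^2 - \tfrac{2^8 \cdot 9}{7}\lm + \tfrac{2^7}{7}\mu + 3$, and $2^5 \cdot 7$ arising from the rational arithmetic.
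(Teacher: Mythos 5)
Your proof takes a genuinely different route from the paper, but it has a gap in its second half that the paper's approach is specifically designed to avoid.

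The paper's proof of Lemma~\ref{and a3} is a one-liner: it applies the flip to the expression for $\sg_1\sg_1$ from Lemma~\ref{sigma sigma}. Since $\sg_1$ is flip-invariant (Lemma~\ref{sigma one}), $\sg_1\sg_1 = \overline{\sg_1\sg_1}$; the flip sends $a_{-2}\mapsto a_3$, $a_i\mapsto a_{1-i}$ and $\sg_2^e\leftrightarrow\sg_2^o$, so the identity $\sg_1\sg_1 - \overline{\sg_1\sg_1}=0$ is \emph{linear in $a_3$} with entirely known coefficients, and can be rearranged directly. Crucially, $\sg_1\sg_1$ was computed in Lemma~\ref{sigma sigma} purely in terms of $\{a_{-2},\dots,a_2,\sg_1,\sg_2^e,\sg_2^o\}$ with no reference to $a_{\pm3}$ or higher products, so there is no circularity.

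Your reduction $(\star)$ --- applying $\tau_1$ to Lemma~\ref{times sigma} and subtracting --- is correct and is essentially equivalent symmetry bookkeeping. The problem is the second step. To close $(\star)$ you must compute $(a_0-a_2)\sg_1$ independently, and the route you propose ($\al_1\al_2\in M_0(a_0)$) does not close on itself. Expanding $\al_1\al_2$ from the formulas in Lemma~\ref{formulas} and Lemma~\ref{lem a0 sg2} produces the term $16\,\sg_1\sg_2^e$, as well as terms like $\sg_1 a_{\pm2}$ and $\sg_2^e a_{\pm1}$. At this stage of the development none of these is available: $\sg_1\sg_2^e$ is only computed later, in Lemma~\ref{lem mult closed}, using the resurrection principle, and that computation \emph{presupposes} Lemma~\ref{and a3} via the $a_i$-stability of $R_0W$ (Lemma~\ref{lem-ai-stable}). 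Likewise $a_{\pm2}\sg_1$ involve $a_{\pm3}$, i.e.\ the very quantity being solved for. So the vanishing condition you invoke is a single vector equation in several unknowns ($\sg_3$, $\sg_1\sg_2^e$, etc.), and you do not explain how the extraneous unknowns drop out. By contrast, the paper's choice of flip-invariant $\sg_1\sg_1$ ensures the only new symbol introduced on the flipped side is $a_3$. Unless you can show the $\sg_1\sg_2^e$-dependence cancels identically, the argument as stated does not establish the lemma.

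A minor point: $0\star 0=\{0\}$ comes from Corollary~\ref{cor-0-behaviour} (which rules out $1\in 0\star 0$) together with the $\frak V(4,3)$ table, not from the Seress condition of Definition~\ref{def-seress}.
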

\begin{proof}
	By Lemma \ref{sigma sigma}, we have an expression for $\sg_1\sg_1$,
	including in particular $a_{-2}$.
	Therefore we have an expression for $\overline{\sg_1\sg_1}$ including the term $a_3$.
	As $\overline{\sg_1\sg_1}=\bar\sg_1\bar\sg_1=\sg_1\sg_1$ by Lemma \ref{sigma one},
	from $\sg_1\sg_1 - \overline{\sg_1\sg_1} = 0$
	we can rearrange for the claimed expression for $a_3$.
\end{proof}

In the second half of this section,
we show that a certain set of size $8$ is sufficient
to span the universal $2$-generated Frobenius $\frak V(4,3)$-axial algebra $U$,
by finding the small number of products that are not yet computed.
The methods will be similar.

Set $R_0 = \QQ[\lm,\mu]$
and $W = \{a_{-2}, a_{-1}, a_0, a_1, a_2, \sg_1, \sg_2^e, \sg_2^o\}$.

\begin{lemma}
	\label{lem-T-invar}
	$R_0W$ is $T$-invariant.
\end{lemma}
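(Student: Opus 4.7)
The plan is to verify $T$-invariance on generators of $T$. Since $T = \la\tau_0, \text{flip}\ra$, it suffices to show that both $\tau_0$ and the flip send $R_0W$ into itself. The eight basis vectors in $W$ are arranged symmetrically around position $\frac{1}{2}$, so most of them are permuted among each other by these two involutions; only a small residue has to be rewritten.

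First I would handle $\tau_0$. Being a Miyamoto involution, $\tau_0$ is an honest $R$-linear automorphism of $M$, so it acts trivially on $R_0$ and it is enough to check $\tau_0(W)\subseteq W$. By Lemma \ref{action}, $\tau_0:a_i\mapsto a_{-i}$, which already permutes the five axes $a_{-2},a_{-1},a_0,a_1,a_2$ inside $W$. By Lemma \ref{sigma one}, $\sg_1$ is fixed by all of $T$; by Lemma \ref{sigma two}, $\sg_2^e$ and $\sg_2^o$ are $T_0$-invariant and hence fixed by $\tau_0$. So $\tau_0(W)=W$ as a set, giving $\tau_0(R_0W)=R_0W$.

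Next I would handle the flip $f$. This is a semiautomorphism whose coefficient homomorphism is the bar, so the check splits into (i) $\overline{R_0}=R_0$ and (ii) $f(W)\subseteq R_0W$. For (i), $\bar\lm=\lm$ by Lemma \ref{flipped lambdas} (since $\lm=\nu_1$ has odd-distance index) and $\bar\mu=\mu$ by Lemma \ref{no parity}; since $R_0=\QQ[\lm,\mu]$ and $\kk=\QQ$ is fixed, the bar is the identity on $R_0$. For (ii), the flip corresponds to $\hat f:i\mapsto1-i$ and therefore swaps $a_{-1}\leftrightarrow a_2$, $a_0\leftrightarrow a_1$, fixes $\sg_1$ (Lemma \ref{sigma one}), and interchanges $\sg_2^e\leftrightarrow\sg_2^o$ (Lemma \ref{sigma two}). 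All of these images lie in $W$; the only element of $W$ mapped outside $W$ is $a_{-2}\mapsto a_3$.

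Thus the entire content of the lemma reduces to knowing $a_3\in R_0W$, which is precisely what Lemma \ref{and a3} supplies: it expresses $a_3$ as a $\QQ[\lm,\mu]$-linear combination of $a_{-2},a_{-1},a_0,a_1,a_2,\sg_2^e,\sg_2^o$. Consequently $f(W)\subseteq R_0W$, and combined with (i) this gives $f(R_0W)=\overline{R_0}f(W)\subseteq R_0\cdot R_0W=R_0W$. There is no real obstacle beyond having Lemma \ref{and a3} available, since all coefficients appearing there are manifestly polynomial in $\lm$ and $\mu$ over $\QQ$; the key preparatory work was done earlier in showing that $\nu_k^e=\nu_k^o$ (Lemma \ref{no parity}), which is what makes a two-variable ring $R_0=\QQ[\lm,\mu]$ adequate rather than a larger ring.
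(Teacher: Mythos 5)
Your proof is correct and follows essentially the same route as the paper's: reduce to the two generators $\tau_0$ and the flip, observe that $\tau_0$ permutes $W$, that the flip sends $W$ into $W\cup\{a_3\}$, and invoke Lemma \ref{and a3} to conclude $a_3\in R_0W$. Your extra care in treating the flip as a semiautomorphism and verifying $\bar\lm=\lm$, $\bar\mu=\mu$ via Lemmas \ref{flipped lambdas} and \ref{no parity} is a welcome explicitness but not a different argument.
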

\begin{proof}
	Lemmas \ref{sigma one} and \ref{sigma two} already established
	the $T$-invariance of $\sg_1$ and $\{\sg_2^e,\sg_2^o\}$.
	Then, $\tau_0$ preserves $\{a_{-2},a_{-1},a_0,a_1,a_2\}$,
	while the flip maps it to $\{a_{-1},a_0,a_1,a_2,a_3\}$.
	Lemma \ref{and a3} shows that $a_3$ is in $R_0W$, so the flip also preserves $R_0W$.
	Now recall that $T$ is generated by $\tau_0$ and the flip.

	In particular, $R_0W$ contains all $a_i$ for $i\in\ZZ$,
	and since for all $i$ the products $a_i\sg$ and $a_i\sg_2^e,a_i\sg_2^o$
	are contained in the $T$-orbits $(a_0\sg)^T,(a_0\sg_2^e)^T,(a_0\sg_2^o)^T$ respectively,
	we can deduce also these expressions
	from Lemmas \ref{times sigma}, \ref{lem a0 sg2} and \ref{sigma sigma}.
\end{proof}

\begin{lemma}
	\label{lem-ai-stable}
	$R_0W$ is $a_i$-stable.
\end{lemma}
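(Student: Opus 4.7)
The plan is to use Lemma \ref{lem-T-invar} to reduce the claim to the single case $i=0$. Concretely, since $T$ acts transitively on the set $\{a_i\}_{i\in\ZZ}$ (via $\rho$ on each parity class, with the flip swapping the two parities), every $a_i$ is of the form $a_0^g$ for some $g\in T$. If I can show that $a_0\cdot R_0W\subseteq R_0W$, then for any $w\in R_0W$,
\[
a_i\, w \;=\; (a_0\cdot w^{g^{-1}})^{g} \;\in\; (R_0W)^g \;=\; R_0W,
\]
using $T$-invariance of $R_0W$. For this manoeuvre to be legitimate, the coefficient action of $g$ must fix $R_0$; this is automatic for the Miyamoto involutions $\tau_0,\tau_1$ (genuine algebra automorphisms), and for the flip it follows from $\bar\lm=\lm$ and $\bar\mu=\mu$ (the latter by Lemma \ref{no parity}). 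Thus $T$ fixes $R_0=\QQ[\lm,\mu]$ pointwise, and the reduction is valid.

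It then remains to check $a_0\cdot w\in R_0W$ for each of the eight generators $w\in W$. The products with the other axes are essentially the defining relations of the $\sg$-elements,
\[
a_0a_{\pm 1} \;=\; \sg_1+\tfrac{1}{32}(a_0+a_{\pm 1}),\qquad
a_0a_{\pm 2} \;=\; \sg_2^e+\tfrac{1}{32}(a_0+a_{\pm 2}),
\]
via Lemmas \ref{sigma one} and \ref{sigma two}, together with $a_0a_0=a_0$ by idempotency. The three remaining products $a_0\sg_1$, $a_0\sg_2^e$ and $a_0\sg_2^o$ are exactly the outputs of Lemmas \ref{times sigma}, \ref{lem a0 sg2} and \ref{second sigma}, each displayed there as a $\QQ[\lm,\mu]$-linear combination of elements of $W$.

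There is no substantive obstacle, as the hard computational work has already been absorbed into the preceding lemmas; the content of this lemma is purely assembly. The only conceptual point is the necessity of having included $\sg_2^o$ in $W$: it is precisely the $a_0\sg_2^o$ product of Lemma \ref{second sigma} that threatens to escape the span, and the fact that its expression stays inside $R_0W$ is what makes the set of size eight a spanning set stable under every axis multiplication.
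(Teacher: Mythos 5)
Your proof is correct and follows essentially the same approach as the paper: reduce to $i=0$ via the $T$-transitivity on the axes (together with the $T$-invariance of $R_0W$ from Lemma \ref{lem-T-invar}), then assemble the eight products $a_0 w$, $w\in W$, from Lemmas \ref{times sigma}, \ref{lem a0 sg2}, \ref{second sigma} and the defining relations of $\sg_1,\sg_2^e$. Your explicit remark that the reduction is legitimate because $T$ fixes $R_0=\QQ[\lm,\mu]$ pointwise (using $\bar\lm=\lm$ and Lemma \ref{no parity} for $\bar\mu=\mu$) is a worthwhile small addition that the paper leaves implicit.
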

\begin{proof}
	That is, $a_i w\in R_0W$ for all $w\in R_0W$ and $i\in \ZZ$.
	Without loss, we can assume that $w$ is in the spanning set $W$.
	Now all products $a_0w$ for $w\in W$ are known.
	Namely, $a_0a_0 = a_0,a_0a_{\pm1}=\sg_1+\frac{1}{32}(a_0+a_{\pm1}),a_0a_{\pm2}=\sg_2^e+\frac{1}{32}(a_0+a_{\pm2})$,
	$a_0\sg_1$ is recorded in Lemma \ref{times sigma},
	$a_0\sg_2^e$ in Lemma \ref{lem a0 sg2},
	and $a_0\sg_2^o$ in Lemma \ref{second sigma}.

	To complete the argument,
	as $T$ is transitive on the $a_i$,
	for all $i$ there exists $t_i$ such that $a_i = a_0^{t_i}$;
	then, for any $w$, $a_iw = {a_0}^{t_i}w = (a_0w^{t_i^{-1}})^{t_i}$,
	and as $w^{t_i^{-1}}\in R_0W$ by Lemma \ref{lem-T-invar},
	the righthandside of the expression is known.
	Here is the explicit list of formulas we use
	to express an arbitrary $a_iw$ in terms of $w'$ or $a_0w'$,
	for $i = -2,-1,1,2$ and $w,w'\in W$:
	\begin{gather}
		a_ia_i = a_i, \quad
		a_ia_{i\pm1} = \sg_1 + \frac{1}{32}(a_i+a_{i\pm1}), \\
		a_{2i}a_{2i\pm2} = \sg_2^e + \frac{1}{32}(a_{2i}+a_{2i\pm2}), \quad
		a_{2i+1}a_{2i+1\pm2} = \sg_2^o + \frac{1}{32}(a_{2i+1}+a_{2i+1\pm2}), \\
		a_{-2}a_1 = \overline{a_0a_3}, \quad
		a_{-2}a_2 = \overline{(\overline{a_0a_4})^{\tau_0}}, \text{where }
		a_4 = \overline{{a_3}^{\tau_0}}, \quad
		a_{-1}a_2 = (a_{-2}a_1)^{\tau_0}, \\
		a_1\sg_1 = \overline{a_0\sg_1}, \quad
		a_{-1}\sg_1 = (a_1\sg_1)^{\tau_0}, \quad
		a_2\sg_1 = \overline{a_{-1}\sg_1}, \quad
		a_{-2}\sg_1 = (a_2\sg_1)^{\tau_0}, \\
		a_1\sg_2^e = \overline{a_0\sg_2^o}, \quad
		a_{-1}\sg_2^e = (a_1\sg_2^e)^{\tau_0}, \quad
		a_2\sg_2^o = \overline{a_{-1}\sg_2^e}, \quad
		a_{-2}\sg_2^o = (a_2\sg_2^o)^{\tau_0}, \\
		a_1\sg_2^o = \overline{a_0\sg_2^e}, \quad
		a_{-1}\sg_2^o = (a_1\sg_2^o)^{\tau_0}, \quad
		a_2\sg_2^e = \overline{a_{-1}\sg_2^o}, \quad
		a_{-2}\sg_2^e = (a_2\sg_2^e)^{\tau_0}.
	\end{gather}
\end{proof}

\begin{lemma}
	\label{lem mult closed}
	$R_0W$ is closed under multiplication.
\end{lemma}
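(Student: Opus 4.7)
Plan. By Lemma \ref{lem-ai-stable}, any product involving at least one $a_i$ already lies in $R_0W$, so it suffices to handle $\sg\sg'$ with $\sg,\sg'\in\{\sg_1,\sg_2^e,\sg_2^o\}$. Lemma \ref{sigma sigma} disposes of $\sg_1\sg_1$, and the flip (Lemmas \ref{sigma one}, \ref{sigma two}, and \ref{lem-T-invar}) fixes $\sg_1$ while interchanging $\sg_2^e\leftrightarrow\sg_2^o$, so only three representatives remain: $\sg_1\sg_2^e$, $(\sg_2^e)^2$, and $\sg_2^e\sg_2^o$.

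For the first two I would mimic the proof of Lemma \ref{sigma sigma}. The Seress condition (which is available for $\frak V(4,3)$ by Corollary \ref{cor-0-behaviour} and Proposition \ref{prop-seress}) applied with the $0$-eigenvector $\al_2\in A_0(a_0)\cap R_0W$ gives
\begin{equation*}
(a_0a_i)\al_2 = a_0(a_i\al_2),\quad i=1,2.
\end{equation*}
Substituting $a_0a_1=\sg_1+\tfrac{1}{32}(a_0+a_1)$ and $a_0a_2=\sg_2^e+\tfrac{1}{32}(a_0+a_2)$, and then $\al_2=-4\sg_2^e+(3\mu-\tfrac{1}{8})a_0+\tfrac{7}{16}(a_2+a_{-2})$, the left-hand side contains the desired product $\sg_1\sg_2^e$ (respectively $(\sg_2^e)^2$) with coefficient $-4$, while the right-hand side lies in $R_0W$ by two applications of Lemma \ref{lem-ai-stable} (first to $a_i\al_2$, then to $a_0(a_i\al_2)$). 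Rearranging solves for the product.

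For $\sg_2^e\sg_2^o$ I would decompose $\sg_2^o$ with respect to $a_0$. Since $\sg_2^o$ is $\tau_0$-fixed (Lemma \ref{sigma two}), it lies in $A_1\oplus A_0\oplus A_{1/4}$; write $\sg_2^o=ca_0+u_o+v_o$ with $u_o\in A_0(a_0)$ and $v_o\in A_{1/4}(a_0)$. From $a_0\sg_2^o=ca_0+\tfrac{1}{4}v_o$ one solves $u_o=\sg_2^o+3ca_0-4a_0\sg_2^o$; crucially, Lemma \ref{second sigma} expresses $a_0\sg_2^o$ using only $\sg_1$, $\sg_2^e$, and the $a_i$ (no $\sg_2^o$ term), so $u_o-\sg_2^o$ is an $R_0$-combination of already-controlled generators. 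Applying the Seress condition with $u_o$ gives $(a_0a_2)u_o=a_0(a_2u_o)\in R_0W$, and using $a_0u_o=0$ this yields $\sg_2^e u_o\in R_0W$. Writing $\sg_2^e u_o=\sg_2^e\sg_2^o+\sg_2^e(u_o-\sg_2^o)$, the second summand is a polynomial combination of $\sg_2^e\sg_1$, $(\sg_2^e)^2$, and $\sg_2^e a_i$, each of which is in $R_0W$ by the previous step and Lemma \ref{lem-ai-stable}. Hence $\sg_2^e\sg_2^o\in R_0W$, completing the closure.

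The main obstacle is purely computational: as in Lemma \ref{sigma sigma}, the Seress identity is deliberately chosen so that the sought product appears with a nonzero coefficient on only one side of the equation, but verifying that coefficients extract cleanly demands careful expansion. There is also a mild logical subtlety: the $\sg_2^e\sg_2^o$ step uses that $(\sg_2^e)^2$ is already known to lie in $R_0W$, so the three cases must be handled in the order above.
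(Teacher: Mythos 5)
Your proposal is correct, but it takes a genuinely different route from the paper on both counts. For the representatives $\sg_1\sg_2^e$ and $(\sg_2^e)^2$, the paper invokes the resurrection principle (Lemma \ref{resurrection}) rather than the Seress condition: setting $x = 16\sg_1\sg_2^e$, one notes that $-\al_1\bt_2 = x + b_{\frac{1}{4}}$ lies in $A_{\frac{1}{4}}$ and $\al_1\al_2 = x + b_0$ lies in $A_0$ with $b_{\frac{1}{4}}, b_0 \in R_0W$, whence $x = 4a_0(b_{\frac{1}{4}}-b_0) - b_{\frac{1}{4}} \in R_0W$, and analogously for $(\sg_2^e)^2$ using $\al_2$ and $\bt_2$. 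Your replay of the Seress-condition trick from Lemma \ref{sigma sigma} with $\al_2\in A_0(a_0)$ is equally valid, and requires no machinery beyond Proposition \ref{prop-seress}. The more substantial divergence is that the paper never treats $\sg_2^e\sg_2^o$ as a case at all: Lemma \ref{and a3} rewrites $\sg_2^o$ as an $R_0$-linear combination of $\sg_2^e$ and the $a_i$ (this is precisely why that lemma was proved), so every product $\sg_2^o w$ collapses to $\sg_2^e w$ and $a_i w$, and only the two products above remain. Your workaround for $\sg_2^e\sg_2^o$ --- decomposing $\sg_2^o$ over the $a_0$-eigenspaces, extracting $u_o \in A_0(a_0)$ via the known $a_0\sg_2^o$, and applying Seress with $u_o$ --- is correct and self-contained, but it misses that Lemma \ref{and a3} makes $\sg_2^o$ redundant. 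The paper's approach is shorter because the elimination of $\sg_2^o$ propagates; yours needs the extra eigenspace decomposition and the order-of-cases caveat you rightly flag at the end.
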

\begin{proof}
	After Lemma \ref{lem-ai-stable},
	we have that $a_iw\in R_0W$ for all $a_i$ and $w\in W$,
	so we only need to find products $vw$ for $v,w\in\{\sg_1,\sg_2^e,\sg_2^o\}$.
	We already have $\sg_1\sg_1$ from Lemma \ref{sg1 with sg1}.
	Also, by Lemma \ref{and a3},
	we may rewrite $\sg_2^o$ as a sum of $a_i$ and $\sg_1,\sg_2^e$.
	Hence, for any $w\in W$,
	an expression for $\sg_2^ow$ will follows from the other results.
	So we only now need to find $\sg_1\sg_2^e$ and $\sg_2^e\sg_2^e$.
	
	We make use of Lemma \ref{resurrection},
	the hypothesis of which hold with $R_0W$ in place of $B$
	by Lemma \ref{lem-ai-stable}.
	We relied on \cite{gap} in calculating these expressions.
	If $x = 16\sg_1\sg_2^e$
	then $b_\frac{1}{4} = -\al_1\bt_2 - x\in R_0W$, $b_0 = \al_1\al_2 -x\in R_0W$,
	and $y = x + b_\frac{1}{4}\in A_\frac{1}{4}$, $z = x + b_0\in A_0$,
	so Lemma \ref{resurrection} gives that
	$x = 4a(b_\frac{1}{4}-b_0)-b_\frac{1}{4}\in R_0W$,
	and the expression is
	\begin{equation}
		\begin{aligned}
			\label{ap-sg1sg2}
			\sg_1\sg_2^e & = 
			\frac{1}{3}\left(2^8\lm^3-\frac{27}{2}\lm^2+\lm\mu+\frac{17}{2^7}\lm-\frac{19}{2^9}\mu+\frac{19}{2^{15}}\right)a_0 \\
			& + \frac{1}{3}\left(14\lm^2-\frac{203}{2^8}\lm+\frac{665}{2^{16}}\right)\left(a_1+a_{-1}\right)
			+ \frac{1}{3}\left(\frac{7}{2^7}\lm-\frac{133}{2^{16}}\right)\left(a_2+a_{-2}\right) \\
			& + \frac{1}{3}\left(-2^4\cdot19\lm^2+\frac{41}{2}\lm+\frac{51}{16}\mu-\frac{197}{2^9}\right)\sg_1
			+ \frac{1}{3}\left(-\frac{17}{8}\lm+\frac{11}{2^8}\right)\sg_2^e
			+ \left(-\frac{7}{8}\lm+\frac{49}{2^{11}}\right)\sg_2^f.
		\end{aligned}
	\end{equation}
	Also, if $x = 16\sg_2^e\sg_2^e$
	then $b_\frac{1}{4} = -\al_2\bt_2 - x\in R_0W$, $b_0 = \al_2\al_2 -x\in R_0W$,
	so likewise Lemma \ref{resurrection} gives that $x\in R_0W$;
	we find that $\sg_2^e\sg_2^e$ is equal to
	\begin{equation}
		\begin{aligned}
			\label{ap-sg2sg2}
			 & \left(2^{19}\cdot5\lm^4-\frac{2^7\cdot6407}{3}\lm^3-2^7\cdot85\lm^2\mu+\frac{20303}{2}\lm^2+\frac{2329}{6}\lm\mu+\frac{3}{2}\mu^2-\frac{61409}{2^7\cdot3}\lm-\frac{5315}{2^9\cdot3}\mu+\frac{89069}{2^{15}\cdot3}\right)a_0 \\
			 & + \left(2^9\cdot7\lm^3-\frac{791}{3}\lm^2+\frac{2317}{2^7\cdot3}\lm-\frac{8645}{2^{16}\cdot3}\right)\left(a_1+a_{-1}\right)
			 + \left(-49\lm^2+\frac{343}{2^6\cdot3}\lm+\frac{21}{2^8}\mu-\frac{3563}{2^{16}\cdot3}\right)\left(a_2+a_{-2}\right) \\
			 & + \left(-2^{20}\cdot3\lm^4+2^{14}\cdot45\lm^3-2^{12}\cdot3\lm^2\mu-\frac{2^4\cdot7523}{3}\lm^2+2^6\cdot7\lm\mu+\frac{4819}{6}\lm-\frac{65}{16}\mu-\frac{65}{12}\right)\sg_1 \\
			 & + \left(-2^{14}\cdot3\lm^3-2^4\cdot99\lm^2-2^6\cdot3\lm\mu+\frac{2837}{24}\lm+\frac{47}{16}\mu-\frac{4079}{2^{10}\cdot3}\right)\sg_2^e
			 + \left(-2^5\cdot21\lm^2+\frac{49}{2}\lm-\frac{455}{2^{11}}\right)\sg_2^f.
		\end{aligned}
	\end{equation}

	We have now computed all products in $R_0W$.
\end{proof}

\begin{lemma}
	The bilinear form $\la,\ra$ is $R_0$-valued on $R_0W$.
\end{lemma}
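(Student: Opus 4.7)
The plan is to prove the claim by direct calculation. By $R_0$-bilinearity it suffices to show $\la x,y\ra \in R_0$ for all $x,y$ in the spanning set $W = \{a_{-2},a_{-1},a_0,a_1,a_2,\sg_1,\sg_2^e,\sg_2^o\}$; since the form is symmetric (Proposition \ref{lem-form-sym}), this reduces to finitely many entries of a symmetric $8\times 8$ matrix, which I would organise into three stages: axis--axis, axis--$\sg$, and $\sg$--$\sg$.

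For the axis--axis stage, $\la a_i,a_j\ra = \nu_{|i-j|}$ by Lemma \ref{no parity}, and $\nu_0 = 1$, $\nu_1 = \lm$, $\nu_2 = \mu$ lie in $R_0$ by definition. The values $\nu_3$ and $\nu_4$ cannot yet be evaluated; I would postpone them to the end of the axis--$\sg$ stage.

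For the axis--$\sg$ stage, Lemma \ref{with sg1} gives $\la a_i,\sg_1\ra \in R_0$ directly. For $\la a_i,\sg_2^e\ra$ with $a_i \in W$, I would substitute $\sg_2^e = a_0 a_2 - \frac{1}{32}(a_0+a_2)$ and apply associativity to obtain
\begin{equation}
	\la a_i,\sg_2^e\ra = \la a_i a_0, a_2\ra - \frac{1}{32}(\nu_{|i|} + \nu_{|i-2|}),
\end{equation}
then expand $a_i a_0$ using the distance-$\le 2$ product formulas available at the start of Section \ref{sec-cal}. A short computation shows that whenever $\nu_3$ or $\nu_4$ appears inside $\la a_i a_0,a_2\ra$ (namely for $i=-1$ and $i=-2$), it is cancelled against the matching contribution from $\frac{1}{32}(\nu_{|i|}+\nu_{|i-2|})$. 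The same recipe, using $\sg_2^o = a_{-1} a_1 - \frac{1}{32}(a_{-1}+a_1)$, handles $\la a_i,\sg_2^o\ra$. With these in hand, pairing $\la a_0,\cdot\ra$ with the expression for $a_3$ in Lemma \ref{and a3} yields $\nu_3 \in R_0$; applying $\tau_0$ to that same lemma expresses $a_{-3}$ as an element of $R_0 W$, whence $\nu_4 = \la a_1, a_{-3}\ra$ lies in $R_0$ as it involves only the $\nu_k$ for $k\le 3$ and the values $\la a_1,\sg_2^e\ra,\la a_1,\sg_2^o\ra$ already shown to be there.

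For the $\sg$--$\sg$ stage, Lemma \ref{sg1 with sg1} provides $\la \sg_1,\sg_1\ra$. The remaining five pairs I would handle uniformly: expand one factor via its definition $\sg = a_i a_j - \frac{1}{32}(a_i+a_j)$ and move the product across the form via associativity, reducing each to a combination of $\la a_k,\sg'\ra$ and $\nu_\ell$ values already proven to lie in $R_0$. The main obstacle is ordering the axis--$\sg$ computations: superficially the intermediate expressions involve the unknown $\nu_3$ and $\nu_4$, and one must verify that these contributions cancel before $\nu_3$ and $\nu_4$ themselves can be extracted from Lemma \ref{and a3}; the remainder of the verification is mechanical.
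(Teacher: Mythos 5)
Your overall plan — reduce by $R_0$-bilinearity to the values of the form on the spanning set $W$ and then compute — is the same as the paper's, but you miss the key simplification that makes the paper's computation short. After reducing (via associativity and the closure of $R_0W$ under multiplication) to expressions $\la a_i,w\ra$ with $w\in W$, the paper invokes the $T$-invariance of $R_0W$ (Lemma \ref{lem-T-invar}) together with $\bar\lm=\lm$, $\bar\mu=\mu$ to collapse to $i=0$. Then only the eight values $\la a_0,w\ra$, $w\in W$, need computing, each a one-line calculation; $\nu_3$ and $\nu_4$ are recorded afterwards purely for completeness.

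Your attempt to compute $\la a_i,w\ra$ for all $i\in\{-2,\dotsc,2\}$ directly creates a gap at the axis--$\sg_2$ stage. The formula $\la a_i,\sg_2^e\ra=\la a_ia_0,a_2\ra-\frac{1}{32}(\nu_{|i|}+\nu_{|i-2|})$ followed by expansion of $a_ia_0$ does cancel the $\nu_3,\nu_4$ contributions as you say, but for the endpoint cases $i=\pm2$ \emph{everything} cancels. For $i=-2$, substituting $a_{-2}a_0=\sg_2^e+\frac{1}{32}(a_{-2}+a_0)$ yields $\la a_{-2},\sg_2^e\ra=\la\sg_2^e,a_2\ra+\frac{1}{32}(\nu_4+\nu_2)-\frac{1}{32}(\nu_2+\nu_4)=\la a_2,\sg_2^e\ra$, a tautology, and $i=2$ returns to itself in the same way. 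The analogous problem occurs at one of $i=\pm1$ for $\sg_2^o$, depending on which factor of $a_{-1}a_1$ you move the axis onto; and this is fatal to your proposed extraction of $\nu_4$, which requires $\la a_1,\sg_2^o\ra$, precisely a quantity your recipe fails to produce. The fix is to move the axis onto the \emph{other} factor when the two coincide — e.g.\ $\la a_2,a_0a_2\ra=\la a_2a_2,a_0\ra=\la a_2,a_0\ra=\mu$ — or, cleaner, to use $T$-invariance and reduce to $i=0$ from the start as the paper does, which also makes the separate handling of $\nu_3,\nu_4$ unnecessary.
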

\begin{proof}
	After Lemma \ref{no parity},
	we have that $\la x,y\ra^t = \la x,y\ra$ for all $t\in T$.
	By linearity, it suffices to consider the values of the form on monomials;
	furthermore, since the form is associating,
	any expression $\la x,y\ra$ can be rewritten as a sum of expressions
	$\la a_i,w\ra$ for $w\in W$.
	Now we may assume that $i=0$.
	By definition, $\la a_0,a_{\pm1}\ra = \lm$ and $\la a_0,a_{\pm2}\ra = \mu$.
	By Lemma \ref{with sg1}, $\la a_0,\sg_1\ra$ is known and lies in $R_0$.
	With 
	\begin{align}
		\la a_0, \sg_2^e \ra
		&= \la a_0, a_0 a_2 \ra-\frac{1}{2^5}\left(\la a_0,a_0 \ra + \la a_0,a_2 \ra\right) 
		= \frac{1}{2^5}(31\mu-1),\\
		\la a_0, \sg_2^o \ra &= \la a_0, a_{-1} a_1 \ra - \frac{1}{2^5}(\la a_0, a_-1 \ra + \la a_0, a_1 \ra ) \\
		&= \la \sg_1 +\frac{1}{2^5}(a_0+a_1), a_{-1} \ra - \frac{1}{2^4}\lm \\
		&= \frac{1}{2^5}\left(30\lm+\mu-1\right)
	\end{align}
	we have made all the necessary calculations.
	For additional completeness, we also present
	\begin{align}
		& \begin{aligned}
		\nu_3 = {} & \la a_{-2},a_1\ra = \la a_{-1},a_2\ra = \la a_0,a_3\ra \\
			= {} & -\frac{1}{7}(2^{15}\lm^3-2^{12}\cdot3^2\lm^2+2^7\cdot15\lm\mu+2169\lm+33\mu-33), \\
		\end{aligned} \\
		& \begin{aligned}
		\nu_4 = {} & \la a_{-2},a_2\ra = \la a_0,a_4\ra
			= \frac{1}{7}(2^{23}\lm^4-2^{15}\cdot293\lm^3+2^{16}\cdot7\lm^2\mu\\ & + 2^{12}\cdot189\lm^2 -2^7\cdot5\lm\mu-2^7\mu^2-2^7\cdot155\lm-21\mu+156).
		\end{aligned}
	\end{align}
	It follows that the ring $R$,
	the extension of $\QQ$ by all possible values of the form $\la,\ra$,
	is equal to $R_0 = \QQ[\lm,\mu]$.
\end{proof}

\begin{theorem}
	\label{thm-alg-determined}
	The universal $2$-generated Frobenius $\frak F$-axial algebra $U$
	is equal to $R_0W$,
	and the products and values of the form on $U$ have been determined.
	\qed
\end{theorem}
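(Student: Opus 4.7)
The plan is to observe that this theorem is essentially a summary corollary of the lemmas that precede it in Section \ref{sec-cal}, and the bulk of the work is already done. First I would argue the set-theoretic equality $U = R_0 W$. The universal algebra $U$ is generated as an algebra by its two marked axes $a_0$ and $a_1$, both of which lie in $W \subseteq R_0 W$. By Lemma \ref{lem mult closed}, $R_0 W$ is closed under the algebra product, so an induction on the length of monomial products shows that the entire subalgebra of $U$ generated by $\{a_0, a_1\}$ is contained in $R_0 W$; since this subalgebra is all of $U$, we obtain $U \subseteq R_0 W$. The reverse inclusion $R_0 W \subseteq U$ is automatic, since by construction $W$ consists of specific elements of $U$ and $R_0$ is a subring of the coefficient ring of $U$.

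Next I would verify that the coefficient ring of $U$ equals $R_0$, so that the claim of the theorem is justified in its entirety. By the preceding lemma, the form $\la\,,\ra$ takes values in $R_0$ on $R_0 W = U$. Since $U$ is rigid, its coefficient ring is generated over $\kk = \QQ$ by the values of the form on the submagma generated by $\{a_0, a_1\}$, and hence it is contained in $R_0$. The reverse containment is immediate: $\lm = \la a_0, a_1\ra$ and $\mu = \la a_0, a_2\ra$ are by definition elements of the coefficient ring of $U$, so $R_0 = \QQ[\lm, \mu]$ is contained in it.

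Finally, to confirm that all products and form values have been determined, I would catalogue the computations carried out. Products of the form $a_i a_j$ with $|i-j|\le 2$ are given by the definitions of $\sg_1$, $\sg_2^e$, $\sg_2^o$; Lemma \ref{and a3} expresses $a_3$ (and by the action of $T$ every other $a_i$) as an element of $R_0 W$; products $a_i w$ for $w \in \{\sg_1, \sg_2^e, \sg_2^o\}$ are recorded in Lemmas \ref{times sigma}, \ref{lem a0 sg2}, \ref{second sigma} and translated to arbitrary $i$ by the explicit formulas in the proof of Lemma \ref{lem-ai-stable}; the remaining products $\sg_j \sg_k$ for $j, k\in\{1,2\}$ are given by Lemma \ref{sigma sigma} and the resurrection-principle formulas \eqref{ap-sg1sg2}, \eqref{ap-sg2sg2} in Lemma \ref{lem mult closed}. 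The values of the form are compiled in the last lemma.

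There is no substantive obstacle remaining at this point of the argument. The heavy lifting has already been done, most notably in Lemma \ref{lem mult closed}, where the products $\sg_1 \sg_2^e$ and $\sg_2^e \sg_2^e$ were obtained via Lemma \ref{resurrection} applied with $B = R_0 W$, together with a computer algebra calculation; the present theorem merely assembles these conclusions. The proof is therefore a one-line appeal to the lemmas above and warrants only the \qed that the authors have appended.
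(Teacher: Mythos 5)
Your proposal is correct and matches the paper's (tacit) proof exactly: the theorem carries a \verb|\qed| in the statement itself precisely because it is an assembly of Lemmas \ref{lem mult closed}, \ref{lem-ai-stable}, \ref{lem-T-invar}, and the unnumbered lemma on $R_0$-valuedness of the form, and your write-up simply makes that assembly explicit. One small ordering remark: in your first paragraph you deduce $U\subseteq R_0W$ from the fact that $R_0W$ is a subalgebra containing $a_0,a_1$, but strictly speaking $R_0W$ is only known to be an $R_0$-module closed under products, whereas $U$ is the $R$-subalgebra generated by $a_0,a_1$; so the containment requires first knowing $R=R_0$, which is the content of the preceding lemma (and of your second paragraph). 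Since that lemma already establishes $R=R_0$ before the theorem is stated, nothing is amiss, but a cleaner order is: (i) the submagma generated by $a_0,a_1$ lies in $R_0W$ by closure; (ii) hence form values on the submagma lie in $R_0$, and rigidity gives $R=R_0$; (iii) therefore $U=RW=R_0W$.
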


\section{Sakuma's theorem}
\label{sec-sakuma}

In Theorem \ref{thm-alg-determined}, we found that $U = R_0W$
is the universal $2$-generated Frobenius $\frak V(4,3)$-axial algebra.
We can certainly extract additional information
to describe both $R = R_0$ and $U$ in more detail.

We take some relations arising from the associativity of the bilinear form.
\begin{lemma}
	\label{lem-assoc-pol}
	The following expressions must be zero:
	\begin{align}
		& \begin{aligned}
		p_1 & = \la a_{-1} a_{-2}, a_1 \ra - \la a_{-1}, a_{-2} a_1 \ra \\
			& = \lm^4 -\frac{71}{2^6}\lm^3 + \frac{5}{2^8}\lm^2\mu +\frac{45}{2^9}\lm^2 +\frac{139}{2^{15}}\lm\mu +\frac{1}{2^{14}}\mu^2 -\frac{75}{2^{15}}\lm -\frac{167}{2^{21}}\mu +\frac{39}{2^{21}}
		\end{aligned} \\
		& \begin{aligned}
		p_2 & = \la a_{-2}, a_1 \ra - \la a_{-2}, a_{-2} a_1 \ra \\
			& = \lm^5 -\frac{577}{2^9}\lm^4 +\frac{25}{2^9}\lm^3\mu +\frac{1347}{2^{14}}\lm^3 -\frac{389}{2^{17}}\lm^2\mu +\frac{23}{2^{17}}\lm\mu^2 -\frac{105}{2^{16}}\lm^2 +\frac{5183}{2^{24}}\lm\mu \\
			& + \frac{87}{2^{24}}\mu^2 -\frac{63}{2^{24}}\lm -\frac{2901}{2^{29}}\mu +\frac{117}{2^{29}}
		\end{aligned}
    \end{align}
\end{lemma}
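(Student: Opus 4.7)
The plan is to verify that each $p_i$, when evaluated via the explicit products and form values of Section \ref{sec-cal}, reduces to the claimed polynomial in $\lm,\mu$; because the associativity of $\la,\ra$ together with idempotency forces the difference to vanish, these polynomials will give genuine relations on the coefficient ring.

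First I would establish the vanishing abstractly. For $p_1$, associativity gives directly $\la a_{-1}a_{-2},a_1\ra=\la a_{-1},a_{-2}a_1\ra$, so $p_1=0$ in $R$. For $p_2$, since $a_{-2}$ is an axis hence idempotent, $a_{-2}a_{-2}=a_{-2}$, and associativity yields $\la a_{-2},a_1\ra=\la a_{-2}a_{-2},a_1\ra=\la a_{-2},a_{-2}a_1\ra$, so $p_2=0$. This is what makes the lemma meaningful: each stated polynomial, once computed independently using the multiplication table, must vanish as a polynomial identity in $\QQ[\lm,\mu]$, thereby producing nontrivial relations.

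Next I would compute both sides of each identity as elements of $R_0=\QQ[\lm,\mu]$. The adjacent product $a_{-1}a_{-2}=\sg_1+\tfrac{1}{32}(a_{-1}+a_{-2})$ is immediate, and then $\la a_{-1}a_{-2},a_1\ra$ is a $\QQ$-linear combination of $\nu_1,\nu_2,\la a_1,\sg_1\ra$, each known from the end of Section \ref{sec-cal}. The distance-$3$ product $a_{-2}a_1$ requires more work: by the transitivity observations codified in Lemma \ref{lem-ai-stable}, $a_{-2}a_1=\overline{a_0a_3}$, and I would expand $a_0a_3$ by substituting the expression for $a_3$ from Lemma \ref{and a3} and then using the known products $a_0a_{\pm 1}$, $a_0a_{\pm 2}$, $a_0\sg_1$ (Lemma \ref{times sigma}), $a_0\sg_2^e$ (Lemma \ref{lem a0 sg2}), $a_0\sg_2^o$ (Lemma \ref{second sigma}), finally applying the flip. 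Pairing the result with $a_{-1}$ or $a_{-2}$ reduces to an $R_0$-combination of $\nu_k$ (for $k\leq 4$) and $\la a_i,\sg_1\ra,\la a_i,\sg_2^e\ra,\la a_i,\sg_2^o\ra$, all of which are recorded as explicit polynomials in $\lm,\mu$. Similarly $\la a_{-2},a_1\ra=\nu_3$ is an explicit cubic in the generators.

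Subtracting the two sides and collecting terms then produces the stated $p_1$ and $p_2$. The main obstacle is not conceptual but bookkeeping: the intermediate expansions involve denominators with high powers of $2$ and monomials of total degree up to $5$ in $\lm,\mu$, so the computation is unwieldy by hand and in practice is carried out with a computer algebra system, consistent with the use of \cite{gap} in Lemma \ref{lem mult closed}. Every individual step, however, is a direct appeal to a formula already established, and the only ingredients beyond the multiplication table are $a_i$-stability of $R_0W$ (Lemma \ref{lem-ai-stable}) and the $T$-invariance of the form (Lemma \ref{no parity}).
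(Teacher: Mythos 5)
Your proposal is correct and follows essentially the same route as the paper: conclude that each $p_i$ vanishes abstractly, then evaluate both sides by computer algebra using the Section~\ref{sec-cal} multiplication table. You in fact supply a detail the paper leaves implicit, namely that $p_2=0$ also uses the idempotency $a_{-2}a_{-2}=a_{-2}$ and not associativity alone.
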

\begin{proof}
	That $p_1,p_2$ are zero is immediate from the associativity of the form.
	We carried out the evaluation of the form in \cite{gap}.
\end{proof}

Set $\lm',\mu'$ to be two formal variables,
$R' = \QQ[\lm',\mu']$ their polynomial ring over $\QQ$,
and $p_1',p_2'\in R'$ the images of $p_1,p_2$ under substitution of $\lm',\mu'$ for $\lm,\mu$.
(Then $R'$ differs from $R = R_0 = \QQ[\lm,\mu]$
in that $\lm$ and $\mu$ satisfy additional relations
enforced by the axioms of Frobenius axial algebras,
whence $R$ is a quotient of $R'$.)
\begin{lemma}
	\label{lem-ideal}
	The ideal $I = (p_1',p_2')\normal R'$ is radical;
	the quotient $R'/I$ is $9$-dimensional,
	corresponding to the following points $(\lm',\mu')$ in $\QQ^2$:
	$$ \left(1,1\right),
	\left(0,1\right),
	\left(\frac{1}{8},1\right),
	\left(\frac{1}{64},\frac{1}{64}\right),
	\left(\frac{13}{2^8},\frac{13}{2^8}\right),
	\left(\frac{1}{32},0\right),
	\left(\frac{1}{64},\frac{1}{8}\right),
	\left(\frac{3}{2^7},\frac{3}{2^7}\right),
	\left(\frac{5}{2^8},\frac{13}{2^8}\right).
	$$
	Hence $R'/I\cong\QQ^9$.
\end{lemma}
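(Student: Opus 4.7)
The plan is to show that $I$ is a zero-dimensional radical ideal of $R'$ whose zero set is precisely the nine listed points, and then invoke the Chinese Remainder Theorem. First, I would verify directly by substitution that each of the nine pairs $(\lm',\mu')$ listed is a common zero of $p_1'$ and $p_2'$. This places the nine distinct maximal ideals $\frak m_i = (\lm' - \lm'_i, \mu' - \mu'_i)$ above $I$, and in particular gives the inclusion $I \subseteq J := \bigcap_{i=1}^9 \frak m_i$. Since the nine residue fields are all $\QQ$ and the $\frak m_i$ are pairwise coprime, the Chinese Remainder Theorem yields $R'/J \cong \prod_{i=1}^9 R'/\frak m_i \cong \QQ^9$, so $\dim_\QQ R'/J = 9$.

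Next I would establish the reverse bound $\dim_\QQ R'/I \leq 9$ by a Gröbner basis computation (say in GAP, as in the preceding lemmas). Concretely, compute a Gröbner basis $G$ of $I$ with respect to a graded monomial order on $\QQ[\lm',\mu']$, and verify that the set of monomials not divisible by any leading monomial of $G$ has exactly nine elements. Since $I$ and $J$ are both contained in $R'$ with $I \subseteq J$, combining the two dimension statements shows that the surjection $R'/I \twoheadrightarrow R'/J$ is an isomorphism, whence $I = J$. Thus $I$ is the intersection of nine distinct maximal ideals with residue field $\QQ$, so $I$ is radical, the nine listed points are exactly the common zeros of $p_1'$ and $p_2'$, and $R'/I \cong \QQ^9$ as claimed.

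The substantive step is producing the upper bound $\dim_\QQ R'/I \leq 9$. Na\"ive estimates are too weak: B\'ezout gives $\deg(p_1')\cdot\deg(p_2') = 4\cdot 5 = 20$, which accounts for neither the drop to nine affine points nor solutions at infinity. Consequently one really does need an effective computation (Gröbner basis or, equivalently, computing the resultant $\op{Res}_{\mu'}(p_1',p_2') \in \QQ[\lm']$ and then the fiber ideal over each root), rather than a purely degree-theoretic argument. Once that is in hand, the radicality and the explicit isomorphism $R'/I \cong \QQ^9$ follow formally from the matching of the nine distinct rational points with the computed length.
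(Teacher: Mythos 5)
Your argument is correct and logically tight, but it is organized a bit differently from what the paper does. The paper simply feeds $p_1', p_2'$ into \textsc{Magma} and reads off four separate facts from the outputs of \texttt{IsRadical}, \texttt{Dimension} (of the quotient ring), \texttt{Dimension} (of the scheme), and \texttt{Points}; radicality in particular is established by trusting the black-box \texttt{IsRadical} call. You instead reduce the lemma to a single computational input, the upper bound $\dim_\QQ R'/I \le 9$ via a Gr\"obner/standard-monomial count, together with the hand verification that the nine listed rational points are common zeros of $p_1'$ and $p_2'$; from these two facts, the Chinese Remainder Theorem for the nine pairwise-coprime maximal ideals $\frak m_i$ and the forced equality $I = \bigcap_i \frak m_i$ deliver radicality, the identification of $V(I)$, and the isomorphism $R'/I \cong \QQ^9$ as formal consequences. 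This buys you a proof that leans on the computer algebra system for only one elementary computation rather than several, and makes the logical dependencies explicit; the cost is a slightly longer argument than the paper's terse listing of \textsc{Magma} output. Both routes are ultimately machine-assisted at the same spot (a Gr\"obner basis of $(p_1', p_2')$), and your remark that a naive B\'ezout bound of $20$ is uselessly weak here is accurate. One small point of rigor worth stating explicitly: you need the finite-dimensionality of $R'/I$ (equivalently, that $I$ is zero-dimensional) before the inequality $\dim_\QQ R'/I \le 9$ even makes sense as a statement about a vector space dimension, but your Gr\"obner computation establishes exactly that.
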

\begin{proof}
	Using \cite{magma},
	the following functions give the above results:
    \begin{verbatim}
	 	R_0<lm,mu> := PolynomialRing(Rationals(),2);
	 	p_1 := lm^4-71/64*lm^3+5/256*lm^2*mu+45/512*lm^2+139/32768*lm*mu+...
	 	p_2 := lm^5-577/512*lm^4+25/512*lm^3*mu+1347/16384*lm^3-...
	 	I := Ideal([p_1,p_2]);
	 	IsRadical(I);
	 	> true
	 	Q := quo< R_0 | [p_1,p_2] >;
	 	Dimension(Q);
	 	> 9
	 	Sch := Scheme(Spec(R_0), [p_1,p_2]);
	 	Dimension(Sch);
	 	> 0
	 	Points(Sch);
	 	> {@ (0, 1), (1/64, 1/64), (1/64, 1/8), (5/256, 13/256), (3/128, 3/128),
	 		(1/32,0), (13/256, 13/256), (1/8, 1), (1, 1) @}
		\end{verbatim}
	We are grateful to Miles Reid for instructing us in making this calculation.
\end{proof}

Now we want to define particular quotients of $U$
corresponding to the above solutions for $\lm'$ and $\mu'$.
Set $U_i = M(\{\lm-\lm_i,\mu-\mu_i\},\emptyset)$
as in Section \ref{sec-uni-alg},
for some pair $(\lm_i,\mu_i)$ from Lemma \ref{lem-ideal}.
We will call this quotient the evaluation of $U$ for $\lm=\lm_i,\mu=\mu_i$
and denote the associated mapping by $U\to U_i$ by $\phi_i$.
Clearly either $U_i$ is trivial or its ring is $\QQ$.

\begin{lemma}
	\label{lem ns quots}
	Every Norton-Sakuma algebra $n_iX_i$
	is a quotient of the suitable evaluation $U_i$ of $U$.
	The correspondence is recorded below,
	as are the dimensions of the $n_iX_i$.
	\renewcommand{\arraystretch}{1.75}
	\begin{center}
	\begin{tabular}{|r||c|c|c|c|c|c|c|c|c|c|}
		\hline
			$(\lm_i,\mu_i)$ &
			$\left(1,1\right)$ &
			$\left(0,1\right)$ &
			$\left(\frac{1}{8},1\right)$ &
			$\left(\frac{1}{64},\frac{1}{64}\right)$ &
			$\left(\frac{13}{2^8},\frac{13}{2^8}\right)$ &
			$\left(\frac{1}{32},0\right)$ &
			$\left(\frac{1}{64},\frac{1}{8}\right)$ &
			$\left(\frac{3}{2^7},\frac{3}{2^7}\right)$ &
			$\left(\frac{5}{2^8},\frac{13}{2^8}\right)$ \\
		\hline
			$n_iX_i$ &
			$1A$ &
			$2B$ &
			$2A$ &
			$3C$ &
			$3A$ &
			$4A$ &
			$4B$ &
			$5A$ &
			$6A$ \\
		\hline
			$\op{dim}$ &
			$1$ &
			$2$ &
			$3$ &
			$3$ &
			$4$ &
			$5$ &
			$5$ &
			$6$ &
			$8$ \\
		\hline
	\end{tabular}
	\end{center}
\end{lemma}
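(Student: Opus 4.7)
The plan is to invoke the universal property of $U$. By Theorem \ref{universal axial}, every $2$-generated Frobenius $\frak V(4,3)$-axial algebra is a quotient of $U$. Each of the nine Norton-Sakuma algebras $n_iX_i$, first described in \cite{conway} and verified as a Frobenius $\frak V(4,3)$-axial algebra in \cite{ipss}, is such, so there is a canonical surjection $\phi_i\colon U\twoheadrightarrow n_iX_i$. The lemma's assertion is that the induced values $(\phi_i(\lm),\phi_i(\mu))=(\la a_0,a_1\ra,\la a_0,a_2\ra)_{n_iX_i}$ agree with the tabulated $(\lm_i,\mu_i)$. Once this is verified, $\phi_i$ automatically sends the relators $\lm-\lm_i$ and $\mu-\mu_i$ to zero, and hence factors through $U_i=M(\{\lm-\lm_i,\mu-\mu_i\},\emptyset)$, yielding the claim.

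The identification of inner products is carried out case by case using the explicit bases and multiplications in \cite{conway}. Several cases collapse to $a_2=a_0$ and hence force $\mu=1$: the trivial case $1A$ with $a_0=a_1$; the case $2B$ with $a_0a_1=0$; and the case $2A$ with $a_0a_1=\frac{1}{8}(a_0+a_1-a_p)$ for an auxiliary axis $a_p$, where diagonalising $\ad(a_1)$ on the $3$-dimensional algebra shows that the $\frac{1}{32}$-eigenspace vanishes, so $\tau_1$ acts as the identity and $a_2=a_0^{\tau_0\tau_1}=a_0$. The corresponding $\lm$-values $1, 0, \frac{1}{8}$ are immediate from the product $a_0a_1$. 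For $3C$, the example in Section \ref{sec-ax} gives $\lm=\frac{1}{64}$, and the $\Sym(3)$-symmetry of the algebra yields $\mu=\frac{1}{64}$ as well. The remaining cases $3A, 4A, 4B, 5A, 6A$ are analogous: each algebra has an explicit finite-dimensional presentation from which $\la a_0,a_1\ra$ and $\la a_0,a_2\ra$ can be read off directly.

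The tabulated dimensions are the standard Norton-Sakuma dimensions recorded in \cite{conway}. The main obstacle is only the case-by-case verification of nine inner-product pairs in nine explicit finite-dimensional algebras, which is tedious but conceptually routine; no deep new idea is required at this stage, as the universal property of $U$ does all the conceptual work. The complementary and deeper content—that these nine pairs exhaust the common zero-locus of $p_1'$ and $p_2'$, so that no further quotients of $U$ can arise—has already been established in Lemma \ref{lem-ideal}.
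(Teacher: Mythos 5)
Your proof takes essentially the same route as the paper: invoke Theorem \ref{universal axial} to obtain a surjection $U\twoheadrightarrow n_iX_i$ for each Norton-Sakuma algebra (citing \cite{ipss} for the fact that these are $2$-generated Frobenius $\frak V(4,3)$-axial algebras), then check that the induced values of $\lm$ and $\mu$ match the table so that the surjection factors through $U_i$. The paper dispatches the matching step by reference to the inner-product data on page 529 of \cite{conway} (after rescaling the form to its normalisation), while you instead verify the first several cases directly from the structure constants; both are fine, and your explicit reasoning that $a_2=a_0$ forces $\mu=1$ in $1A$, $2B$, $2A$ and the use of the $\Sym(3)$-symmetry for $3C$ is a correct and useful way to make the ``easy to see'' step concrete.
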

\begin{proof}
	Firstly, the Norton-Sakuma algebras are known
	to be Majorana algebras \cite{ipss},
	and hence they are also Frobenius $\frak V(4,3)$-axial algebras.
	Since they are $2$-generated,
	we know that they are quotients of our universal object $U$.

	Secondly, after rescaling the form to match \cite{conway},
	it is easy to see that the values of $\lm$ and $\mu$ in $U_i$
	are the same as those given for the $2$-generated subalgebras $n_iX_i$ on page 529.
	(There is no need to rescale to match \cite{ipss}.)
	This means exactly that $n_iX_i$ is a quotient of the algebra $U_i$.
\end{proof}

In particular, since every $U_i$ has a nontrivial quotient,
the $U_i$ themselves are nontrivial.

\begin{corollary}
	\label{cor q9}
	The ring $R = R_0 = \QQ[\lm,\mu]$ is isomorphic to $\QQ^9$.
\end{corollary}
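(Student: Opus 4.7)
The plan is to combine the structural result of Lemma \ref{lem-ideal} (that $R'/I \cong \QQ^9$) with the existence statement of Lemma \ref{lem ns quots} (that each of the nine Norton-Sakuma algebras is a nontrivial quotient of the corresponding evaluation $U_i$). The strategy exploits the fact that $\QQ^9$ is semisimple: every quotient of $\QQ^9$ is of the form $\QQ^k$ for some $k\leq 9$, obtained by deleting some of the factors, so the problem reduces to showing that none of the nine maximal ideals collapses.

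The first step is to observe that the surjection $R'\twoheadrightarrow R$ sending $\lm'\mapsto\lm$ and $\mu'\mapsto\mu$ factors through $R'/I$. Indeed, by Lemma \ref{lem-assoc-pol} the polynomials $p_1,p_2$ vanish in $R$, so the ideal $I=(p_1',p_2')$ lies in the kernel. Combined with Lemma \ref{lem-ideal}, this exhibits $R$ as a quotient of $\QQ^9$, and hence as $\QQ^k$ for some $k\leq 9$, where each surviving factor is indexed by one of the nine points $(\lm_i,\mu_i)\in\QQ^2$ listed in Lemma \ref{lem-ideal}.

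The second step is to show that all nine points survive. For each $i$, consider the evaluation $U_i=M(\{\lm-\lm_i,\mu-\mu_i\},\emptyset)$ of $U$, whose coefficient ring is $R/(\lm-\lm_i,\mu-\mu_i)$. If this coefficient ring were zero, then $1\in(\lm-\lm_i,\mu-\mu_i)$ in $R$, which in $U_i$ would force the identity of the coefficient ring to act as zero, trivialising $U_i$. But Lemma \ref{lem ns quots} tells us that the Norton-Sakuma algebra $n_iX_i$ is a nontrivial quotient of $U_i$, so $U_i$ itself is nontrivial, and hence the coefficient quotient equals $\QQ$. This shows that the maximal ideal $(\lm-\lm_i,\mu-\mu_i)$ is proper in $R$ for every $i$, so the factor corresponding to the $i$th point is not killed under the projection $R'/I\twoheadrightarrow R$.

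Combining the two steps, $R$ is a quotient of $\QQ^9$ in which none of the nine factors vanishes, forcing $R\cong\QQ^9$. The argument is essentially bookkeeping; the only subtle point is the implication ``$U_i$ nontrivial implies its coefficient ring is nontrivial,'' which follows directly from the construction of $M(U,V)$ in Section \ref{sec-uni-alg}, since the coefficient ring is an $R/J$-module and the algebra is an $R/J$-module, so $R/J=0$ forces $U_i=0$.
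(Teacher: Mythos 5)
Your proof is correct and follows essentially the same route as the paper: exhibit $R$ as a quotient of $R'/I\cong\QQ^9$ and then use the nontriviality of each $U_i$ (granted by Lemma \ref{lem ns quots}) to show none of the nine factors is killed. You simply make explicit the step that the paper states as ``every point $(\lm_i,\mu_i)$ in $R'/I$ is realised by $R$,'' spelling out that a trivial coefficient ring would trivialise the algebra $U_i$.
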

\begin{proof}
	Evidently $R$ is a quotient of $R'/I$,
	the polynomial ring in two formal variables $\lm',\mu'$
	quotiented by the ideal $I$ of Lemma \ref{lem-ideal}.
	Hence $R$ is a quotient of $\QQ^9$.
	On the other hand, since all nine Norton-Sakuma algebras
	are realised as quotients of $U$,
	every point $(\lm_i,\mu_i)$ in $R'/I$ is realised by $R$,
	and therefore $R$ is $9$-dimensional,
	that is, $R \cong \QQ^9$.
\end{proof}

\begin{corollary}
	\label{cor u direct}
	The algebra $U$ is the direct sum of $U_i$.
\end{corollary}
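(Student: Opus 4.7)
The plan is to decompose $U$ using the idempotent structure of its coefficient ring $R$, which by Corollary \ref{cor q9} is isomorphic to $\QQ^9$. Under this isomorphism, the nine points $(\lm_i,\mu_i)$ of Lemma \ref{lem-ideal} correspond to nine primitive orthogonal idempotents $e_1,\ldots,e_9\in R$ satisfying $\sum_i e_i = 1$, $e_ie_j=\dl_{ij}e_i$, and (crucially) $\lm e_i = \lm_i e_i$ and $\mu e_i = \mu_i e_i$. These give an $R$-module decomposition $U = \bigoplus_{i=1}^9 e_i U$.

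First I would check that this is an algebra and Frobenius-form decomposition, not merely a module one. Since $R$ acts on $U$ through scalars and the algebra product is $R$-bilinear, we get $(e_ix)(e_jy) = e_ie_j(xy) = 0$ for $i\neq j$, so the $e_iU$ are mutually annihilating ideals. Because the bilinear form on $U$ is $R$-bilinear as well (that is, $\la rx,y\ra=r\la x,y\ra$ for $r\in R$, which follows from the construction in Section \ref{sec-uni-alg} together with associativity), we also obtain $\la e_ix,e_jy\ra = e_ie_j\la x,y\ra = 0$ for $i\neq j$, so the form splits too. Hence $U = \bigoplus e_iU$ as Frobenius algebras.

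Next I would identify $e_iU$ with $U_i$. On $e_iU$ the element $\lm$ acts by $\lm_i$ and $\mu$ acts by $\mu_i$, and its coefficient ring is $e_iR\cong\QQ$. It is generated by the two axes $e_ia_0$ and $e_ia_1$ (since the $a_j$ generate $U$ as an $R$-algebra and $e_i$ is central), and these axes remain $\frak V(4,3)$-axes in $e_iU$ because axis properties are preserved under quotients. By the universal property of Proposition \ref{presentation} applied to $U_i = M(\{\lm-\lm_i,\mu-\mu_i\},\emptyset)$ (the evaluation at $(\lm_i,\mu_i)$), there is a surjection $U_i \twoheadrightarrow e_iU$. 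Conversely, the canonical map $U\to U_i$ sends $1-e_i\in R$ to $0$, because $1-e_i$ lies in the maximal ideal $(\lm-\lm_i,\mu-\mu_i)$ of $R$; hence it kills $(1-e_i)U$, so it factors as $U\twoheadrightarrow e_iU\to U_i$. Tracking marked generators through these two compositions shows the maps are mutually inverse isomorphisms.

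The main obstacle is simply verifying that the $R$-bilinearity of the form is strong enough to split the form over the idempotent decomposition, and that $e_iU$ genuinely sits inside the category $\cal M$ of Frobenius $\frak V(4,3)$-axial algebras on two generators (rather than being only some further quotient). Once both points are in hand, combining the two paragraphs gives $U = \bigoplus e_iU = \bigoplus U_i$, as claimed.
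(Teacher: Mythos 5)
Your proof is correct and follows essentially the same strategy as the paper: decompose $1\in R\cong\QQ^9$ into the nine primitive orthogonal idempotents $e_i$, observe that $e_iU$ is the image of $U$ under the evaluation at $(\lm_i,\mu_i)$, and conclude $U=\bigoplus e_iU=\bigoplus U_i$. Where the paper is terse (it simply asserts that ``multiplying by $e_i$ corresponds to an evaluation mapping $U\to U_i$'' and then checks the intersection is trivial), you supply the details: that the product and form split because the $e_i$ are central orthogonal idempotents and the form is $R$-bilinear, and that $e_iU\cong U_i$ via the universal property of $M(\{\lm-\lm_i,\mu-\mu_i\},\emptyset)$ together with the observation that $1-e_i\in(\lm-\lm_i,\mu-\mu_i)$ so the canonical map $U\to U_i$ factors through $e_iU$. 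These are exactly the points the paper leaves implicit, and your filling them in is sound.
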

\begin{proof}
	By Corollary \ref{cor q9},
	$R$ has an identity, denoted $1$,
	which decomposes as a direct sum of nine primitive pairwise annihilating idempotents $e_1,\dotsc,e_9$.
	Observe that multiplying by $e_i$
	corresponds to an evaluation mapping $U\to U_i$
	(where we may choose the labeling of the $e_i$
	so that indeed $U_i$ is the image of $\ad(e_i)$).
	Then set $R_i = Re_i$, so that $U_i = R_iW$,
	and as $1 = \sum_i e_i$,
	we have $R = \sum_i R_i$ and $U=\sum_i U_i$.
	Now $R = \oplus_iR_i$ and each $R_i\cong\QQ$.
	Suppose that $x\in U_i\cap(\sum_{j\neq i}U_j)$.
	Then $x = 1x = \sum_k e_kx$;
	as $x\in U_i$, $e_kx = 0$ for all $k\neq i$,
	and as $x\in \sum_{j\neq i}U_j$, also $e_ix=0$.
	Therefore $x = \sum_k 0 = 0$,
	and the intersection is trivial.
	Therefore $U = \oplus_i U_i$.
\end{proof}

\begin{lemma}
	\label{lem exactly ns}
	Each quotient algebra $U_i$ is isomorphic
	to the corresponding Norton-Sakuma algebra $n_iX_i$.
\end{lemma}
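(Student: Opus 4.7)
The plan is a case-by-case verification that each surjection $\phi_i\colon U_i\twoheadrightarrow n_iX_i$ of Lemma \ref{lem ns quots} is injective. Writing $d_i$ for $\dim n_iX_i$, the surjection gives $\dim U_i\geq d_i$, while $U_i=R_iW$ with $R_i\cong\QQ$ and $|W|=8$ gives $\dim U_i\leq 8$. This settles the case $6A$ immediately, where $d_{6A}=8$.

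For each of the remaining eight cases, one must exhibit $8-d_i$ linear dependencies among the elements of $W$ inside $U_i$. Such dependencies are certainly visible in the quotient $n_iX_i$, where $W$ spans a space of dimension $d_i<8$; the task is to verify they already hold upstairs in $U_i$. Typical dependencies are collapses such as $a_k=a_j$ (for example $a_{\pm 2}=a_0$ and $a_{-1}=a_1$ in $1A$, or $a_2=a_0$ in $2A$), and expressions of $\sg_1$, $\sg_2^e$, $\sg_2^o$ as $\QQ$-linear combinations of the $a_k$'s (for example $\sg_1=-\tfrac{1}{32}(a_0+a_1)$ when $\lm=0$ in $2B$, which follows because $a_1$ then lies in the $0$-eigenspace of $a_0$). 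Each such dependency is derived by specializing at $(\lm_i,\mu_i)$ the polynomial formulas of Section \ref{sec-cal}---in particular Lemma \ref{and a3} expressing $a_3$ in terms of $W$, the analogous expression for $a_4$ obtained by the flip, and the product formulas for $\sg_1\sg_2^e$ and $\sg_2^e\sg_2^e$ from Lemma \ref{lem mult closed}---and then propagated via the $T$-action on $U_i$.

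Once sufficient dependencies are in hand in each case, one directly compares the induced multiplication table and form values on the reduced spanning set with those of $n_iX_i$ recorded in \cite{conway}; the multiplication being already forced by the ingredients of Section \ref{sec-cal} specialised to $(\lm_i,\mu_i)$, this comparison collapses to an equality of structure constants, yielding the required isomorphism. The main obstacle is the uniform organisation of the nine cases: each substitution $(\lm_i,\mu_i)$ produces a distinct collapse pattern, and the routine but extensive bookkeeping is best handled by computer algebra, as was already done for other calculations in the preceding section.
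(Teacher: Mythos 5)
Your plan has the right shape: bound $\dim U_i$ above and below and force equality, with the lower bound $d_i=\dim n_iX_i$ coming from the surjection of Lemma \ref{lem ns quots}, and the upper bound $8$ settling $6A$ outright. The gap is in how you propose to obtain the $8-d_i$ relations that cut $U_i$ down from the free $8$-dimensional module $R_iW$. Specializing the Section \ref{sec-cal} formulas at $(\lm_i,\mu_i)$ produces no such relations: it merely yields the free $8$-dimensional $\QQ$-algebra $A_i$ of which $U_i$ is a further quotient, and the defining relations of that quotient still have to be located. Your $2B$ example is also unsound: $\lm=\la a_0,a_1\ra=0$ places $a_1$ in $a_0^\perp = A_{\{0,1/4,1/32\}}$, not in the $0$-eigenspace $A_0$ alone; without the positive-definite form that this paper deliberately drops, you cannot kill the $\tfrac14$- and $\tfrac{1}{32}$-components of $a_1$ for free.

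The mechanism the paper actually uses, and which your proposal only gestures at with ``propagated via the $T$-action'', is that the $T$-action is the \emph{source} of the relations, not just a tool to spread them around. The products $\sg_1\sg_2^e$ and $\sg_2^e\sg_2^e$ were obtained by resurrection relative to the particular axis $a_0$, and nothing a priori forces the resulting polynomial formulas to be $T'$-equivariant; but in $U_i$, since $T$ must act by (semi)automorphisms, the discrepancies $\psi_i(x)\psi_i(y)-(\psi_i(x)^{\psi_i(t)}\psi_i(y)^{\psi_i(t)})^{\psi_i(t^{-1})}$ must vanish. The paper collects these into an ideal $I_i\normal A_i$, computes $\dim I_i=7,6,5,5,4,3,3,2,0$ by machine, and reads off $\dim U_i\le 8-\dim I_i=d_i$. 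If you make this $T$-equivariance constraint the explicit source of your case-by-case dependencies, your approach becomes the paper's argument unpacked; as written, it asserts the dependencies without supplying a valid reason they hold in $U_i$ rather than merely in $n_iX_i$.
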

\begin{proof}
	Section \ref{sec-cal} gave us enough information to construct a new algebra, $A$,
	which will help us in this proof.
	Set $W' = \{a_{-2}',a_{-1}',a_0',a_1',a_2',\sg_1',{\sg_2^e}',{\sg_2^o}'\}$,
	and $A$ to be the free $R'$-module with basis $W'$
	and algebra products defined by our results in Section \ref{sec-cal}
	(replacing, in all instances, coefficients in $R$ with coefficients in $R'$
	and elements of $W$ with elements of $W'$).
	Clearly $U$ is a quotient of $A$.
	Furthermore, just as we have a group $T$ of automorphisms
	acting on $U$, generated by $\tau_0$ and the flip,
	we have a group $T'$ acting on $A$
	which is generated by the two automorphisms
	\begin{gather}
		\label{eq t'}
		a_i'\mapsto a_{-i}',\quad \sg_1',{\sg_2^e}',{\sg_2^o}'\text{ fixed; } \\
		a_i'\mapsto a_{1-i}',\quad \sg_1'\mapsto\sg_1',{\sg_2^e}'\mapsto{\sg_2^o}',{\sg_2^o}'\mapsto{\sg_2^e}'.
	\end{gather}

	To verify that $U_i$ indeed is its corresponding Norton-Sakuma algebra,
	we do the following.
	We first apply the evaluation mapping $(\lm',\mu')\mapsto(\lm_i,\mu_i)$
	to the structure constants of $A$,
	to produce an $8$-dimensional algebra $A_i$ over $\QQ$.
	Note $U_i$ is a quotient of $A_i$.
	Set $\psi_i$ to be the map $A\mapsto A_i$ and
	\begin{equation}
		I_i = ( \psi_i(x)\psi_i(y) - (\psi_i(x)^{\psi_i(t)}\psi_i(y)^{\psi_i(t)})^{\psi_i(t^{-1})}\mid t\in T',x,y\in W' ),
	\end{equation}
	where we also extend the action of $\psi_i$ to $T'$
	by the same evaluation of entries of the matrix representation of $T'$ on $A$.
	We know that $I_i$ must be in the kernel of the map $A_i\mapsto U_i$.
	We used \cite{gap} to calculate that the dimension of $I_i$ over $\QQ$ is
	$7,6,5,5,4,3,3,2,0$ respectively,
	in the same order as recorded in Lemma \ref{lem ns quots}.
	Therefore we find that the dimension of $U_i$
	is at most $1,2,3,3,4,5,5,6,8$ respectively.
	But these are exactly the dimensions of the Norton-Sakuma algebras $n_iX_i$,
	and hence the upper bound meets the lower bound
	to confirm that indeed the $U_i$ are the Norton-Sakuma algebras.
\end{proof}

\begin{theorem}
	\label{thm-sakuma}
	The universal $2$-generated Frobenius $\frak V(4,3)$-axial algebra $U$
  is isomorphic to the direct sum of the Norton-Sakuma algebras over $\QQ$.
	\qed
\end{theorem}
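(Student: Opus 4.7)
The plan is to assemble the theorem from the results already established in this section; the genuine content was in the calculations of Section~\ref{sec-cal} and the scheme-theoretic bookkeeping in Lemma~\ref{lem-ideal}, so what remains is essentially a packaging step.

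First, Theorem~\ref{thm-alg-determined} tells us that $U = R_0 W$ with the explicit $8$-element spanning set $W$ and coefficient ring $R_0 = \QQ[\lm,\mu]$, so to describe $U$ it suffices to understand $R_0$ and how the spanning set interacts with the idempotent decomposition of $R_0$. Corollary~\ref{cor q9} handles the first task: the associativity relations $p_1 = p_2 = 0$ of Lemma~\ref{lem-assoc-pol} force $R_0$ to be a quotient of $R'/I = \QQ[\lm',\mu']/(p_1',p_2')$, which by Lemma~\ref{lem-ideal} is $\QQ^9$, supported at the nine explicit points $(\lm_i,\mu_i)$; Lemma~\ref{lem ns quots} shows each such point is genuinely realised by a nontrivial quotient of $U$ (via the corresponding Norton-Sakuma algebra), so no point is collapsed and one gets $R_0 \cong \QQ^9$ exactly.

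Next, I would invoke Corollary~\ref{cor u direct} to decompose $U = \bigoplus_{i=1}^{9} U_i$ along the primitive idempotents $e_1,\dotsc,e_9$ of $R_0$, where $U_i = M(\{\lm - \lm_i, \mu - \mu_i\}, \emptyset)$ is the evaluation of $U$ at the $i$-th solution point. Finally, Lemma~\ref{lem exactly ns} identifies each $U_i$ with the corresponding Norton-Sakuma algebra $n_iX_i$: each $n_iX_i$ is a quotient of $U_i$ by Lemma~\ref{lem ns quots} (giving a lower bound on $\dim U_i$), and the explicit $T'$-invariance relations imposed on the ambient $8$-dimensional algebra $A_i$ over $\QQ$ produce a matching upper bound. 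Concatenating the direct-sum decomposition with this identification yields the theorem.

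If I were carrying out the argument from scratch, the genuine obstacle is the upper bound in Lemma~\ref{lem exactly ns}: the lower bound from realising each $n_iX_i$ as a quotient is essentially formal, but pinning $\dim U_i$ from above requires extracting enough consequences of the axioms. The approach taken---building the ambient $8$-dimensional algebra $A_i$ over $\QQ$, adjoining the relations forced by commutativity with the dihedral group $T'$ acting via \eqref{eq t'}, and checking on a computer algebra system that the resulting quotients have the predicted dimensions $1,2,3,3,4,5,5,6,8$---appears essentially unavoidable and is the computational heart of the statement.
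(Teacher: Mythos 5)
Your proposal is correct and matches the paper's approach exactly: the theorem carries a bare \qed because it is an immediate consequence of Corollary~\ref{cor u direct} (the decomposition $U = \bigoplus_i U_i$) together with Lemma~\ref{lem exactly ns} (each $U_i \cong n_iX_i$), which is precisely the assembly you describe. Your observation that the genuine computational content lies in the upper bound of Lemma~\ref{lem exactly ns} is also accurate.
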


\clearpage

\end{document}